\def\NZQ{\mathbb}
\def\QQ{{\NZQ Q}}
\def\frk{\mathfrak}               
\def\frm{{\frk m}}
\def\bbN{{\mathbf N}}
\def\bbZ{{\mathbf Z}}
\def\opn#1#2{\def#1{\operatorname{#2}}} 
\opn\chara{char} \opn\length{\ell} \opn\pd{pd} \opn\rk{rk}
\opn\projdim{proj\,dim} \opn\injdim{inj\,dim} \opn\rank{rank}
\opn\depth{depth} \opn\codepth{codepth} \opn\grade{grade}
\opn\height{height} \opn\embdim{emb\,dim} \opn\codim{codim}
\opn\Tr{Tr} \opn\bigrank{big\,rank}
\opn\superheight{superheight}\opn\lcm{lcm}
\opn\trdeg{tr\,deg}%
\opn\reg{reg} \opn\lreg{lreg} \opn\skel{skel} \opn\Gr{Gr}
\opn\dim{dim} \opn\indeg{indeg} \opn\Ass{Ass} \opn\Min{Min}
\opn\div{div} \opn\Div{Div} \opn\cl{cl} \opn\Cl{Cl}
\opn\Spec{Spec} \opn\Supp{Supp} \opn\supp{supp} \opn\Sing{Sing}
\opn\Ass{Ass}
\opn\Ann{Ann} \opn\Rad{Rad} \opn\Soc{Soc}
\opn\Sym{Sym} \opn\Ker{Ker} \opn\Coker{Coker} \opn\Im{Im}
\opn\Hom{Hom} \opn\Tor{Tor} \opn\Ext{Ext} \opn\End{End}
\opn\Aut{Aut} \opn\id{id} \opn\ini{in} \opn\tr{tr}
\opn\nat{nat}\opn\it{it}
\opn\pff{proof}
\opn\Pf{proof} \opn\GL{GL} \opn\SL{SL} \opn\mod{mod} \opn\ord{ord}
\opn\diam{diam}
\opn\aff{aff} \opn\con{conv} \opn\relint{relint} \opn\st{st}
\opn\lk{lk} \opn\cn{cn} \opn\core{core} \opn\vol{vol}
\opn\link{link} \opn\star{star} \opn\skel{skel}
\opn\gr{gr}
\def\pot#1#2{#1[\kern-0.28ex[#2]\kern-0.28ex]}
\opn\dirlim{\underrightarrow{\lim}}
\opn\inivlim{\underleftarrow{\lim}}
\let\to=\rightarrow
\def\Implies{\ifmmode\Longrightarrow \else
     \unskip${}\Longrightarrow{}$\ignorespaces\fi}
\def\implies{\ifmmode\Rightarrow \else
     \unskip${}\Rightarrow{}$\ignorespaces\fi}
\def\iff{\ifmmode\Longleftrightarrow \else
     \unskip${}\Longleftrightarrow{}$\ignorespaces\fi}
\theoremstyle{plain}
\newtheorem{thm}{Theorem}[section]
\newtheorem{lemma}[thm]{Lemma}
\newtheorem{prop}[thm]{Proposition}
\newtheorem{cor}[thm]{Corollary}
\newtheorem{quest}[thm]{Question}
\newtheorem*{thm-q}{Theorem}
\newtheorem*{cor-q}{Corollary}
\newtheorem*{quest-q}{Question}
\newtheorem*{quests-q}{Questions}
\theoremstyle{definition}
\newtheorem{defn}[thm]{Definition}
\newtheorem{exam}[thm]{Example}
\newtheorem*{acknowledgement}{Ackowledgement}
\theoremstyle{remark}
\newtheorem{remark}[thm]{Remark}
\let\epsilon\varepsilon
\let\phi=\varphi
\let\kappa=\varkappa
\def\qed{\ifhmode\textqed\fi
   \ifmmode\ifinner\quad\qedsymbol\else\dispqed\fi\fi}
\def\textqed{\unskip\nobreak\penalty50
    \hskip2em\hbox{}\nobreak\hfil\qedsymbol
    \parfillskip=0pt \finalhyphendemerits=0}
\def\dispqed{\rlap{\qquad\qedsymbol}}
\opn\Gin{Gin}
\def\FF{{\mathcal F}}
\opn\inii{in} \opn\inim{inm} \opn\rate{rate}
\numberwithin{equation}{section}
\begin{document}
\title{Cohen--Macaulaynees for symbolic power ideals of edge ideals}
\author[Giancarlo Rinaldo]{Giancarlo Rinaldo}
\address[Giancarlo Rinaldo]{Dipartimento di Matematica, 
Universita' di Messina, Salita Sperone, 31. S. Agata, 
Messina 98166, Italy}
\email{rinaldo@dipmat.unime.it}
\author[Naoki Terai]{Naoki Terai}
\address[Naoki Terai]{Department of Mathematics, Faculty of Culture 
and Education, Saga University, Saga 840--8502, Japan}
\email{terai@cc.saga-u.ac.jp}
\author[Ken-ichi Yoshida]{Ken-ichi Yoshida}
\address[Ken-ichi Yoshida]{Graduate School of Mathematics, Nagoya University, 
         Nagoya 464--8602, Japan}
\email{yoshida@math.nagoya-u.ac.jp}
\subjclass[2000]{Primary 13F55, Secondary 13H10}
\date{\today}
\keywords{edge ideal, complete intersection, 
Cohen--Macaulay, FLC, symbolic powers, polarization, simplicial complex}

\begin{abstract}
Let $S = K[x_1, \ldots, x_n]$ be a polynomial ring over a field $K$.
Let $I(G)\subseteq S$ denote the edge ideal of a graph $G$.  
We show that the $\ell$th symbolic power 
$I(G)^{(\ell)}$ is a Cohen--Macaulay ideal 
(i.e., $S/I(G)^{(\ell)}$ is Cohen--Macaulay) 
for some integer $\ell \ge 3$ if and only if 
$G$ is a disjoint union of finitely many complete graphs.  
When this is the case, all the symbolic powers $I(G)^{(\ell)}$ 
are Cohen--Macaulay ideals. 
Similarly, we characterize graphs $G$ for which 
$S/I(G)^{(\ell)}$ has (FLC). 
\par 
As an application, we show that an edge ideal $I(G)$ 
is complete intersection provided that $S/I(G)^{\ell}$ is 
Cohen--Macaulay for some integer $\ell \ge 3$. 
This strengthens the main theorem in \cite{CRTY}.  
\end{abstract}

\maketitle
\setcounter{section}{-1}
\section{Introduction}

\par 
In this paper, we restrict our attention to the edge ideals of graphs. 
For a graph $G=(V(G),E(G))$, the edge ideal, denoted by $I(G)$, is defined by 
\[
 I(G) = (x_ix_j \; :\; \{x_i,x_j\} \in E(G))S,  
\]
where $S = K[v : v \in V(G)] = K[x_1, \ldots, x_n]$ is 
a polynomial ring over a field $K$. 
Then $E(G)$ is a squarefree monomial ideal which is generated 
by degree $2$ elements, and thus it can be regarded as a Stanley--Reisner ideal 
and it is a radical ideal.   
Then the following theorem is well-known. 

\begin{thm-q}[See \cite{AV,CN, Wa}] \label{CowsikNori}
Let $S$ be a regular local ring $($resp., a polynomial ring over a field $K$$)$,   
and let $I$ be a radical ideal $($resp., a homogeneous radical ideal$)$ of $S$. 
Then $I$ is complete intersection if and only if $S/I^{\ell}$ is 
Cohen--Macaulay for infinitely many $\ell \ge 1$. 
 \par 
In particular, for any edge ideal $I(G)$ of a graph $G$, 
$I(G)$ is a complete intersection ideal if and only if  
$S/I(G)^{\ell}$ is 
Cohen-Macaulay for infinitely many $\ell \ge 1$. 
\end{thm-q}

\par  
In what follows, let $G=(V(G),E(G))$ be a graph, and 
$I(G) \subseteq S=K[v \,:\, v\in V(G)]$ the edge ideal of $G$. 

\par 
Recently, in \cite{TY}, the last two authors gave a generalization of 
the theorem using a classification theorem for locally complete intersection 
Stanley--Reinser ideals; see \cite[Theorem 1.15]{TY}. 
Note that the following theorem is also true for Stanley--Reisner ideals. 

\begin{thm-q}[\textrm{See \cite[Theorem 2.1]{TY}}] \label{BbmTY}
If $S/I(G)^{\ell}$ is Buchsbaum for infinitely many 
$\ell \ge 1$, then $I(G)$ is complete intersection.  
\end{thm-q}

\par
Moreover, the authors \cite{CRTY} gave a refinement of 
the above theorem jointly with M. Crupi.  

\begin{thm-q}[\textrm{See \cite[Theorem 2.1]{CRTY}}] \label{CRTY}
$I(G)$ is complete intersection if and only if 
$S/I(G)^{\ell}$ is Cohen--Macaulay for some $\ell \ge \height I$. 
\end{thm-q}

\par
The main purpose of this paper is to give another variation of the theorem
in this context.  
Namely, we consider the following questions:

\begin{quests-q} \label{Q-CM}
Let $\ell \ge 1$ be an integer. 
Let $I(G)^{(\ell)}$ denote the $\ell$th symbolic power ideal of $I(G)$. 
Then$:$ 
\begin{enumerate}
\item When is $S/I(G)^{(\ell)}$ Cohen--Macaulay? 
\item Is $I(G)$ complete intersection if $S/I(G)^{\ell}$ Cohen--Macaulay 
for a fixed $\ell \ge 1$? 
\end{enumerate}
\end{quests-q}

\par 
The answers to these questions will give a generalization 
of the original theorem described as above. For instance,  
for each fixed $\ell \ge 1$, 
the Cohen--Macaulayness of $S/I(G)^{\ell}$ implies 
that of $S/I(G)^{(\ell)}$.  
Note that the converse is \textit{not} true in general.

\par 
We first consider the above question. 
Let $G$ be a graph on the vertex set $V=[n]$ such that 
$\dim S/I(G)=1$. 
Such a graph $G$ is isomorphic to the complete graph $K_n$. 
Then $S/I(G)^{(\ell)}$ is Cohen--Macaulay for every integer 
$\ell \ge 1$ because the symbolic power ideal 
has no embedded primes.  

\par 
The following theorem 
characterizes graphs $G$  
for which all symbolic powers $S/I(G)^{(\ell)}$ are Cohen--Macaulay
(or for $\ell \ge 3$). 

\begin{thm-q}[See Theorem \ref{Main-CM}] \label{Main1}  
The following conditions are equivalent$:$ 
\begin{enumerate}
 \item $S/I(G)^{(\ell)}$ is Cohen--Macaulay for every integer $\ell \ge 1$. 
 \item $S/I(G)^{(\ell)}$ is Cohen--Macaulay for some $\ell \ge 3$. 
 \item $S/I(G)^{(\ell)}$ satisfies Serre's condition $(S_2)$ for some $\ell \ge 3$. 
 \item $G$ is a disjoint union of finitely many complete graphs. 
\end{enumerate}  
\end{thm-q}

\par \vspace{2mm}
As an application of the theorem, we can obtain some result 
for Cohen--Macaulayness of ordinary powers, 
which gives an improvement of the main theorem 
in \cite{CRTY}. 

\begin{cor-q}[See Theorem \ref{Power-cor}] \label{Power-cor-q}
If $S/I(G)^{\ell}$ is Cohen--Macaulay 
for some $\ell \ge 3$, 
then $I(G)$ is complete intersection. 
\end{cor-q}

\par \vspace{2mm}
Next, we consider the following question. 
We need to assume that $I(G)$ is unmixed. 
Then if $\dim S/I(G) \le 2$,    
for every integer $\ell \ge 1$, 
$S/I(G)^{(\ell)}$ is unmixed, and thus it has (FLC). 

\begin{quest-q} \label{Q-FLC} 
Let $\ell \ge 1$ be an integer. 
When does $S/I(G)^{(\ell)}$ have $($FLC$)$?
\end{quest-q} 

\par 
Let $\Delta=\Delta_{n_1,\ldots,n_r}$ denote the simplicial complex 
whose Stanley--Reisner ideal is equal to 
the edge ideal of the disjoint union of 
complete graphs $K_{n_1},\ldots,K_{n_r}$. 
That is, 
\[
 I_{\Delta_{n_1,\ldots,n_r}} = 
I(K_{n_1} \textstyle{\coprod} \cdots \textstyle{\coprod} K_{n_r}).
\]  
\par 
Then the following theorem gives an answer to the above question 
for $\ell \ge 3$:

\begin{thm-q}[See Theorem \ref{Main-FLC}] \label{Main2} 
Let $\Delta(G)$ be the simplicial complex on $V(G)$ which satisfies 
$I_{\Delta(G)} = I(G)$.   
Suppose that $\Delta(G)$ is pure and $d=\dim S/I(G) \ge 3$. 
Let $p$ denote the number of connected components of $\Delta(G)$. 
Then the following conditions are equivalent$:$ 
\begin{enumerate}
 \item $S/I(G)^{(\ell)}$ has $($FLC$)$ for every integer $\ell \ge 1$. 
 \item $S/I(G)^{(\ell)}$ has $($FLC$)$ for some $\ell \ge 3$. 
 \item There exist  
$(n_{i1},\ldots,n_{id}) \in \bbN^d$ for every $i=1,\ldots,p$
such that 
$\Delta$ can be written as 
\[
\Delta = \Delta_{n_{11},\ldots,n_{1d}} \;\textstyle{\coprod}\; 
\Delta_{n_{21},\ldots,n_{2d}} \;\textstyle{\coprod} 
\; \ldots \;
\textstyle{\coprod} \;
\Delta_{n_{p1},\ldots,n_{pd}}.
\] 
\end{enumerate}  
\end{thm-q} 

\par 
For $\ell=2$, the problem is more complicated. 
For instance, if $G$ is a pentagon, then $I(G)$ and $I(G)^2$ are Cohen--Macaulay 
although $I(G)^{(\ell)}$ (and hence $I(G)^{\ell}$) is \textit{not} for any $\ell \ge 3$.   

\par 
After finishing this work the authors have known that N. C. Minh obtained similar results independently. 

\bigskip
\section{Preliminaries}
In this section we recall some definitions and properties that we
will use later. 

\subsection{Edge ideals}
Let $G$ be a graph, which means a simple finite graph without loops and 
multiple edges.  
Let $V(G)$ (resp., $E(G)$) denote the set of vertices (resp., edges) of $G$. 
Put $V(G) =\{x_1,\ldots, x_n\}$. 
Then the \textit{edge ideal} of $G$, denoted by $I(G)$, 
is a squarefree monomial ideal 
of $S=K[x_1,\ldots,x_n]$ defined by 
\[
 I(G) = (x_ix_j \,:\, \{x_i,x_j\} \in E(G)).
\]
\par 
A disjoint union of two graphs $G_1$ and $G_2$, denoted by 
$G_1 \coprod G_2$, is the graph $G$ which satisfies $V(G) = V(G_1) \cup V(G_2)$ and $E(G)=E(G_1) \cup E(G_2)$.   
For a nonempty subset $W \subseteq V(G)$, $H=G\,|_{W}$ denotes the graph which satisfies 
$V(H)=W$ and $E(H) = \{\{x,y\} \in E(G) \,:\, x,y \in W \}$.        

\par \vspace{2mm}
\subsection{Stanley--Reisner ideals}
Let $V=\{x_1, \ldots, x_n\}$. 
A nonempty subset $\Delta$ of the power set $2^V$ 
is called a \textit{simplicial complex} on $V$ if 
$\{v\} \in \Delta$ for all $v \in V$, 
and $F \in \Delta$, $H \subseteq F$ imply $H \in \Delta$. 
An element $F \in \Delta$ is called a \textit{face} of $\Delta$.  
The dimension of $\Delta$ is defined by 
$\dim \Delta = \max\{\sharp(F)-1 \,:\, \text{$F$ is a face of $\Delta$}\}$. 
A maximal face of $\Delta$ is called a \textit{facet} of $\Delta$. 
$\mathcal{F}(\Delta)$ denote the set of all facets of $\Delta$. 
The \textit{Stanley--Reisner ideal} of $\Delta$, denoted by $I_{\Delta}$,   
is the squarefree monomial ideal generated by 
\[
 \{x_{i_1} x_{i_2} \cdots x_{i_p} \,:\, 1 \le i_1 < \cdots < i_p \le n,\; 
\{x_{i_1},\ldots,x_{i_p}\} \notin \Delta \},  
\]
and $K[\Delta]= K[x_1,\ldots,x_n]/I_{\Delta}$ is called 
the \textit{Stanley--Reisner ring} of $\Delta$. 

\par 
For an arbitrary graph $G$, the simplicial complex $\Delta(G)$ 
with $I(G) = I_{\Delta(G)}$ is called 
the \textit{complementary simplicial complex} of $G$.

\par
Put $d=\dim \Delta+1$.  
A simplicial complex $\Delta$ is called \textit{pure}  
if all the facets of $\Delta$ have the same cardinality $d$. 
A pure simplicial complex $\Delta$ is \textit{connected in codimension $1$}
(or \textit{strongly connected}) if for every two facets $F$ and $H$ of
$\Delta$, there is a sequence of facets 
$F = F_0, F_1, \ldots, F_m =H$ 
such that $\sharp (F_i \cap F_{i+1}) = d-1$ for each $i =0,\ldots, m-1$.
For every face $F \in \Delta$, the $\textit{star}$ and the
\textit{link} of $F$ are defined by:
\begin{eqnarray*}
\star_{\Delta} F &=& \{H \in \Delta \;:\,H \cup F \in \Delta\}, \\
\link_{\Delta} F  &=& \{H \in \Delta\;:\,H \cup F \in \Delta,\, H \cap F = \emptyset\}.
\end{eqnarray*}
Note that these are also simplicial complexes. 

\par \vspace{2mm}
\subsection{Serre's condition}
Let $S=K[x_1,\ldots,x_n]$ and $\frm = (x_1,\ldots,x_n)S$. 
Let $I$ be a homogeneous ideal of $S$. 
For a positive integer $k$,   
$S/I$ satisfies \textit{Serre's condition} $(S_k)$ 
if $\depth (S/I)_P \ge \min\{\dim (S/I)_P,\, k \}$ for every $P \in \Spec S/I$.
\par 
The ring $S/I$ is called \textit{Cohen--Macaulay} 
if $\depth S/I = \dim S/I$.
This is an equivalent condition that $S/I$ satisfies Serre's condition $(S_d)$, 
where $d= \dim S/I$.    
Moreover, the ring $S/I$ is called (\textit{FLC}) if 
$H_{\frm}^i(S/I)$ has finite length for every $i \ne \dim S/I$.    
The ring $S/I$ is called \textit{Buchsbaum} if 
the natural map $\Ext_S^i(S/\frm,S/I) \to H_{\frm}^i(S/I)$ is surjective 
for every $i \ne \dim S/I$.   
Note that any Cohen-Macaulay ring is Buchsbaum, and any Buchsbaum ring has (FLC).

\par 
A simplicial complex $\Delta$ is called 
\textit{Cohen--Macaulay} (resp., \textit{Buchsbaum}, \textit{FLC})
if so is $K[\Delta]$. 
Note that $\Delta$ is Buchsbaum if and only if it satisfies (FLC). 
Moreover, 
if $\Delta$ is (FLC), then $\Delta$ is pure and 
$\link_{\Delta}(F)$ is Cohen-Macaulay for every nonempty face $F \in \Delta$. 
\par 
We notice that $\Delta$ is pure and     
connected in codimension $1$ 
if $K[\Delta]$ satisfies $(S_2)$ and $\dim \Delta \ge 1$. 

\par \vspace{2mm}
\subsection{Takayama's formula}
Let $I$ be an arbitrary monomial ideal in $S=K[x_1,\ldots,x_n]$. 
Then the $i$th local cohomology module $H_{\frm}^i(S/I)$ can be regarded as a $\bbZ^n$-module over $S/I$. 
For every ${\bf a} =(a_1,\ldots,a_n) \in \bbZ^n$, we set 
$G_a = \{i \;:\; a_i < 0 \}$ and define 
\[
\Delta_{\bf a}(I) = 
\{F \subseteq [n] \;:\; \text{$F$ satisfies $(C1)$ and $(C2)$}\},  
\]
where 
\begin{enumerate}
 \item[(C1)] $F \cap G_{\bf a} = \emptyset$. 
 \item[(C2)] for every minimal generator $u=x_1^{c_1}\cdots x_n^{c_n}$ of $I$ 
there exists an index $i \notin F \cup G_{\bf a}$ with $c_i > a_i$.  
\end{enumerate}
Moreover, we define 
\[
\Delta(I) = \{F \subseteq [n] \;:\; 
\textstyle{\prod_{i \in F}} x_i  \notin \sqrt{I}\}. 
\]
Then $\Delta(I)$ is a simplicial complex and 
$\Delta_{\bf a}(I)$ is a subcomplex of $\Delta(I)$ with 
$\dim \Delta_{\bf a}(I) = \dim \Delta(I)-\sharp(G_a)$ 
provided that $\Delta(I)$ is pure and $\Delta_{\bf a}(I) \ne \emptyset$ 
similarly as in \cite[Lemma 1.3]{MiT}. 
\par \vspace{2mm}
Now let us recall Takayama's formula, which is a generalization of 
well-known Hochster's formula. 

\begin{lemma}[\textbf{Takayama's formula}; 
\textrm{see e.g. \cite[Theorem 1.1]{MiT}}] \label{Takayama}
Let $I$ be an arbitrary monomial ideal in $S=K[x_1,\ldots,x_n]$. 
For every ${\bf a} \in \bbZ^n$, we have 
\[
\dim_K H_{\frm}^i(S/I)_{\bf a} = 
\left\{
\begin{array}{ll}
\dim_K \widetilde{H}_{i-\sharp(G_{\bf a})-1} (\Delta_{\bf a}(I)), 
& \text{if $G_{\bf a} \in \Delta(I)$}, \\ 
0, & \text{else}. 
\end{array}
\right. 
\]
\end{lemma}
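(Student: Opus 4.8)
The plan is to compute $H_{\frm}^{\bullet}(S/I)$ by means of the \v{C}ech complex on $x_1,\ldots,x_n$ and to read off each $\bbZ^n$-graded strand separately. Writing $x_F=\prod_{i\in F}x_i$ for $F\subseteq[n]$, the \v{C}ech complex is
\[
C^{\bullet}:\quad 0 \to S/I \to \bigoplus_{|F|=1}(S/I)_{x_F} \to \cdots \to (S/I)_{x_{[n]}}\to 0,
\]
with $C^{p}=\bigoplus_{|F|=p}(S/I)_{x_F}$ and $H_{\frm}^{i}(S/I)=H^{i}(C^{\bullet})$. Since each localization is $\bbZ^n$-graded and the differentials preserve degree, for a fixed ${\bf a}\in\bbZ^n$ we obtain a complex $(C^{\bullet})_{\bf a}$ of finite-dimensional $K$-vector spaces whose cohomology computes $H_{\frm}^{i}(S/I)_{\bf a}$. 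First I would therefore determine exactly which summands $((S/I)_{x_F})_{\bf a}$ are nonzero.

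A direct monomial computation shows that $((S/I)_{x_F})_{\bf a}$ is at most one-dimensional, spanned by the class of $x^{\bf a}$, and is nonzero precisely when (i) $a_i\ge 0$ for all $i\notin F$, i.e. $G_{\bf a}\subseteq F$, and (ii) $x^{\bf a}\notin I\,S_{x_F}$, i.e. for every minimal generator $u=x_1^{c_1}\cdots x_n^{c_n}$ of $I$ there is an index $i\notin F$ with $c_i>a_i$. In particular the whole summand vanishes unless $x_F\notin\sqrt{I}$, that is, unless $F\in\Delta(I)$. I would next dispose of the ``else'' case: if $G_{\bf a}\notin\Delta(I)$, then any $F$ satisfying (i) contains $G_{\bf a}$ and hence, $\Delta(I)$ being a simplicial complex, cannot lie in $\Delta(I)$; thus every graded summand is zero and $H_{\frm}^{i}(S/I)_{\bf a}=0$, as claimed.

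Assume now $G_{\bf a}\in\Delta(I)$. Setting $F'=F\setminus G_{\bf a}$, condition (i) forces $G_{\bf a}\subseteq F$, so $F\mapsto F'$ gives a bijection between the nonvanishing \v{C}ech-faces $F$ and the subsets $F'\subseteq[n]\setminus G_{\bf a}$; moreover, since $F\cup G_{\bf a}=F$ in this range, condition (ii) for $F$ translates verbatim into conditions (C1) and (C2) for $F'$, so these $F'$ are exactly the faces of $\Delta_{\bf a}(I)$ (and one checks that (C2) forces $F=G_{\bf a}\cup F'\in\Delta(I)$, so the localization is indeed nonzero). Under this bijection $|F|=\sharp(G_{\bf a})+|F'|$, so the strand $(C^{\bullet})_{\bf a}$, placed in homological degree $p=|F|$, becomes --- after the shift $p\mapsto p-\sharp(G_{\bf a})-1=\dim F'$ --- the augmented (reduced) cochain complex of $\Delta_{\bf a}(I)$. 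Comparing the \v{C}ech face maps on the generators $x^{\bf a}$ with the simplicial coboundary, I would check that they agree up to the sign conventions attached to orderings of $F$, so that
\[
H^{i}\big((C^{\bullet})_{\bf a}\big)\;\cong\;\widetilde{H}^{\,i-\sharp(G_{\bf a})-1}(\Delta_{\bf a}(I)).
\]
Since over a field the reduced cohomology and reduced homology of a simplicial complex have equal $K$-dimension, taking dimensions yields the stated formula.

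The main obstacle I expect is the bookkeeping in this last identification: matching the \v{C}ech face maps with the simplicial coboundary requires fixing compatible orientations and carefully tracking the degree shift by $\sharp(G_{\bf a})$. One must also treat the boundary cases with care --- in particular the role of the empty face (reduced homology in degree $-1$) and the distinction between $\Delta_{\bf a}(I)$ being empty and containing only $\emptyset$, which is precisely what governs whether the lowest cohomology appears.
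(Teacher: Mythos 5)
Your proposal is correct, and it is essentially the standard argument: the paper itself does not prove this lemma but quotes it from the literature (\cite[Theorem 1.1]{MiT}, going back to Takayama's generalization of Hochster's formula), and the proof given there is exactly your \v{C}ech-complex computation --- isolate the degree-${\bf a}$ strand, observe that $((S/I)_{x_F})_{\bf a}\ne 0$ forces $G_{\bf a}\subseteq F$ with the complementary condition matching (C1)--(C2) for $F'=F\setminus G_{\bf a}$, and identify the strand with the reduced cochain complex of $\Delta_{\bf a}(I)$ shifted by $\sharp(G_{\bf a})+1$. All the delicate points you flag are handled correctly in your outline, including the vanishing when $G_{\bf a}\notin\Delta(I)$, the check that (C2) already forces $G_{\bf a}\cup F'\in\Delta(I)$, and the passage from reduced cohomology to reduced homology over the field $K$.
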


\par 
Using this lemma, we obtain the following criterion for Cohen-Macaulayness 
of $S/I$; see also \cite{MiT} in the case where 
$I=I_{\Delta}^{(\ell)}$ and $\dim S/I_{\Delta} = 1$. 

\begin{prop}  \label{criterion}
The following conditions are equivalent$:$
\begin{enumerate}
 \item $S/I$ is Cohen-Macaulay. 
 \item $S/I$ has $($FLC$)$, and for any ${\bf a} \in \bbN^n$, 
 we have that $\widetilde{H}_i(\Delta_{\bf a}(I))=0$ for all 
 $i < \dim \Delta_{\bf a}(I)$. 
\end{enumerate}
\end{prop}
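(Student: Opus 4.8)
The plan is to test Cohen--Macaulayness through the vanishing of the graded pieces $H_{\frm}^i(S/I)_{\mathbf a}$ computed by Takayama's formula (Lemma \ref{Takayama}), separating the degrees $\mathbf a\in\bbZ^n$ according to whether $G_{\mathbf a}=\emptyset$ (i.e.\ $\mathbf a\in\bbN^n$) or $G_{\mathbf a}\neq\emptyset$. Write $d=\dim S/I$. Since $S/I$ is Cohen--Macaulay exactly when $H_{\frm}^i(S/I)=0$ for all $i<d$, and has (FLC) exactly when each such $H_{\frm}^i(S/I)$ has finite length, it suffices to locate, degree by degree, where the lower local cohomology can survive.

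First I would dispose of the degrees with $G_{\mathbf a}\neq\emptyset$. The point is that if $a_k<0$, then replacing $a_k$ by any smaller value leaves $G_{\mathbf a}$, $\sharp(G_{\mathbf a})$ and the complex $\Delta_{\mathbf a}(I)$ unchanged (the inequality $c_k>a_k$ in (C2) persists) and leaves the alternative $G_{\mathbf a}\in\Delta(I)$ unchanged; hence $H_{\frm}^i(S/I)_{\mathbf a}$ is constant along this ray. Consequently a single nonzero piece with $G_{\mathbf a}\neq\emptyset$ forces infinitely many nonzero pieces, so finite length for $i<d$ is equivalent to the vanishing of all of them. Conversely, if all pieces with $G_{\mathbf a}\neq\emptyset$ vanish for $i<d$, then $H_{\frm}^i(S/I)$ is concentrated in $\bbN^n$; and for $\mathbf a\in\bbN^n$ having $a_j\geq M_j:=\max\{c_j : x_1^{c_1}\cdots x_n^{c_n}\text{ a generator of }I\}$ for some $j$, every nonvoid $\Delta_{\mathbf a}(I)$ is a cone with apex $j$. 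Indeed, writing $B_u=\{i:c_i>a_i\}$ for the minimal nonfaces coming from (C2), we have $j\notin B_u$ for every generator $u$, so $F\in\Delta_{\mathbf a}(I)$ implies $B_u\not\subseteq F=F\cup\{j\}$ and hence $F\cup\{j\}\in\Delta_{\mathbf a}(I)$; thus $\widetilde H_\ast=0$. Therefore $H_{\frm}^i(S/I)$ ($i<d$) is supported in the finite box $\prod_j[0,M_j)$ and has finite length. This shows that (FLC) is equivalent to the vanishing of $H_{\frm}^i(S/I)_{\mathbf a}$ for all $i<d$ and all $\mathbf a$ with $G_{\mathbf a}\neq\emptyset$.

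It remains to treat $\mathbf a\in\bbN^n$, where $G_{\mathbf a}=\emptyset\in\Delta(I)$ and Takayama's formula reads $H_{\frm}^i(S/I)_{\mathbf a}\cong\widetilde H_{i-1}(\Delta_{\mathbf a}(I))$. For $(1)\Rightarrow(2)$, Cohen--Macaulayness gives (FLC) at once and forces $\widetilde H_{i-1}(\Delta_{\mathbf a}(I))=0$ for all $i<d$, that is $\widetilde H_j(\Delta_{\mathbf a}(I))=0$ for $j<d-1$; since $\Delta_{\mathbf a}(I)\subseteq\Delta(I)$ has $\dim\Delta_{\mathbf a}(I)\leq\dim\Delta(I)=d-1$, this already yields the required vanishing for $j<\dim\Delta_{\mathbf a}(I)$. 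For $(2)\Rightarrow(1)$, assume (FLC) together with the homological condition. By the previous step only the pieces with $\mathbf a\in\bbN^n$ remain. Here (FLC) makes $S/I$ equidimensional, so $\Delta(I)$ is pure and the dimension formula recalled above gives $\dim\Delta_{\mathbf a}(I)=d-1$ for every nonvoid $\Delta_{\mathbf a}(I)$; the hypothesis then reads $\widetilde H_j(\Delta_{\mathbf a}(I))=0$ for all $j<d-1$, so in particular $\widetilde H_{i-1}(\Delta_{\mathbf a}(I))=0$ and $H_{\frm}^i(S/I)_{\mathbf a}=0$ for $i<d$. Combining the two ranges of degrees gives $H_{\frm}^i(S/I)=0$ for all $i<d$, i.e.\ $S/I$ is Cohen--Macaulay.

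The main obstacle is the step forcing a nonvoid $\Delta_{\mathbf a}(I)$ (for $\mathbf a\in\bbN^n$) to have full dimension $d-1$: the bound $\dim\Delta_{\mathbf a}(I)\leq d-1$ is automatic, but the reverse inequality is exactly the content of the pure-dimensional dimension formula, and without it a nonvoid $\Delta_{\mathbf a}(I)$ of dimension $<d-1$ could contribute surviving top homology in a degree $<d$ that the homological condition in $(2)$ does not detect. Thus the crux is to extract this purity of the slices $\Delta_{\mathbf a}(I)$ from (FLC); here I would lean on the equidimensionality forced by (FLC) and on the dimension formula established in the style of \cite[Lemma 1.3]{MiT}.
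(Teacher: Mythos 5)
Your proof is correct and takes essentially the same route as the paper's: both rest on Takayama's formula, on the invariance of $\Delta_{\bf a}(I)$ under further decreasing an already-negative coordinate (which converts the finite-length hypothesis into vanishing of all graded pieces $H_{\frm}^i(S/I)_{\bf a}$ with $G_{\bf a}\ne\emptyset$ in degrees $i<\dim S/I$), and on purity of $\Delta(I)$ forced by (FLC) together with the dimension formula $\dim\Delta_{\bf a}(I)=\dim\Delta(I)$ for ${\bf a}\in\bbN^n$. The only differences are cosmetic: you argue directly where the paper argues by contradiction, and your cone/finite-box argument showing the support of the low local cohomology lies in $\prod_j[0,M_j)$ is correct but extra work the paper's contradiction argument does not need.
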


\begin{proof}
$(1)\Longrightarrow (2):$ 
Since $S/I$ is Cohen-Macaulay, it has (FLC). 
For any ${\bf a} \in \bbN^n$, we have 
\[
 \widetilde{H}_i(\Delta_{\bf a}(I)) \cong H_{\frm}^{i+1}(S/I)_{\bf a} =0
\]
for all $i < \dim \Delta_{\bf a}(I)= \dim \Delta(I) = \dim S/I-1$
by Lemma \ref{Takayama} since $S/I$ is Cohen-Macaulay. 
\par \vspace{2mm} \par \noindent 
$(2)\Longrightarrow (1):$ 
Since $S/I$ has (FLC), $S/\sqrt{I}$ has also (FLC) and $\Delta(I)$ is pure; 
see \cite{HTT}. 
\par
Suppose that $S/I$ is \textit{not} Cohen-Macaulay. 
For any ${\bf a} \in \bbN^n$ we have 
\[
H_{\frm}^i(S/I)_{\bf a} \cong 
\widetilde{H}_{i-1}(\Delta_{\bf a}(I))=0
\]
for all $i \le \dim \Delta_{\bf a}(I) = \dim \Delta(I)$. 
So there exist a vector ${\bf a} \in \bbZ^n \setminus \bbN^n$ and 
an index $i \le \dim \Delta(I)$ such that 
\[
\widetilde{H}_{i-\sharp(G_{\bf a})-1}(\Delta_{\bf a}(I)) 
\cong H_{\frm}^i(S/I)_{\bf a} \ne 0. 
\]
Set ${\bf a} = (a_1,\ldots,a_n)$ and $a_j < 0$. 
Take any integer $k > 0$ and set ${\bf b} = {\bf a} - k {\bf e}_j$, 
where ${\bf e}_j$ is the $j$th unit vector.  
Then we have $\Delta_{\bf a}(I)=\Delta_{\bf b}(I)$ 
because $G_{\bf a} = G_{\bf b}$. 
In particular, $H_{\frm}^i(S/I)_{\bf b} \ne 0$. 
But this contradicts the assumption that $S/I$ has (FLC).  
\end{proof}

\par \vspace{2mm}
\subsection{Symbolic power ideals}
Let $I$ be a radical ideal of $S$. 
Let $\Min_S(S/I) = \{P_1,\ldots, P_r\}$ be the set of the 
minimal prime ideals of $I$, and 
put $W = S \setminus \bigcup_{i=1}^r P_i$. 
Given an integer $\ell \ge 1$, 
the \textit{$\ell$th symbolic power} of $I$ 
is defined to be the ideal
\[
 I^{(\ell)}= I^{\ell}S_W \cap S = \bigcap_{i=1}^r P_i^{\ell}S_{P_i} \cap S.
\]
In particular, if $I$ is a squarefree monomial ideal of $S$, 
then one has 
\[
 I^{(\ell)} = P_1^{\ell} \cap \cdots \cap P_r^{\ell}. 
\]
\par 
Let $\Delta$ be an arbitrary simplicial complex on $V=[n]$, and let 
$I_{\Delta} \subseteq S=K[x_1,\ldots,x_n]$ denote 
the Stanley-Reisner ideal of $\Delta$. 
For any integer $\ell \ge 1$ and ${\bf a} \in \bbN^{n}$, we set 
\[
 \Delta^{(\ell)}_{\bf a} = \langle F \in \mathcal{F}(\Delta) \;:\; 
\sum_{t \in V \setminus F} a_t \le \ell-1 \rangle. 
\] 

\par
We use the following remark and Proposition \ref{criterion} in 
the proof of Theorem \ref{SPcomplete}. 

\begin{remark} \label{MIT=TAK}
Under the notation above, for any ${\bf a} \in \bbN^n$, we have  
\begin{enumerate}
\item $\Delta_{\bf a}^{(\ell)} = \Delta_{\bf a}(I_{\Delta}^{(\ell)})$; 
see \cite[Section 1]{MiT}.
\item If $\Delta$ is pure and $\Delta_{\bf a}^{(\ell)} \ne \emptyset$ then 
$\dim \Delta_{\bf a}^{(\ell)} = \dim \Delta$. 
\end{enumerate}
\end{remark}

\par \vspace{2mm}
\subsection{Polarizations}
Now let $u = x_1^{a_1}x_2^{a_2} \cdots x_n^{a_n}$ be a monomial in $S$. 
Then we can associate to 
it a squarefree monomial $u^{\mathrm{pol}}$ as follows:
In the polarization process, each power of a variable $x_i^{a_i}$ is
replaced by a product of $a_i$ new variables $x_{i}^{(j)}$, 
$i \in \{1,\ldots, n\}$, $j \in \{0, 1, \ldots, a_i-1\}$:
\[
u^{\mathrm{pol}} = x_{1}^{(0)}x_{1}^{(1)}\cdots x_{1}^{(a_1-1)}
x_{2}^{(0)} x_{2}^{(1)} \cdots x_{2}^{(a_2-1)}
\cdots 
x_{n}^{(0)} x_{n}^{(1)} \cdots x_{n}^{(a_n-1)},
\] 
where all $x_i^{(j)}$ are distinct variables and 
$x_i^{(0)}=x_i$ for each $i$. 
We call $u^{\mathrm{pol}}$ the \textit{polarization} of 
$u$ (see \cite{SV}). 
Let $I =(u_1,\ldots,u_s)$ be a monomial ideal of $S$, 
where $\{u_1, \ldots,u_s\}$ is the minimal set 
of monomial generators of $I$. 
If $S^{\mathrm{pol}}$ is a polynomial ring over $K$ 
containing all monomials 
$u_1^{\mathrm{pol}},\ldots, u_s^{\mathrm{pol}}$, 
then we can consider the ideal 
$I^{\mathrm{pol}} = 
(u_1^{\mathrm{pol}}, \ldots, u_s^{\mathrm{pol}})$ 
of $S^{\mathrm{pol}}$. 
It is known that, for monomial ideals $I$ and $J$, one has (\cite{SV})
\begin{equation}\label{inter}
    (I \cap J)^{\mathrm{pol}} = I^{\mathrm{pol}} \cap J^{\mathrm{pol}}.
\end{equation}
\par 
It is well-known that if $S/I$ is Cohen--Macaulay then 
so is $S^{\mathrm{pol}}/I^{\mathrm{pol}}$.  
In the proof of the first main theorem, we need a stronger result:
For a given positive integer $k$, 
if $S/I$ satisfies Serre's condition $(S_k)$, 
then so does $S^{\mathrm{pol}}/I^{\mathrm{pol}}$; see \cite{MuT}.
Note that a similar statement for (FLC) does not hold in general. 

\par \vspace{2mm}
\subsection{Simplicial join}
Let $\Gamma$ (resp. $\Lambda$) be a non-empty 
simplicial complex on $V_1$ (resp. $V_2$) 
such that $V_1 \cap V_2 = \emptyset$. 
Then the \textit{simplicial join} of $\Gamma$ and $\Lambda$, denoted by 
$\Gamma*\Lambda$, is defined as follows:
\[
 \Gamma * \Lambda = \big\{ F_1 \cup F_2 \;:\; F_1 \in \Gamma, \; F_2 \in \Lambda \big\}. 
\] 
Then  $\Gamma * \Lambda$ is a simplicial complex on $V_1 \cup V_2$ and 
$\mathcal{F}(\Gamma * \Lambda)=\big\{ F_1 \cup F_2 \;:\; F_1 \in 
\mathcal{F}(\Gamma), \; F_2 \in \mathcal{F}(\Lambda) \big\}$. 
In particular, $\dim \Gamma * \Lambda = \dim \Gamma + \dim \Lambda+1$. 
Moreover, the $i$-th reduced homology group $\widetilde{H}_i(\Gamma * \Lambda)$  
over a field $K$ 
of  $\Gamma * \Lambda$ is given by 
the so-called \textit{K\"unneth formula}:
\begin{equation} \label{Kunneth}
\widetilde{H}_i(\Gamma * \Lambda)  \cong   
\bigoplus_{p+q=i-1} \widetilde{H}_p(\Gamma) \otimes  \widetilde{H}_q(\Lambda). 
\end{equation} 
Notice that $K[\Gamma * \Lambda] \cong K[\Gamma] \otimes_K K[\Lambda]$ as $K$-algebras;
see \cite[Lemma 1]{Fr}. 

\par 
For any disjoint union of two graphs $G_1$, $G_2$, 
we have $\Delta(G_1 \coprod G_2) = \Delta(G_1) * \Delta(G_2)$.

\vspace{3mm}
\section{Symbolic powers of edge ideals of disjoint union of complete graphs}

Let $r$, $n_1,\ldots,n_r$ be
positive integers, and let
\[
S = K[x_{ij} \;:\; 1 \le i \le r,\,1 \le j \le n_i]
\]
be a polynomial ring over a field $K$. 
For each integer $i$ with $1 \le i \le r$, if we put 
\[
 P_{ij} = (x_{i1},\ldots,\widehat{x_{ij}},\ldots, x_{in_i})S,
 \quad \text{and} \quad 
 I_i = P_{i1}\cap P_{i2} \cap \cdots \cap P_{in_i},  
\]
then $I_i$ is equal to $I(K_{n_i})S$, where 
\[
 I(K_{n_i})=(x_{ij}x_{ik} \,:\, 1 \le j < k \le n_i)
K[x_{i1}\ldots, x_{in_i}],
\]
denotes the edge ideal of the complete $n_i$-graph $K_{n_i}$ 
on the vertex set $V_i = \{x_{i1},\ldots,x_{in_i}\}$ 
for each $i=1,\ldots,r$. 

\par \vspace{2mm}
Let $G$ be the disjoint union of complete $n_i$-graphs 
for $i=1,2,\ldots,r$:
\[
 G = K_{n_1} \;\textstyle{\coprod} \;K_{n_2}\; 
 \textstyle{\coprod}\; \cdots \;\textstyle{\coprod} \,K_{n_r}.
\] 
Then the edge ideal $I(G)$ of $G$ is equal to $I_1 + I_2 + \cdots + I_r$. 
Moreover, an irredundant primary decomposition of $I(G)$ is given by 
\begin{equation} \label{eq-PrimeDecompo}
 I(G) =  \bigcap_{j_1,\ldots,j_r} (P_{1j_1} + \cdots + P_{rj_r}), 
\end{equation}
where $j_1,\ldots,j_r$ move through the whole range 
$1 \le j_1 \le n_1,\ldots, 1 \le j_r \le n_r$. 
In particular, 
\begin{equation} \label{eq-EdgePower}
  I(G)^{(\ell)} 
= \bigcap_{j_1,\ldots,j_r} (P_{1j_1} + \cdots + P_{rj_r})^{\ell}
\end{equation}
for every integer $\ell \ge 1$. 
If we put  
\[
 x_i = x_{i1} + x_{i2} + \cdots + x_{in_i}
\]
for $i=1,2,\ldots,r$,  
then a sequence $x_1,\ldots,x_r$ forms a system of parameters of $S/I(G)$ 
(and hence $S/I(G)^{(\ell)}$ for every $\ell \ge 1$). 
Thus
\[
 \dim S/I(G)^{(\ell)} = \dim S/I(G) = r. 
\]

\par \vspace{2mm}
The main goal of this section is to prove the following theorem:

\begin{thm} \label{SPcomplete}
Let $S=K[x_{ij} \;:\; 1 \le i \le r,\,1 \le j \le n_i]$ be a polynomial ring 
over a field $K$. 
Let $G$ be a disjoint union of complete $n_i$-graphs$:$ 
$G = K_{n_1} \textstyle{\coprod} K_{n_2} 
\textstyle{\coprod} \cdots \textstyle{\coprod} K_{n_r}$. 
Then $S/I(G)^{(\ell)}$ is Cohen--Macaulay for every $\ell \ge 1$. 
\end{thm}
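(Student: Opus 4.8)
The plan is to verify the two conditions of Proposition \ref{criterion}: that $S/I(G)^{(\ell)}$ has (FLC), and that $\widetilde{H}_i(\Delta_{\bf a}(I(G)^{(\ell)}))=0$ for every ${\bf a}\in\bbN^n$ and every $i<\dim\Delta_{\bf a}(I(G)^{(\ell)})$. Throughout I write $\Delta=\Delta(G)=\Delta(K_{n_1})*\cdots*\Delta(K_{n_r})$; since each $\Delta(K_{n_i})$ is the $0$-dimensional complex on the $n_i$ vertices of $V_i$, the complex $\Delta$ is pure of dimension $r-1$ and its facets are exactly the transversals $\{x_{1j_1},\dots,x_{rj_r}\}$ with $1\le j_i\le n_i$. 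By Remark \ref{MIT=TAK} we have $\Delta_{\bf a}(I(G)^{(\ell)})=\Delta_{\bf a}^{(\ell)}$, which is pure of dimension $r-1$ whenever it is non-empty, so the homological condition to be checked is vanishing below the top dimension $r-1$.

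First I would rewrite $\Delta_{\bf a}^{(\ell)}$ combinatorially. For ${\bf a}=(a_{ij})\in\bbN^n$ set $A_i=\sum_k a_{ik}$, $c_{ij}=A_i-a_{ij}\ge 0$, and $m_i=\min_j c_{ij}$; after normalizing by $c'_{ij}=c_{ij}-m_i\ge 0$ and $L=\ell-1-\sum_i m_i$, a partial transversal $F=\{x_{ij_i}:i\in U\}$ lies in $\Delta_{\bf a}^{(\ell)}$ if and only if $\sum_{i\in U}c'_{ij_i}\le L$. Thus $\Delta_{\bf a}^{(\ell)}$ is non-empty precisely when $L\ge 0$, and in that case each group contains at least one \emph{free} vertex, one with $c'_{ij}=0$. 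This puts $\Delta_{\bf a}^{(\ell)}$ into the following class: a \emph{threshold complex} is the complex of partial transversals over $r$ groups whose total cost is at most a fixed $L\ge 0$, where the costs $c'_{ij}\ge 0$ satisfy $\min_j c'_{ij}=0$ in each group.

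The heart of the proof is the purely combinatorial claim $P(r)$: every non-empty threshold complex $\Theta$ over $r$ groups satisfies $\widetilde{H}_i(\Theta)=0$ for $i<r-1$. I would prove this by a double induction, on $r$ and, for fixed $r$, on the number of non-free vertices ($c'_{ij}>0$) occurring in $\Theta$. The case $r=1$ is vacuous, and the inner base case---when every vertex of $\Theta$ is free---presents $\Theta$ as the simplicial join of $r$ discrete vertex sets, whose reduced homology is concentrated in degree $r-1$ by the K\"unneth formula \eqref{Kunneth}. For the inductive step, choose a non-free vertex $v=x_{i_0j_0}\in\Theta$. Its deletion $\Theta\setminus v$ is again a threshold complex over $r$ groups with one fewer non-free vertex (deleting $v$ leaves the free vertices untouched), so $\widetilde{H}_i(\Theta\setminus v)=0$ for $i<r-1$ by the inner hypothesis; and $\link_\Theta v$ is a threshold complex over the $r-1$ groups other than $i_0$, with threshold $L-c'_{i_0j_0}\ge 0$, so $\widetilde{H}_j(\link_\Theta v)=0$ for $j<r-2$ by the outer hypothesis $P(r-1)$. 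Feeding these vanishings into the long exact sequence relating $\widetilde{H}_\ast(\Theta\setminus v)$, $\widetilde{H}_\ast(\Theta)$ and $\widetilde{H}_{\ast-1}(\link_\Theta v)$ (the closed star of $v$ being a contractible cone) yields $\widetilde{H}_i(\Theta)=0$ for $i<r-1$, completing $P(r)$ and hence the homological condition.

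Finally I would establish (FLC) by induction on $r$. Since $I(G)^{(\ell)}$ is a symbolic power of a radical ideal it has no embedded primes, and all its minimal primes $P_{1j_1}+\cdots+P_{rj_r}$ have height $\sum_i(n_i-1)$, so $S/I(G)^{(\ell)}$ is equidimensional; it remains to see that its localization at every prime $P\subsetneq\frm$ is Cohen--Macaulay. For such $P$ some variable $x_{i_0j_0}$ is a unit in $S_P$, and since $x_{i_0j_0}x_{i_0k}\in I(G)$ the only minimal primes surviving in $S_P$ are those with $j_{i_0}=j_0$; this exhibits $(S/I(G)^{(\ell)})_P$ as a localization of the symbolic-power ring for the disjoint union of the remaining $r-1$ complete graphs, which is Cohen--Macaulay by the induction hypothesis (the base case $r=1$ being one-dimensional with no embedded primes). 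Hence $S/I(G)^{(\ell)}$ has (FLC), and Proposition \ref{criterion} delivers the theorem. I expect the main obstacle to be the vanishing $P(r)$; once the threshold-complex reformulation and the deletion/link bookkeeping are set up correctly, the Mayer--Vietoris induction and the (FLC) argument are routine.
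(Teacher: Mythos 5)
Your proposal is correct, but it reaches the key homological vanishing by a genuinely different route than the paper. The paper proves Theorem \ref{SPcomplete} by induction on $r$ through Lemma \ref{Key}, a general join lemma: writing $\Delta=\Gamma*\Lambda$, it shows $\Delta_{\bf a}^{(\ell)}=\bigcup_{k=1}^{\ell}\Gamma_{{\bf a}_1}^{(\ell-k+1)}*\Lambda_{{\bf a}_2}^{(k)}$, filters this union by the partial unions $\Pi_j$, and kills $\widetilde{H}_i$ for $i<d-1$ by the K\"unneth formula \eqref{Kunneth} at each stage together with a Mayer--Vietoris induction along the filtration, using Cohen--Macaulayness of \emph{all} lower symbolic powers $I_{\Gamma}^{(i)}$, $I_{\Lambda}^{(i)}$, $i\le\ell$, of the two factors; that lemma applies to any join whose factors have CM symbolic powers. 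You instead exploit that each $\Delta(K_{n_i})$ is $0$-dimensional, so $\Delta_{\bf a}^{(\ell)}$ is exactly a complex of partial transversals of bounded cost, and you prove the vanishing $P(r)$ directly by a deletion/link double induction (deletion of a non-free vertex stays in the class with one fewer non-free vertex; the link is a threshold complex over $r-1$ groups with threshold $L-c'_{i_0j_0}\ge 0$), invoking K\"unneth only in the all-free base case. Your normalization $c_{ij}=A_i-a_{ij}$, $c'_{ij}=c_{ij}-m_i$, the extension-by-free-vertices criterion for membership of a partial transversal, and the well-foundedness of the double induction all check out, so your argument is more elementary and self-contained, at the price of being tied to this specific class of complexes where Lemma \ref{Key} is more flexible. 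One small patch is needed in your (FLC) step: after inverting $x_{i_0j_0}$, the surviving intersection is $\bigcap_{Q'}(Q'+P_{i_0j_0})^{\ell}$, which is the $\ell$-th symbolic power of $I(G')+(x_{i_0k} : k\ne j_0)$, i.e.\ the edge ideal of the remaining $r-1$ complete graphs \emph{plus a set of variables}, not literally the symbolic-power ring of $G'$, since the variables $x_{i_0k}$ are neither killed nor inverted in $S_P$. To conclude Cohen--Macaulayness you still need the decomposition $\bigcap_{Q'}(Q'+(y))^{\ell}=\sum_{k=0}^{\ell}I(G')^{(k)}y^{\ell-k}$, which yields $T/I^{(\ell)}\cong\bigoplus_{k=0}^{\ell}S'/I(G')^{(k)}$ as in the paper's Corollary \ref{Main1-extend}; this is a routine computation, and your induction hypothesis (all symbolic powers, $r-1$ components) supplies exactly the required input, so the gap is cosmetic rather than structural. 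Your equidimensional-plus-CM-on-the-punctured-spectrum derivation of (FLC) matches the reasoning the paper itself uses in Proposition \ref{Main2-flc}, and is in fact more explicit than the paper's one-line reduction inside the proof of Lemma \ref{Key}.
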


\begin{remark} \label{rem-PowerRestrict}
In the above theorem, we do not need to assume that $\max\{n_1,\ldots,n_d\} \ge 2$. 
\end{remark}

\par \vspace{1mm}
In order to prove Theorem \ref{SPcomplete}, we need the following key lemma. 

\begin{lemma} \label{Key}
Let $\Gamma$ $($resp. $\Lambda$$)$ be a simplicial complex 
on $V_1$ $($resp. $V_2$$)$
such that $V_1 \cap V_2 = \emptyset$. 
Put $\Delta = \Gamma * \Lambda$ and $V = V_1 \cup V_2$. 
Set $S_1 = K[V_1]$, $S_2 = K[V_2]$ and $S=S_1 \otimes_K S_2$. 
If $S_1/I_{\Gamma}^{(i)}$ and $S_2/I_{\Lambda}^{(i)}$ are Cohen-Macaulay 
for every $i \le \ell$, then $S/I_{\Delta}^{(\ell)}$ is Cohen-Macaulay. 
\end{lemma}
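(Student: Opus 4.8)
The plan is to check the two conditions of Proposition~\ref{criterion} for the monomial ideal $I_{\Delta}^{(\ell)}$, namely that $S/I_{\Delta}^{(\ell)}$ has (FLC) and that $\widetilde{H}_p(\Delta_{\bf a}^{(\ell)})=0$ for every ${\bf a}\in\bbN^n$ and every $p<\dim\Delta_{\bf a}^{(\ell)}$. Throughout I split each vector as ${\bf a}=({\bf a}_1,{\bf a}_2)$ along $V=V_1\cup V_2$ and abbreviate $\Gamma_i=\Delta_{{\bf a}_1}(I_{\Gamma}^{(i)})$ and $\Lambda_j=\Delta_{{\bf a}_2}(I_{\Lambda}^{(j)})$, so that by Remark~\ref{MIT=TAK} one has the facet descriptions $\Gamma_i=\langle F_1\in\mathcal{F}(\Gamma):\sum_{t\in V_1\setminus F_1}a_t\le i-1\rangle$, and likewise for $\Lambda_j$. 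Since $S_1/I_{\Gamma}$ and $S_2/I_{\Lambda}$ are Cohen--Macaulay they have (FLC), so $\Gamma$ and $\Lambda$ are pure, and hence $\Delta=\Gamma*\Lambda$ is pure; consequently $\dim\Gamma_i=\dim\Gamma$ and $\dim\Lambda_j=\dim\Lambda$ whenever these complexes are nonempty, by Remark~\ref{MIT=TAK}(2).

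For the homological condition I would first read off the join decomposition
\[
\Delta_{\bf a}^{(\ell)}=\bigcup_{i=1}^{\ell}\Gamma_i*\Lambda_{\ell+1-i}
\]
from the facet descriptions: a face $H=H_1\cup H_2$ (with $H_1\subseteq V_1$, $H_2\subseteq V_2$) lies in the left-hand side exactly when there are facets $F_1\supseteq H_1$ of $\Gamma$ and $F_2\supseteq H_2$ of $\Lambda$ with $\sum_{t\in V_1\setminus F_1}a_t+\sum_{t\in V_2\setminus F_2}a_t\le\ell-1$, and splitting this budget as $(i-1)+(\ell-i)$ places $H$ in the $i$-th join. After discarding the empty terms (the surviving indices form an interval, since $\Gamma_i$ and $\Lambda_j$ increase with $i$ and $j$), each remaining $\Gamma_i*\Lambda_{\ell+1-i}$ has both factors nonempty, hence full-dimensional. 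For the relevant range $1\le i\le\ell$ the rings $S_1/I_{\Gamma}^{(i)}$ and $S_2/I_{\Lambda}^{(\ell+1-i)}$ are Cohen--Macaulay by hypothesis, so Proposition~\ref{criterion} gives $\widetilde{H}_s(\Gamma_i)=0$ for $s<\dim\Gamma$ and $\widetilde{H}_t(\Lambda_{\ell+1-i})=0$ for $t<\dim\Lambda$; thus each factor has reduced homology concentrated in its top degree, and by the K\"unneth formula~(\ref{Kunneth}) so does each term $\Gamma_i*\Lambda_{\ell+1-i}$, concentrated in degree $\dim\Delta$. Writing $U_k$ for the union of the first $k$ nonempty terms and using $(A*B)\cap(C*D)=(A\cap C)*(B\cap D)$ together with the monotonicity of $\Gamma_i,\Lambda_j$, I obtain $U_{k-1}\cap(\Gamma_k*\Lambda_{\ell+1-k})=\Gamma_{k-1}*\Lambda_{\ell+1-k}$, a term of the same concentrated shape. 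A Mayer--Vietoris induction on $k$ then forces $\widetilde{H}_p(U_k)=0$ for every $p<\dim\Delta$, which for the top value of $k$ is precisely the required vanishing, since $\dim\Delta_{\bf a}^{(\ell)}=\dim\Delta$.

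To obtain (FLC) I would induct on $\card(V_1)+\card(V_2)$, proving the stronger claim that $(S/I_{\Delta}^{(\ell)})[x_v^{-1}]$ is Cohen--Macaulay for every variable $x_v$. Since any homogeneous prime $P\ne\frm$ omits some $x_v$, this makes $S/I_{\Delta}^{(\ell)}$ Cohen--Macaulay on the punctured spectrum, and together with equidimensionality---symbolic powers preserve the minimal primes, which have equal height because $\Delta$ is pure---this yields (FLC). Inverting $x_v$ kills each component $P_F^{\ell}$ with $v\notin F$, and using purity the surviving components assemble into the symbolic power of the link:
\[
(S/I_{\Delta}^{(\ell)})[x_v^{-1}]\cong\big(R/I_{\link_{\Delta} v}^{(\ell)}\big)[x_v,x_v^{-1}],\qquad R=K[V\setminus\{v\}],
\]
where $\link_{\Delta} v=(\link_{\Gamma} v)*\Lambda$ when $v\in V_1$ (and symmetrically when $v\in V_2$). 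The same computation applied to $\Gamma$ alone identifies $(S_1/I_{\Gamma}^{(i)})[x_v^{-1}]$ with $\big(K[V_1\setminus\{v\}]/I_{\link_{\Gamma} v}^{(i)}\big)[x_v,x_v^{-1}]$; as the former is Cohen--Macaulay (a localization of the Cohen--Macaulay ring $S_1/I_{\Gamma}^{(i)}$) and Cohen--Macaulayness descends along the Laurent extension, $K[V_1\setminus\{v\}]/I_{\link_{\Gamma} v}^{(i)}$ is Cohen--Macaulay for every $i\le\ell$. Hence the pair $(\link_{\Gamma} v,\Lambda)$ satisfies the hypotheses of the lemma on strictly fewer vertices, so by the inductive hypothesis $R/I_{\link_{\Delta} v}^{(\ell)}$ is Cohen--Macaulay, and therefore so is $(S/I_{\Delta}^{(\ell)})[x_v^{-1}]$. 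The base case, where $V_1$ or $V_2$ is empty, is immediate, as then $I_{\Delta}^{(\ell)}$ is $I_{\Lambda}^{(\ell)}$ or $I_{\Gamma}^{(\ell)}$.

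Feeding both conclusions into Proposition~\ref{criterion} shows that $S/I_{\Delta}^{(\ell)}$ is Cohen--Macaulay, as desired. The K\"unneth and Mayer--Vietoris bookkeeping is routine once the pieces are in place; the two steps I expect to demand the most care are the clean derivation of the join decomposition of $\Delta_{\bf a}^{(\ell)}$---in particular keeping exact track of which terms are nonempty, so that the homological concentration and the Mayer--Vietoris intersections genuinely have the claimed form---and, above all, the (FLC) step, whose crux is verifying that inverting a single variable converts the symbolic power of $\Delta$ into a Laurent extension of the symbolic power of the link $\link_{\Delta} v=\link_{\Gamma} v*\Lambda$, so that the induction on the number of vertices can be closed by the lemma itself.
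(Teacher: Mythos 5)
Your proposal is correct and follows essentially the same route as the paper's proof: you verify the two conditions of Proposition~\ref{criterion}, derive the same decomposition $\Delta_{\bf a}^{(\ell)}=\bigcup_{k=1}^{\ell}\Gamma_{{\bf a}_1}^{(\ell+1-k)}*\Lambda_{{\bf a}_2}^{(k)}$ (the paper's Claim 1), and run the identical K\"unneth/Mayer--Vietoris induction with the same telescoping intersection $\Gamma_{{\bf a}_1}^{(\ell-j+1)}*\Lambda_{{\bf a}_2}^{(j-1)}$ (the paper's Claim 2). Your explicit localization argument for (FLC), via $\link_{\Delta}\{v\}=(\link_{\Gamma}\{v\})*\Lambda$ and descent along the Laurent extension, merely fleshes out the induction on $\sharp(V)$ that the paper invokes in a single sentence, and is if anything more careful than the paper (modulo the minor point that links may acquire variables in the ideal, which the paper treats separately in the spirit of Corollary~\ref{Main1-extend}).
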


\begin{proof}
We may assume that $V_1=\{1,2,\ldots,m\}$, 
$V_2 = \{m+1,\ldots,n\}$ and $V=[n]$. 
Note that $\Gamma$, $\Lambda$, and $\Delta$ are pure. 
By an inductive argument on $n = \sharp(V)$, we may assume that 
$S/I_{\Delta}^{(\ell)}$ has (FLC). 
Then  we must show that $\widetilde{H}_i(\Delta^{(\ell)}_{\bf a})=0$ for all 
${\bf a} \in \mathbb{N}^n$ and $i < \dim \Delta_{\bf a}^{(\ell)} = \dim \Delta$
with $\Delta_{\bf a}^{(\ell)} \ne \emptyset$. 
We first prove the following claim.
\begin{description}
\item[Claim 1] For each ${\bf a} \in \mathbb{N}^n$, 
$\Delta_{\bf a}^{(\ell)} = \bigcup_{k=1}^{\ell} 
\Gamma_{{\bf a}_1}^{(\ell+1-k)} * \Lambda_{{\bf a}_2}^{(k)}$ holds, 
where ${\bf a}_1 = {\bf a} \,|_{V_1}$ and ${\bf a}_2 = {\bf a}\,|_{V_2}$. 
\end{description}

\par \vspace{2mm}
Since $\mathcal{F}(\Delta) 
= \{F_1 \cup F_2 \,:\, F_1 \in \mathcal{F}(\Gamma),
\,F_2 \in \mathcal{F}(\Lambda)\}$, we have 
\begin{eqnarray*}
\Delta_{\bf a}^{(\ell)} 
&=& \big\langle F_1 \cup F_2 \,:\; F_1 \in \mathcal{F}(\Gamma),\; 
F_2 \in \mathcal{F}(\Lambda),\; 
0 \le \sum_{t \in V \setminus F} a_t \le \ell-1 \big\rangle \\
&=& \bigcup_{k=1}^{\ell}
\big\langle F_1 \cup F_2 \,:\; F_1 \in \mathcal{F}(\Gamma),\; 
F_2 \in \mathcal{F}(\Lambda), \; \sum_{t \in V_1 \setminus F_1} a_t \le \ell-k,\, 
\sum_{t' \in V_2 \setminus F_2} a_{t'} \le k-1 \big\rangle \\
& =& \bigcup_{k=1}^{\ell} \Gamma_{{\bf a}_1}^{(\ell-k+1)} * \Lambda_{{\bf a}_2}^{(k)},  
\end{eqnarray*} 
as required. 
We have proved the claim 1. 

\par \vspace{3mm}
Put  $d=\dim \Delta+1 = \dim \Gamma + \dim \Lambda+2$. 
For $j = 1,\ldots,\ell$, we set 
\[
\Pi_j = \bigcup_{k=1}^{j}  
\Gamma_{{\bf a}_1}^{(\ell-k+1)} * \Lambda_{{\bf a}_2}^{(k)}. 
\]
We next prove the following claim. 

\begin{description}
\item[Claim 2]  $\widetilde{H}_i(\Pi_j) =0$ holds 
for every $i < d-1$ and $j=1,\ldots,\ell$.  
\end{description} 

We use an induction on $j$. 
First consider  the case where $j=1$. 
Then $\Pi_1 = \Gamma_{{\bf a}_1}^{(\ell)} * \Lambda_{{\bf a}_2}$. 
As $I_{\Gamma}^{(\ell)}$ and $I_{\Lambda}$ are Cohen-Macaulay by assumption, 
we get 
\[
p < \dim \Gamma = \dim \Gamma_{{\bf a}_1}^{(\ell)} \Longrightarrow 
\widetilde{H}_p(\Gamma_{{\bf a}_1}^{(\ell)}) =0 
\]
and 
\[
q < \dim \Lambda = \dim \Lambda_{{\bf a}_2} \Longrightarrow 
\widetilde{H}_q (\Lambda_{{\bf a}_2}) =0.  
\]
Now suppose that $i < \dim \Pi_1 = \dim \Gamma + \dim \Lambda+1 = d-1$. 
Then for any pair $(p,q)$ with $p+q=i-1$, either $p < \dim \Gamma$  or 
$q < \dim \Lambda$ holds. 
Hence the K\"unneth formula (see subsection 1.6) yields that 
\[
\widetilde{H}_i (\Pi_1) \cong \bigoplus_{p+q=i-1} \widetilde{H}_p (\Gamma_{{\bf a}_1}^{(\ell)})
\otimes_K \widetilde{H}_q (\Lambda_{{\bf a}_2})=0. 
\]
So we have proved the case where $j=1$. 
\par 
Now assume that $(\ell \ge) j \ge 2$ and $\widetilde{H}_i(\Pi_{j-1})=0$ for all $i < d-1$. 
Then we must show that $\widetilde{H}_i(\Pi_{j})=0$ for all $i < d-1$. 
In order to do that, we put $\Sigma = \Gamma_{{\bf a}_1}^{(\ell-j+1)} * \Lambda_{{\bf a}_2}^{(j)}$. 
Then $\Pi_{j-1} \cup \Sigma = \Pi_j$ and 
\begin{eqnarray*}
\Pi_{j-1} \cap \Sigma 
& = & \bigcup_{k=1}^{j-1} \big(\Gamma_{{\bf a}_1}^{(\ell-k+1)} * \Lambda_{{\bf a}_2}^{(k)} \big) 
\cap \big(\Gamma_{{\bf a}_1}^{(\ell-j+1)} * \Lambda_{{\bf a}_2}^{(j)} \big) \\
& = & \bigcup_{k=1}^{j-1} 
\bigg\{ \big(\Gamma_{{\bf a}_1}^{(\ell-k+1)} * \Lambda_{{\bf a}_2}^{(k)} \big) \cap 
\big(\Gamma_{{\bf a}_1}^{(\ell-j+1)} * \Lambda_{{\bf a}_2}^{(j)} \big)
\bigg\} \\
&=& \bigcup_{k=1}^{j-1} \big(\Gamma_{{\bf a}_1}^{(\ell-j+1)} * \Lambda_{{\bf a}_2}^{(k)} \big) \\
& =& \Gamma_{{\bf a}_1}^{(\ell-j+1)} * \Lambda_{{\bf a}_2}^{(j-1)}. 
\end{eqnarray*}
Thus the Mayer-Vietoris sequence yields the following exact sequence for each $i$:
\[
\cdots \to \widetilde{H}_i (\Pi_{j-1}) \oplus 
\widetilde{H}_i (\Gamma_{{\bf a}_1}^{(\ell-j+1)} * \Lambda_{{\bf a}_2}^{(j)})
\to \widetilde{H}_i (\Pi_j) \to  
\widetilde{H}_{i-1} (\Gamma_{{\bf a}_1}^{(\ell-j+1)} * \Lambda_{{\bf a}_2}^{(j-1)}) \to \cdots .
\]
By a similar argument as above, we have 
\[
\widetilde{H}_i (\Gamma_{{\bf a}_1}^{(\ell-j+1)} * \Lambda_{{\bf a}_2}^{(j)}) = 
\widetilde{H}_{i-1} (\Gamma_{{\bf a}_1}^{(\ell-j+1)} * \Lambda_{{\bf a}_2}^{(j-1)}) =0
\]
for all $i < d-1$. 
Moreover, the induction hypothesis implies  $\widetilde{H}_i(\Pi_{j-1})=0$ for all $i < d-1$. 
Hence $\widetilde{H}_i(\Pi_{j})=0$ for all $i < d-1$, as required. 
Therefore we obtain that $\widetilde{H}_i(\Delta_{\bf a}^{(\ell)}) =0$ for all $i < d-1$. 
This completes the proof.  
\end{proof}

\begin{proof}[Proof of Theorem $\ref{SPcomplete}$]
We prove the assertion by an induction on $r$. 
If $r=1$, then the assertion is clear because $\dim S/I(K_{n_1})^{(\ell)}=\dim S/I(K_{n_1}) =1$. 
\par
Let $r \ge 2$. 
Put 
\[
S'=K[x_{ij} \,:\, 1 \le i \le r-1,\; 1 \le j \le n_i] \quad 
\text{and} \quad 
G'=K_{n_1} \coprod \cdots \coprod K_{n_{r-1}}. 
\]
By the induction hypothesis, we may assume that 
 $S'/I(G')^{(i)}$ is Cohen-Macaulay for all $i \ge 1$. 
As $\Delta(G) = \Delta(G' \coprod K_{n_r}) = \Delta(G')*\Delta(K_{n_r})$, 
by virtue of Lemma \ref{Key}, we can conclude that 
$I(G)^{(\ell)}$ is Cohen-Macaulay for all $\ell \ge 1$.     
\end{proof}

\par \vspace{2mm}
In order to discuss (FLC) properties of symbolic or ordinary powers, 
we generalize Theorem \ref{SPcomplete}  to the following corollary.   

\begin{cor} \label{Main1-extend}
Let $G = K_{n_1}\coprod \ldots \coprod K_{n_r}$ be a disjoint union of finitely many complete graphs, 
and let $y_1,\ldots,y_s$ be variables which are not vertices of $G$. 
Put $S=K[v \,:\, v \in G]$, $T=S[y_1,\ldots,y_s]$ and $I=I(G)+(y_1,\ldots,y_s)$.  
Then $T/I^{(\ell)}$ is Cohen--Macaulay for all $\ell \ge 1$.  
\end{cor}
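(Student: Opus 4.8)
The plan is to reduce the statement to Theorem \ref{SPcomplete} by computing $I^{(\ell)}$ explicitly and then recognizing the quotient $T/I^{(\ell)}$ as a finite direct sum of Cohen--Macaulay $S$-modules. Write $R = K[y_1,\ldots,y_s]$, so that $T = S \otimes_K R$, put $J = (y_1,\ldots,y_s)T$, and grade $T$ by $y$-degree; for $c \ge 0$ let $R_c \subseteq R$ denote the finite-dimensional space of forms of degree $c$ in the $y_i$, so that $T = \bigoplus_{c \ge 0} S \otimes_K R_c$. Since the $y_i$ are new variables, $I = I(G) + J$ is again a squarefree monomial ideal, and its minimal primes are exactly the ideals $P + J$ with $P \in \Min(I(G))$; note each such $P$, and hence each $P+J$, is generated by a subset of the variables.

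The key step I would carry out is the computation of the $y$-graded pieces of $I^{(\ell)}$. Because $I$ is a squarefree monomial ideal, $I^{(\ell)} = \bigcap_{P \in \Min(I(G))}(P+J)^{\ell}$. Expanding $(P+J)^{\ell} = \sum_{a=0}^{\ell} P^{a}J^{\ell-a}$ and using that $J^{b}$ has $y$-degree-$c$ component equal to $S \otimes_K R_c$ when $c \ge b$ and $0$ otherwise, one sees that the degree-$c$ component of $(P+J)^{\ell}$ equals $P^{\max(0,\,\ell-c)} \otimes_K R_c$. Intersection of monomial ideals is compatible with the $y$-grading, so
\[
\big[I^{(\ell)}\big]_c = \Big(\bigcap_{P \in \Min(I(G))} P^{\max(0,\ell-c)}\Big) \otimes_K R_c = I(G)^{(\max(0,\ell-c))} \otimes_K R_c,
\]
the last equality being the formula for symbolic powers of squarefree monomial ideals. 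Consequently $I^{(\ell)} = \bigoplus_{c \ge 0} I(G)^{(\max(0,\ell-c))}\otimes_K R_c$, and since the summand vanishes once $c \ge \ell$ we obtain the $S$-module isomorphism
\[
T/I^{(\ell)} \;\cong\; \bigoplus_{c=0}^{\ell-1} \big(S/I(G)^{(\ell-c)}\big) \otimes_K R_c.
\]

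To finish, recall from Theorem \ref{SPcomplete} that each $S/I(G)^{(\ell-c)}$ is Cohen--Macaulay, of dimension $r = \dim S/I(G)$ independent of $c$. As each $R_c$ is a finite-dimensional $K$-vector space, the right-hand side is a finite direct sum of Cohen--Macaulay $S$-modules all of dimension $r$; hence $T/I^{(\ell)}$ is a Cohen--Macaulay $S$-module of dimension $r$. It then remains to transfer this from $S$ to $T$: since $J^{\ell} \subseteq I^{(\ell)}$, the module $T/I^{(\ell)}$ is annihilated by a power of $J$, hence is module-finite over $S$, with $\frm_S = \frm_T \cap S$. The $\frm_S$-adic and $\frm_T$-adic topologies on $T/I^{(\ell)}$ therefore agree, so $H_{\frm_T}^i(T/I^{(\ell)}) = H_{\frm_S}^i(T/I^{(\ell)})$ for all $i$, which gives $\depth_{\frm_T} T/I^{(\ell)} = r = \dim_T T/I^{(\ell)}$, i.e. $T/I^{(\ell)}$ is Cohen--Macaulay.

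The only genuinely delicate point I anticipate is this last change-of-rings step: one must check that Cohen--Macaulayness over the subring $S$ forces it over $T$. This is routine once one observes that $T/I^{(\ell)}$ is $J$-power-torsion, and hence finite over $S$ with matching graded maximal ideals; alternatively the same conclusion follows by an induction on the number $s$ of new variables, adjoining one $y_i$ at a time, where at each stage the quotient is free of finite rank over the previous polynomial ring modulo $y_i^{\ell}$. I expect this bookkeeping, rather than any deep idea, to be the main thing to get right. It is also precisely what blocks a direct appeal to Lemma \ref{Key} with $\Lambda$ the irrelevant complex $\{\emptyset\}$ on $\{y_1,\ldots,y_s\}$, since that $\Lambda$ is not a simplicial complex in the present convention, so a separate argument of the above kind is needed.
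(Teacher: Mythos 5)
Your proof is correct and takes essentially the same route as the paper: the paper reduces to $s=1$ and computes $I^{(\ell)} = \sum_{k=0}^{\ell} I(G)^{(k)}\, y^{\ell-k}$ from the decomposition $I^{(\ell)}=\bigcap_j (P_j,y)^{\ell}$, obtaining $T/I^{(\ell)} \cong \bigoplus_{k=1}^{\ell} S/I(G)^{(k)}$ as $S$-modules and invoking Theorem \ref{SPcomplete}, which is exactly your $y$-graded decomposition specialized to one variable. Your only departures are minor: you handle all $s$ variables at once via the $y$-multigrading rather than reducing to $s=1$, and you spell out the $S$-to-$T$ Cohen--Macaulayness transfer (finiteness of $T/I^{(\ell)}$ over $S$ and agreement of the local cohomologies) that the paper leaves implicit.
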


\begin{proof}
We may assume that $s=1$, and put $y=y_1$ for simplicity. 
Let $I(G)=\cap_j P_j$ be an irredundant primary decomposition of $I(G)$. 
Since $I=\cap_j (P_j,y)$ gives an irredundant primary decomposition of $I$,
we have 
\[
I^{(\ell)}  =  \bigcap_j (P_j,y)^{\ell} 
 =  \bigcap_j \sum_{k=0}^{\ell} P_j^{k} y^{\ell-k} 
 =  \sum_{k=0}^{\ell} \left(\cap_i P_j^{k}\right) y^{\ell-k} 
 =  \sum_{k=0}^{\ell} I(G)^{(k)} y^{\ell-k}.  
\]
Hence it follows that 
\[
T/I^{(\ell)} \cong S/I(G)^{(\ell)} \oplus S/I(G)^{(\ell-1)} \oplus \cdots \oplus S/I(G)
\]
as $S$-modules. 
Since all $S$-modules of the right-hand side are Cohen--Macaulay, so is $T/I^{(\ell)}$, as required. 
\end{proof}

\begin{exam} \label{ex-ci}
If $G$ consists of $r$ isolated edges and $s$ isolated vertices, then 
\[
 S=K[x_{11},x_{12},\ldots,x_{r1},x_{r2},y_1,\ldots,y_s],\quad 
I(G)=(x_{11}x_{12},\ldots,x_{r1}x_{r2}). 
\]
\par 
In particular, $S/I(G)^{(\ell)}=S/I(G)^{\ell}$ is Cohen--Macaulay for every $\ell \ge 1$. 
\par \vspace{4mm}
\begin{center}
\begin{picture}(200,40)
  \put(-40,20){$G=$}
  \put(0,5){\circle*{4}}
  \put(25,5){\circle*{4}}
  \put(125,5){\circle*{4}}
  \put(175,20){\circle*{4}}
  \put(200,20){\circle*{4}}
  \put(220,20){$\cdots$}
  \put(250,20){\circle*{4}}
  \put(0,35){\circle*{4}}
  \put(25,35){\circle*{4}}
  \put(125,35){\circle*{4}}
%
  \put(0,5){\line(0,1){30}}
  \put(25,5){\line(0,1){30}}
  \put(125,5){\line(0,1){30}}  
%
  \put(-17,-5){$x_{11}$}
  \put(22,-5){$x_{21}$}
  \put(130,0){$x_{r1}$}
  \put(-17,39){$x_{12}$}
  \put(22,39){$x_{22}$}
  \put(130,35){$x_{r2}$}
  \put(70,20){$\cdots$}
  \put(175,29){$y_1$}
  \put(200,29){$y_2$}
  \put(250,29){$y_s$}
\end{picture}
\end{center}
\par 
This complete intersection complex is the boundary complex of a simplex or 
an iterated cone of a cross polytope. 
Namely, $I(G)=I_{\Delta(\mathcal{P})}$ holds, where 
$\mathcal{P}$ is the $s$-iterated cone of the cross $r$-polytope. 

\end{exam}

\par \vspace{2mm}
The next example shows that our theorem cannot be generalized 
for mixed symbolic powers. 

\begin{exam} \label{ex-mixedpower}
Let $G$ be a complete $n$-graph. 
Then $I(G) = P_1 \cap \cdots \cap P_n$, 
where $P_i=(x_1,\ldots,\widehat{x_i},\ldots,x_n)$
for each $i=1,\ldots,n$. 
Since $\dim S/I(G) =1$ and $P_i^{a}$ has no embedded primes for any integer $a \ge 1$, 
$S/P_1^{a_1} \cap \cdots \cap P_n^{a_n}$ 
is a Cohen--Macaulay ring of dimension $1$ for every positive integers $a_1,\ldots,a_n$. 
\par 
A similar assertion does \textit{not} hold in general for two disjoint union of 
complete graphs. 
For example, let $I(G) = (x_1x_2,x_1x_3,x_2x_3,y_1y_2)$ be the edge ideal of 
$K_3 \;\textstyle{\coprod}\; K_2$ in $S=\QQ[x_1,x_2,x_3,y_1,y_2]$. 
Then 
\[
 I(G) = (x_1,x_2,y_1)\cap (x_1,x_2,y_2)\cap (x_1,x_3,y_1)
\cap (x_1,x_3,y_2)\cap (x_2,x_3,y_1)\cap (x_2,x_3,y_2). 
\]  
Our theorem says that 
\[
 I(G)^{(2)} = (x_1,x_2,y_1)^2 \cap (x_1,x_2,y_2)^2 \cap (x_1,x_3,y_1)^2
\cap (x_1,x_3,y_2)^2 \cap (x_2,x_3,y_1)^2 \cap (x_2,x_3,y_2)^2
\]
is a Cohen--Macaulay ring of dimension $2$, that is, $\pd_S S/I(G)^{(2)}=3$.  
Indeed, by Macaulay 2, the minimal free resolution of 
$S/I(G)^{(2)}$ over $S$ is given by 
\[
0 \to S^5 \to S^{12} \to S^8 \to S \to S/I(G)^{(2)} \to 0.  
\]
However, this is no longer true for mixed symbolic powers. 
For instance, put 
\[
 J_a= (x_1,x_2,y_1)^2 \cap (x_1,x_2,y_2)^2 \cap (x_1,x_3,y_1)^2
\cap (x_1,x_3,y_2)^2 \cap (x_2,x_3,y_1)^2 \cap (x_2,x_3,y_2)^a
\]
for every positive integer $a \ge 2$. 
When $a \le 3$, $S/J_a$ is Cohen--Macaulay. But $S/J_4$ is \textit{not}.  
\end{exam}

\par 
The following question seems to be interesting. 

\begin{quest} \label{Q-MixedPower}
We use the same notation as in $(\ref{eq-PrimeDecompo})$. 
Let $\ell_{j_1,\ldots,j_r}$ be given integers. 
When is the following mixed symbolic power ideal  
\[
\bigcap_{j_1,\ldots,j_r} (P_{1,j_1}+\cdots + P_{r,j_r})^{\ell_{j_1,\ldots,j_r}}
\]
Cohen--Macaulay?
\end{quest}

\medskip
\section{Non-Cohen--Macaulayness of symbolic powers}

\subsection{Cohen--Macaulay properties of symbolic powers}
In the previous section, we proved that 
all symbolic powers of the edge ideal of a disjoint union of 
finitely many complete graphs are Cohen--Macaulay.  
In this section, we prove the converse. 
That is, the main purpose of this section is to prove Theorem \ref{Serre2}. 
Using these results, we prove the first main theorem.   
Moreover, as an application, we also prove an improvement of the main theorem 
\cite{CRTY} with respect to Cohen--Macaulayness of ordinary powers. 

\begin{thm} \label{Serre2}
Let $G$ be a graph which is not a disjoint union of finitely many complete graphs. 
Then for any $\ell \ge 3$, 
$S/I(G)^{(\ell)}$ does not satisfy Serre's condition $(S_2)$. 
\end{thm}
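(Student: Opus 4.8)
The plan is to locate the failure of $(S_2)$ at the irrelevant maximal ideal $\frm$, by proving that $\depth S/I(G)^{(\ell)}\le 1$ while $\dim S/I(G)^{(\ell)}\ge 2$. Since $G$ is not a disjoint union of complete graphs, it has an induced path on three vertices, say $1,2,3$ with $\{1,2\},\{2,3\}\in E(G)$ and $\{1,3\}\notin E(G)$. Then $\{1,3\}$ is an independent set, so $\dim S/I(G)\ge 2$, and as symbolic powers do not change the minimal primes, $\dim S/I(G)^{(\ell)}=\dim S/I(G)\ge 2$. Moreover $\Ass(S/I(G)^{(\ell)})=\Min(S/I(G))$, so $\frm$ is not associated and $H^0_{\frm}(S/I(G)^{(\ell)})=0$. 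If $S/I(G)^{(\ell)}$ were $(S_2)$, then at $P=\frm$ the definition would force $\depth\ge\min\{2,\dim\}=2$, i.e.\ $H^1_{\frm}(S/I(G)^{(\ell)})=0$. Hence everything reduces to proving $H^1_{\frm}(S/I(G)^{(\ell)})\ne 0$.

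To detect this I would feed Takayama's formula (Lemma \ref{Takayama}) with degree $i=1$. Writing $\Delta=\Delta(G)$ for the independence complex, so that $I_{\Delta}=I(G)$, Remark \ref{MIT=TAK} identifies $\Delta_{\bf a}(I(G)^{(\ell)})=\Delta_{\bf a}^{(\ell)}$ for every ${\bf a}\in\bbN^n$; for such ${\bf a}$ one has $G_{\bf a}=\emptyset\in\Delta$, and Lemma \ref{Takayama} then reads $\dim_K H^1_{\frm}(S/I(G)^{(\ell)})_{\bf a}=\dim_K\widetilde{H}_0(\Delta_{\bf a}^{(\ell)})$. Thus the theorem reduces to a purely combinatorial statement: for each $\ell\ge 3$ I must exhibit a vector ${\bf a}\in\bbN^n$ for which the subcomplex $\Delta_{\bf a}^{(\ell)}=\langle F\in\mathcal{F}(\Delta):\sum_{t\in V\setminus F}a_t\le\ell-1\rangle$ is disconnected, so that $\widetilde{H}_0\ne 0$.

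Constructing such an ${\bf a}$ is the heart of the matter, and this is where the hypothesis $\ell\ge 3$ enters. If $\Delta(G)$ is already disconnected — as happens for the path itself or for $C_4$ — then ${\bf a}=0$ suffices, because $\Delta_0^{(\ell)}=\Delta$. The real work is when $\Delta(G)$ is connected, and the point is that the slack $\ell-1\ge 2$ is exactly what lets one keep two disjoint families of facets while discarding every facet that would join them. As a representative computation, for $G=C_5$ with $\ell=3$ the facets of $\Delta$ are the five non-edges, and the weight ${\bf a}=(0,0,1,2,1)$ keeps precisely $\{3,5\}$, $\{2,4\}$, $\{1,4\}$ while killing $\{1,3\}$ and $\{2,5\}$, so $\Delta_{\bf a}^{(3)}$ breaks into the two pieces $\{3,5\}$ and $\{1,2,4\}$; an analogous choice handles every $\ell\ge 3$.

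The main obstacle is making this disconnection uniform over all admissible $G$. The difficulty is structural: the facets containing the whole independent pair $\{1,3\}$ have the smallest defect $\sum_{t\in V\setminus F}a_t$, so they survive whenever the one-sided facets do, and a naive cut ``separating $1$ from $3$'' is therefore impossible — one must cut along a different pair of facets, as in the $C_5$ example, and rule out bridging through the remaining vertices. To control the latter I would first use that $(S_2)$ localizes: localizing $S/I(G)^{(\ell)}$ at the monomial prime $(x_j:j\notin F)$ attached to an independent set $F$ disjoint from $N[\{1,2,3\}]$ replaces $G$ by the link $\Delta(G\,|_{V\setminus N[F]})$ and commutes with forming symbolic powers, reducing to induced subgraphs in which the path is as isolated as possible. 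In the residual cases, where no such $F$ exists (for instance when $N[\{1,2,3\}]=V(G)$, as for $C_5$), one writes ${\bf a}$ down directly. Verifying that a separating ${\bf a}$ exists for every such graph and every $\ell\ge 3$ is the step I expect to demand the most care.
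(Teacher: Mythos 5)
Your proposal has a genuine gap, in two places. First, the opening reduction is not justified: $(S_2)$ is a condition at \emph{all} primes, and its failure need not be visible at $\frm$ as $\depth S/I(G)^{(\ell)}\le 1$. Your argument only shows that \emph{if} $H^1_{\frm}(S/I(G)^{(\ell)})\ne 0$ then $(S_2)$ fails; you give no reason why, for an arbitrary $G$ containing an induced path $x_2$--$x_1$--$x_3$, some $\Delta_{\bf a}^{(\ell)}$ with ${\bf a}\in\bbN^n$ must be disconnected. (Note that facets containing both $x_2$ and $x_3$ always have the smallest defect among the relevant facets, as you yourself observe, so the naive cut fails; and when $\Delta(G)$ is connected there is no a priori reason a disconnecting weight exists at the top localization.) You propose to repair this by localizing at monomial primes attached to independent sets disjoint from $N[\{x_1,x_2,x_3\}]$ -- which is legitimate, since symbolic powers of monomial primes localize to links -- but this only reduces to the class of graphs with a dominating induced $P_3$, which is still wild, and for that class you offer no construction beyond the single instance $G=C_5$, $\ell=3$ (that computation is correct, but it is one example). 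The sentence "an analogous choice handles every $\ell\ge 3$" and the final paragraph are precisely the content of the theorem, and they are left unproved; in particular nothing in your sketch isolates \emph{where} $\ell\ge 3$ is actually used, beyond the heuristic that the slack $\ell-1\ge 2$ should help.

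For comparison, the paper sidesteps both difficulties by polarizing: since polarization preserves $(S_k)$ (Murai--Terai), it suffices to find a \emph{face} $F_0$ of the polarized complex $\Delta^{(\ell)}$ whose link fails to be connected in codimension $1$. Lemma \ref{pol-lemma} gives the primary decomposition of $((x_1,\ldots,x_h)^{\ell})^{\mathrm{pol}}$, hence an explicit description of $\mathcal{F}(\Delta^{(\ell)})$ (Corollary \ref{facets-ell}), and the face $F_0$ is written down uniformly in $G$ using only the induced path $x_2$--$x_1$--$x_3$: the facets of $\link_{\Delta^{(\ell)}}(F_0)$ split into a type containing $x_3^{(1)}$ and $x_2^{(\ell-2)}$ and a type containing $x_1^{(\ell-1)}$, and no chain of facets with codimension-one intersections can pass between them. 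The hypothesis $\ell\ge 3$ enters at one sharp point -- ruling out the mixed facets $\widetilde{F}_{12},\widetilde{F}_{13}$ via the inequality $2\ell-3\ge\ell$ -- rather than diffusely through a case analysis of graphs. In effect, the paper moves the disconnection from the $\bbZ^n$-graded pieces of $H^1_{\frm}$ (your target, which depends delicately on the global structure of $G$) to a local link after polarization, which depends only on the three vertices of the path; this is what makes the argument uniform and is the idea missing from your plan.
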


\begin{remark}
The assumption that $\ell \ge 3$ is essential. 
For example, let $G$ be a pentagon, and set $I(G) = (x_1x_2,x_2x_3,x_3x_4,x_4x_5,x_5x_1)$ 
in $S=K[x_1,x_2,x_3,x_4,x_5]$. 
Then $I(G)$ is not complete intersection, but $S/I(G)^{(2)} = S/I(G)^2$ is Cohen--Macaulay. 
\end{remark}

\par \vspace{2mm}
In order to study Cohen--Macaulayness of 
higher symbolic powers of edge ideals, we use the notion of polarization. 
Let $I$ be a monomial ideal of $S$, 
and let $I^{\mathrm{pol}} \subseteq S^{\mathrm{pol}}$ denote the polarization 
of $I$. 

\par 
Let $G$ be a graph with vertex set $V(G)=\{x_1,x_2,\ldots,x_n\}$. 
Let $\Delta=\Delta(G)$ be the complementary 
simplicial complex of $G$.  
For a positive integer $\ell$, let $\Delta^{(\ell)}$ be the simplicial complex 
such that $I_{\Delta^{(\ell)}}=(I(G)^{(\ell)})^{\mathrm{pol}}$. 

\par 
For a positive integer $\ell$ and for any fixed $i$, 
we put $(x_i^{\ell})^{\mathrm{pol}} 
=x_i^{(0)}x_i^{(1)}\cdots x_i^{(\ell-1)}$, 
where $x_i^{(0)} = x_i$. 
Furthermore, we put 
$(x_1^{\ell_1}\cdots x_n^{\ell_n})^{\mathrm{pol}}
=(x_1^{\ell_1})^{\mathrm{pol}}\cdots (x_n^{\ell_n})^{\mathrm{pol}}$.
See Section 2 for more details.   
In order to study facets of $\Delta^{(\ell)}$, we need the following lemma. 

\begin{lemma} \label{pol-lemma}
Under the above notation, we have 
\[
((x_1,\ldots,x_h)^{\ell})^{\mathrm{pol}} 
=
\bigcap_{
i_1+\cdots + i_h \le \ell-1} 
(x_1^{(i_1)},\ldots,x_{h}^{(i_h)}).
\]
\end{lemma}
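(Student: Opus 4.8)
The plan is to establish the identity by comparing the monomials contained in each side. First I would observe that the left-hand side $((x_1,\ldots,x_h)^{\ell})^{\mathrm{pol}}$ is a squarefree monomial ideal by the very definition of polarization, while the right-hand side is an intersection of monomial prime ideals, hence radical and therefore also squarefree. Since a squarefree monomial ideal is determined by the squarefree monomials it contains---a general monomial belongs to it exactly when its squarefree part does---it is enough to test membership of an arbitrary squarefree monomial $m$ in the polarized variables $x_k^{(j)}$ $(1 \le k \le h,\ 0 \le j \le \ell-1)$. To record $m$, for each block $k$ I set $T_k = \{\, j : x_k^{(j)} \mid m \,\}$ and introduce $s_k = \min\{\, j \ge 0 : j \notin T_k \,\}$, the first index missing from $T_k$. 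The crux of the proof is the claim that membership of $m$ in either side is equivalent to the single inequality $s_1 + \cdots + s_h \ge \ell$.

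For the left-hand side, $m$ belongs to $((x_1,\ldots,x_h)^{\ell})^{\mathrm{pol}}$ precisely when $m$ is divisible by the polarization $x_1^{(0)}\cdots x_1^{(a_1-1)}\cdots x_h^{(0)}\cdots x_h^{(a_h-1)}$ of some minimal generator $x_1^{a_1}\cdots x_h^{a_h}$ with $a_1+\cdots+a_h=\ell$. This divisibility holds if and only if the initial segment $\{0,\ldots,a_k-1\}$ lies in $T_k$ for every $k$, that is, if and only if $a_k \le s_k$. Hence $m$ lies on the left exactly when there is a choice of integers $0 \le a_k \le s_k$ with $\sum_k a_k = \ell$; since the attainable values of $\sum_k a_k$ fill the whole interval from $0$ to $\sum_k s_k$, such a choice exists if and only if $\sum_k s_k \ge \ell$.

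For the right-hand side, $m$ lies in a prime $(x_1^{(i_1)},\ldots,x_h^{(i_h)})$ if and only if $x_k^{(i_k)} \mid m$ for some $k$, i.e. $i_k \in T_k$ for some $k$; thus $m$ avoids this prime exactly when $i_k \notin T_k$ for all $k$. Consequently $m$ fails to lie in the intersection if and only if there is a tuple $(i_1,\ldots,i_h)$ with $\sum_k i_k \le \ell-1$ and $i_k \notin T_k$ for every $k$. Because $\{0,\ldots,s_k-1\}\subseteq T_k$, any escaping index satisfies $i_k \ge s_k$, so the minimal possible value of $\sum_k i_k$ over such tuples is $\sum_k s_k$, attained at $i_k = s_k$. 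Therefore an escaping tuple exists if and only if $\sum_k s_k \le \ell-1$, which is to say $m$ belongs to the right-hand side if and only if $\sum_k s_k \ge \ell$. This matches the condition found on the left and yields the asserted equality.

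The step I expect to demand the most care is the pair of optimization arguments feeding into the common inequality: on the left, that the capacities $s_k$ admit an allocation with total $\ell$ exactly when $\sum_k s_k \ge \ell$; and on the right, that minimizing $\sum_k i_k$ over escaping tuples forces each $i_k = s_k$. One must also verify that the index ranges cause no trouble, but this is automatic: if $\sum_k s_k \le \ell-1$ then each $s_k \le \ell-1$, so the variables $x_k^{(s_k)}$ exhibiting an escaping tuple genuinely exist in the polarized ring, whereas if $\sum_k s_k \ge \ell$ the monomial $m$ lies in both ideals and no witness is needed.
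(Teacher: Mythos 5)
Your proof is correct, but it follows a genuinely different route from the paper's. The paper argues at the level of ideal containments: it shows that $((x_1,\ldots,x_h)^{\ell})^{\mathrm{pol}} \subseteq (x_1^{(i_1)},\ldots,x_h^{(i_h)})$ holds if and only if $i_1+\cdots+i_h\le \ell-1$, exhibiting (when $\sum_k i_k\ge \ell$) the witness monomial $\prod_{k}x_k^{(0)}x_k^{(1)}\cdots x_k^{(i_k-1)}$ --- which is precisely your extremal case $T_k=\{0,\ldots,i_k-1\}$ --- and then invokes the fact that the squarefree (hence radical) ideal on the left equals the intersection of the primes of this shape containing it, i.e.\ it reads off the primary decomposition. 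You instead test an arbitrary squarefree monomial $m$ against both sides and show that membership in each is equivalent to the single inequality $s_1+\cdots+s_h\ge \ell$, where $s_k$ is the first superscript missing from block $k$. Both proofs turn on the same combinatorial dichotomy, but yours is more self-contained: it delivers the two inclusions simultaneously and sidesteps the step left implicit in the paper, namely that every minimal prime of the polarized ideal is of the displayed form (one variable per block, superscripts summing to at most $\ell-1$), which would otherwise need a short vertex-cover argument. Your two optimization steps are sound: divisibility by a polarized generator forces initial segments, so the capacities are exactly $s_k$ and an allocation $0\le a_k\le s_k$ with $\sum_k a_k=\ell$ exists precisely when $\sum_k s_k\ge \ell$; and any escaping tuple satisfies $i_k\ge s_k$, so the minimum of $\sum_k i_k$ is attained at $i_k=s_k$. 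What the paper's approach buys in exchange is brevity given standard facts about radical monomial ideals, and it displays directly the decomposition into primes that is then exploited in Corollary~\ref{facets-ell}.
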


\begin{proof}
By the definition of polarization, we have 
\begin{eqnarray*}
((x_1,\ldots,x_h)^{\ell})^{\mathrm{pol}} 
& = & \big(
x_1^{j_1}\cdots x_h^{j_h} \; :\; j_1,,\ldots,j_h \ge 0,\; 
j_1+\cdots +j_h = \ell \big)^{\mathrm{pol}} \\
& = & \big(\prod_{k=1}^h x_k^{(0)} x_k^{(1)} \cdots x_k^{(j_k-1)} 
\;:\; j_1,\ldots,j_h \ge 0,\; j_1 + \cdots + j_h = \ell \big). 
\end{eqnarray*}
So, in order to obtain the required primary decomposition, it suffices to show that 
\[
((x_1,\ldots,x_h)^{\ell})^{\mathrm{pol}} \subseteq (x_1^{(i_1)},\ldots,x_{h}^{(i_h)})
\Longleftrightarrow 
i_1 +  \cdots + i_h \le \ell-1.
\]
Suppose $i_1 +  \cdots + i_h \ge \ell$. 
Take a monomial $M = \prod_{k=1}^h x_k^{(0)}x_k^{(1)}\cdots x_k^{(i_k-1)}$. 
Then it is clear that $M \notin (x_1 ^{(i_1)},\ldots,x_{h}^{(i_h)})$.
On the other hand, $M$ is contained in $((x_1,\ldots,x_h)^{\ell})^{\mathrm{pol}}$
because there exists a sequence $(j_1,\ldots,j_h)$ such that 
$0 \le j_k \le i_k$ for each $k$ and $j_1 + \cdots + j_h = \ell$.  
\par 
Next suppose that $i_1 +  \cdots + i_h \le \ell-1$. 
If $((x_1,\ldots,x_h)^{\ell})^{\mathrm{pol}} \not \subseteq (x_1^{(i_1)},\ldots,x_{h}^{(i_h)})$, 
then there exists a monomial 
$M = \prod_{k=0}^h x_k^{(0)}\cdots x_k^{(j_k-1)}$ with $j_1 + \cdots + j_h=\ell$ 
such that $M$ is not contained in 
$(x_1^{(i_1)},\ldots,x_{h}^{(i_h)})$. 
Hence $j_k \le i_k$ for each $k$. 
But $\ell=j_1 + \cdots + j_h \le i_1+\cdots+ i_h \le \ell-1$. This is a contradiction.  
\end{proof}

\par 
By the above lemma, we get the following corollary. 

\begin{cor} \label{facets-ell}
Under the above notation, we set $V^{(i)} = \{x_1^{(i)},\ldots,x_n^{(i)}\}$ for 
each $i=1,2,\ldots,\ell-1$. 
Then $\mathcal{F}(\Delta^{(\ell)})$ consists of 
the following subsets of $V \cup V^{(1)} \cup \cdots \cup V^{(\ell-1)}:$
\begin{eqnarray*}
&& \big(F \cup \big\{x_{i_{1,1}},\ldots,x_{i_{1,j_1}},x_{i_{2,1}},\ldots,x_{i_{2,j_2}},\ldots,
x_{i_{\ell-1,1}},\ldots,x_{i_{\ell-1, j_{\ell-1}}}\big\}\big) \\
&& \cup \big(V^{(1)} \setminus \big\{x_{i_{1,1}}^{(1)},\ldots,x_{i_{1,j_1}}^{(1)} \big\}\big) 
\cup \cdots \cup 
\big(V^{(\ell-1)} \setminus 
\big\{x_{i_{\ell-1,1}}^{(\ell-1)},\ldots,x_{i_{\ell-1,j_{\ell-1}}}^{(\ell-1)} \big\}\big),  
\end{eqnarray*}
where $F$ and $x_{i}$'s run through
\[
\begin{array}{cl}
\bullet & F \in \mathcal{F}(\Delta); \\
\bullet & 0 \le j_1,j_2,\ldots,j_{\ell-1} \le n, \quad j_1 + 2 j_2 + \cdots + (\ell-1)j_{\ell-1} \le \ell-1; \\
\bullet & \big\{x_{i_{1,1}},\ldots,x_{i_{\ell-1,j_{\ell-1}}}\big\} \cap F = \emptyset,\quad
\sharp \big\{x_{i_{1,1}},\ldots,x_{i_{\ell-1,j_{\ell-1}}}\big\} 
= j_1 + j_2 + \cdots + j_{\ell-1}. 
\end{array}
\]
\par 
In particular, if $\Delta$ is pure, then so is $\Delta^{(\ell)}$. 
\end{cor}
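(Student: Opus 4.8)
The plan is to read the facets of $\Delta^{(\ell)}$ directly off an explicit prime decomposition of $I_{\Delta^{(\ell)}} = (I(G)^{(\ell)})^{\mathrm{pol}}$. As a Stanley--Reisner ideal, $I(G) = I_\Delta = \bigcap_{F \in \mathcal{F}(\Delta)} P_F$, where $P_F = (x_k : x_k \notin F)$ is the minimal prime attached to the facet $F$. Since $I(G)$ is squarefree, the description of symbolic powers in subsection 1.5 gives $I(G)^{(\ell)} = \bigcap_{F \in \mathcal{F}(\Delta)} P_F^{\ell}$, and applying the polarization identity (\ref{inter}) termwise yields
\[
I_{\Delta^{(\ell)}} = (I(G)^{(\ell)})^{\mathrm{pol}} = \bigcap_{F \in \mathcal{F}(\Delta)} (P_F^{\ell})^{\mathrm{pol}}.
\]
Thus it suffices to decompose each $(P_F^{\ell})^{\mathrm{pol}}$ and collect the resulting minimal primes, each of which corresponds under the Stanley--Reisner dictionary to a facet (its complement in the full vertex set $V \cup V^{(1)} \cup \cdots \cup V^{(\ell-1)}$).

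Next I would feed each $P_F$ into Lemma \ref{pol-lemma}. Writing $V \setminus F = \{x_{k_1}, \ldots, x_{k_h}\}$ with $h = n - \sharp(F)$, the lemma gives
\[
(P_F^{\ell})^{\mathrm{pol}} = \bigcap_{i_1 + \cdots + i_h \le \ell-1} (x_{k_1}^{(i_1)}, \ldots, x_{k_h}^{(i_h)}),
\]
so the facet attached to a single exponent vector $(i_1,\ldots,i_h)$ is obtained by deleting $x_{k_1}^{(i_1)}, \ldots, x_{k_h}^{(i_h)}$. The bookkeeping step is to stratify these deletions by level: setting $j_s = \sharp\{t : i_t = s\}$ for $1 \le s \le \ell-1$ records how many deleted variables sit at level $s$. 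The constraint $\sum_t i_t \le \ell-1$ becomes exactly $j_1 + 2j_2 + \cdots + (\ell-1)j_{\ell-1} \le \ell-1$, and the vertices with $i_t \ge 1$ (deleted only at a positive level, hence retained at level $0$) assemble into the set $\{x_{i_{1,1}}, \ldots, x_{i_{\ell-1, j_{\ell-1}}}\}$ of cardinality $j_1 + \cdots + j_{\ell-1}$, disjoint from $F$. Reading off what survives at each level then reproduces the stated facet verbatim: at level $0$ one keeps $F$ together with these positively-deleted vertices, while at each level $s \ge 1$ one keeps $V^{(s)}$ minus the $j_s$ vertices deleted there.

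To conclude that the listed sets are \emph{precisely} $\mathcal{F}(\Delta^{(\ell)})$ — rather than an intersection that might be redundant — I would verify that all the primes just produced are pairwise incomparable, so each is genuinely minimal. Two primes attached to distinct facets $F \ne F'$ are incomparable because facets are incomparable: some $x_k \in V \setminus F$ fails to lie in $V \setminus F'$, so its chosen variable generates one prime but not the other; and two primes attached to the same $F$ with different exponent vectors are supported on distinct variable sets at the same vertices, hence also incomparable. For the purity statement, when $\Delta$ is pure with $\sharp(F) = d$ for all facets, every minimal prime has exactly $h = n - d$ generators, so every facet of $\Delta^{(\ell)}$ has the constant cardinality $n\ell - (n-d)$, and $\Delta^{(\ell)}$ is pure. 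I expect the only real friction to be the index-matching in the second paragraph — translating a single exponent vector $(i_1,\ldots,i_h)$ into the level-stratified data $(j_1,\ldots,j_{\ell-1})$ and the nested lists of $x_{i_{s,t}}$'s cleanly enough that the three displayed pieces of the facet fall out without off-by-one errors in the summation constraint.
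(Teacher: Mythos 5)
Your proposal is correct and takes essentially the same route as the paper's proof: both expand $I_{\Delta^{(\ell)}}=\bigl(\bigcap_{F\in\mathcal{F}(\Delta)}P_F^{\ell}\bigr)^{\mathrm{pol}}=\bigcap_{F}(P_F^{\ell})^{\mathrm{pol}}$ using the identity (\ref{inter}), apply Lemma \ref{pol-lemma} to each $(P_F^{\ell})^{\mathrm{pol}}$, and translate each exponent vector $(i_1,\ldots,i_h)$ into the level-stratified data $(j_1,\ldots,j_{\ell-1})$ to read off the facets as complements of the resulting primes. Your explicit verification that the primes are pairwise incomparable (hence exactly the minimal primes, so the list is irredundant) is a detail the paper leaves implicit, and it is argued correctly.
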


\begin{proof}
By definition, we have 
\[
 I_{\Delta^{(\ell)}} 
= \big((I(G))^{(\ell)}\big)^{\mathrm{pol}}
= \big(\bigcap_{F \in \mathcal{F}(\Delta)}\!\! P_F^{\ell}\big)^{\mathrm{pol}}
= \bigcap_{F \in \mathcal{F}(\Delta)}\!\! (P_F^{\ell})^{\mathrm{pol}}.
\]
If $P_F=(y_1,\ldots,y_h)$, then
\[
(P_F^{\ell})^{\mathrm{pol}} = \bigcap_{i_1+\cdots+i_h \le \ell-1} \!\! 
(y_{1}^{(i_1)},\ldots,y_h^{(i_h)})  
\]
by the above lemma. 
\par
Let $G \in \mathcal{F}(\Delta^{(\ell)})$. 
Then there exist 
a facet $F \in \mathcal{F}(\Delta)$ and 
integers $0 \le i_1 \le \ldots \le i_h$ 
with $i_1+\cdots + i_h \le \ell-1$ 
such that 
\begin{eqnarray*}
V \cup V^{(1)} \cup \cdots \cup V^{(\ell-1)} \setminus G 
&=& \{y_1^{(i_1)},\ldots,y_h^{(i_h)}\}, \\
V \setminus F &=& \{y_1,\ldots,y_h\}. 
\end{eqnarray*}
Putting
\[
\big\{y_1^{(i_1)},\ldots,y_h^{(i_h)}\big\} = 
\{x_{i_{0,1}}^{(0)},\ldots,x_{i_{0,j_0}}^{(0)},\ldots,
x_{i_{\ell-1,1}}^{(\ell-1)},\ldots, x_{i_{\ell-1,j_{\ell-1}}}^{(\ell-1)}\},
\]
we get a required form of $G$.  
\end{proof}

\begin{proof}[Proof of Theorem $\ref{Serre2}$]
Assume that $S/I(G)^{(\ell)}$ satisfies $(S_2)$. 
As $I(G)=\sqrt{I(G)^{(\ell)}}$, $S/I(G)$ also satisfies $(S_2)$ by \cite{HTT}. 
In particular, $I(G)$ is pure. 
Since some connected component of $G$ is not a complete graph by assumption, 
there exist $x_1,x_2,x_3 \in V(G)$ such that 
\[
 \{x_1, x_2\} ,\,\{x_1, x_3\} \in E(G), \quad \text{and} \; \{x_2,x_3\} \notin E(G). 
\]
We may assume that 
$V(G) = \{x_1,x_2,x_3,\ldots,x_n\}$, the vertex set of $G$ 
by renumbering if necessary.  
Let $\Delta=\Delta(G)$ be the complementary 
simplicial complex of $G$, and let $\Delta^{(\ell)}$ be the simplicial complex 
defined as above. 
Set $\widetilde{V} = V \cup V^{(1)} \cup \cdots \cup V^{(\ell-1)}$. 
Note that $\Delta^{(\ell)}$ is a pure simplicial complex on $\widetilde{V}$.  
\par \vspace{2mm}
Now consider the following subset of $\widetilde{V}:$  
\[
F_0 = \left\{
\begin{array}{ccccccc}
x_1, & x_1^{(1)}, & x_1^{(2)}, & \ldots & x_1^{(\ell-3)}, & x_1^{(\ell-2)}, & \square \\[2mm] 
x_2, & x_2^{(1)}, & x_2^{(2)}, & \ldots & x_2^{(\ell-3)}, & \square & x_2^{(\ell-1)}, \\[2mm] 
x_3, & \square & x_3^{(2)}, & \ldots & x_3^{(\ell-3)}, & x_3^{(\ell-2)}, & x_3^{(\ell-1)}, \\[2mm] 
\square & x_4^{(1)}, & x_4^{(2)}, & \ldots & x_4^{(\ell-3)}, & x_4^{(\ell-2)}, & x_4^{(\ell-1)},\\[2mm]
\vdots & \vdots & \vdots & \cdots & \vdots & \vdots & \vdots  \\[2mm] 
\square & x_n^{(1)}, & x_n^{(2)}, & \ldots & x_n^{(\ell-3)}, & x_n^{(\ell-2)}, & x_n^{(\ell-1)} 
\end{array}
\right\}. 
\] 
Then $F_0$ is a face of $\Delta^{(\ell)}$. 
Indeed, we can take a facet $F \in \mathcal{F}(\Delta)$ such that $\{x_2,x_3\} \subseteq F$. 
Since $x_1 \notin F$, 
\[
F' = (F \cup \{x_1\}) \cup V^{(1)} \cup V^{(2)} \cup \cdots \cup V^{(\ell-2)} \cup 
(V^{(\ell-1)} \setminus \{x_1^{(\ell-1)}\})
\]
is a facet of $\mathcal{F}(\Delta^{(\ell)})$ by Corollary \ref{facets-ell}. 
This implies that $F_0 \in \Delta^{(\ell)}$ because $F_0 \subseteq F'$.

\par 
We first prove the following claim: 
\begin{description}
\item[Claim] Any facet of $\link_{\Delta^{(\ell)}}(F_0)$ is given by 
\[
\begin{array}{cl}
& (F \setminus \{x_2,x_3\}) \cup \{x_3^{(1)}\} \cup \{x_2^{(\ell-2)}\}, 
\text{where}
\, F \in \FF(\Delta) \; \text{and} \; \{x_2,x_3\} \subseteq F; \\ 
\text{or} & \\
& (F \setminus \{x_1\}) \cup \{x_1^{(\ell-1)}\}, 
\text{where}\; 
 F \in \FF(\Delta) \; \text{and} \; x_1 \in F.  
\end{array}
\]
\end{description}
In order to prove the claim, it suffices to determine $\FF(\star_{\Delta^{(\ell)}}(F_0))$
because any facet $G$ of $\link_{\Delta^{(\ell)}}(F_0)$ can be written as 
$G = \widetilde{F} \setminus F_0$ 
for some $\widetilde{F} \in \FF(\star_{\Delta^{(\ell)}}(F_0))$. 
\par 
Let $\widetilde{F} \in \FF(\star_{\Delta^{(\ell)}}(F_0))$. 
Then $\widetilde{F} \in \FF(\Delta^{(\ell)})$ and 
$\widetilde{F} \supseteq F_0$. 
In particular, $x_i^{(1)},\ldots,x_i^{(\ell-1)} \in \widetilde{F}$ 
for each $i=4,\ldots,n$ and 
\[
 W_1 = V^{(1)} \setminus \{x_3^{(1)}\},
\quad  
 W_{\ell-2}=V^{(\ell-2)} \setminus \{x_2^{(\ell-2)}\},
\quad  
 W_{\ell-1}= V^{(\ell-1)} \setminus \{x_1^{(\ell-1)}\} \subseteq \widetilde{F}.
\] 
Hence $\widetilde{F}$ is given by one of the following complexes:
{\small
\[
\begin{array}{cclllllllll}
\widetilde{F}_1 &=& (F \cup \{x_1\}) & \cup & V^{(1)} & \cup & V^{(2)}  \cup \cdots \cup V^{(\ell-3)} & 
 \cup & V^{(\ell-2)} & \cup & W_{\ell-1}, \\[1mm]
\widetilde{F}_2 &=& (F \cup \{x_2\}) & \cup & V^{(1)} & \cup & V^{(2)}  \cup \cdots \cup V^{(\ell-3)} & 
 \cup & W_{\ell-2} & \cup & V^{(\ell-1)}, \\[1mm]
\widetilde{F}_3 &=& (F \cup \{x_3\}) & \cup & W_1 & \cup & V^{(2)}  \cup \cdots \cup V^{(\ell-3)} & 
 \cup & V^{(\ell-2)} & \cup & V^{(\ell-1)}, \\[1mm]
\widetilde{F}_{12} &=& (F \cup \{x_1,x_2\}) & \cup & V^{(1)} & \cup & V^{(2)}  \cup \cdots \cup V^{(\ell-3)} & 
 \cup & W_{\ell-2} & \cup & W_{\ell-1}, \\[1mm]
\widetilde{F}_{13} &=& (F \cup \{x_1,x_3\}) & \cup & W_{1} & \cup & V^{(2)}  \cup \cdots \cup V^{(\ell-3)} & 
 \cup & V^{(\ell-2)} & \cup & W_{\ell-1}, \\[1mm]
\widetilde{F}_{23} &=& (F \cup \{x_2,x_3\}) & \cup & W_{1} & \cup & V^{(2)}  \cup \cdots \cup V^{(\ell-3)} & 
 \cup & W_{\ell-2} & \cup & V^{(\ell-1)}. 
\end{array}
\]
}
Now suppose that $\widetilde{F} = \widetilde{F}_2$. 
Then we have $x_1,x_3 \in F$. 
This implies that $x_3 \notin P_F$. 
Hence, $x_1x_3 \in I(G)$ yields $x_1 \in P_F$. 
This contradicts $x_1 \in F$. 
Therefore it does not occur that $\widetilde{F} = \widetilde{F}_2$. 
Similarly, we have $\widetilde{F} \ne \widetilde{F}_3$. 
\par 
Next suppose that $\widetilde{F} = \widetilde{F}_{12}$.
Then $(\ell-2) j_{\ell-2} + (\ell-1) j_{\ell-1} \ge 2\ell-3 \ge \ell$ because $\ell \ge 3$. 
This is impossible. 
Hence $\widetilde{F} \ne \widetilde{F}_{12}$. 
Similarly, we have $\widetilde{F} \ne \widetilde{F}_{13}$. 
Consequently, either 
\[
\widetilde{F} = \widetilde{F}_1 \quad \text{and}\quad x_2,x_3 \in F,\quad x_1 \notin F  
\] 
or 
\[
\widetilde{F} = \widetilde{F}_{23} \quad \text{and}\quad x_1 \in F,
\quad x_2,x_3 \notin F 
\]
holds. 
In other words, any $G \in \FF(\link_{\Delta^{(\ell)}}(F_0))$ can be written as 
\[
G' = (F \setminus \{x_2,x_3\}) \cup \{x_3^{(1)}\} \cup \{x_2^{(\ell-2)}\}  
\]
for some $F \in \FF(\Delta)$ such that $x_1 \notin F$ and $x_2,\,x_3 \in F$; 
or 
\[
G'' = (F \setminus \{x_1\}) \cup \{x_1^{(\ell-1)}\}  
\]
for some $F \in \FF(\Delta)$ such that $x_1 \in F$ and $x_2,x_3 \notin F$.  
So, we proved the claim.  
\par \vspace{2mm}
Choose $G'$ and $G''$ of the above type, respectively.  
Note that there exist those facets as $(x_1x_2,x_1x_3) \subseteq I_{\Delta}$. 
Then one can find no chain of facets in $\link_{\Delta^{(\ell)}}(F_0)$ such that 
\[
 G'=G_0,\,G_1, \ldots, G_r = G''
\]
with $\sharp(G_i \cap G_{i-1}) = d-1$, where 
$d = \dim K[\link_{\Delta^{(\ell)}}(F_0)]$ since 
both $x_3^{(1)}$ and $x_2^{(\ell-2)}$ are 
contained in $G'$ but not in $G''$. 
Thus $\link_{\Delta^{(\ell)}}(F_0)$ is \textit{not} connected in codimension $1$, 
and hence it does \textit{not} satisfy $(S_2)$.  
By the lemma below, we can conclude that $S/I(G)^{(\ell)}$ does \textit{not} 
satisfy $(S_2)$, as required.  
\end{proof}

\par 
The following lemma was used in the proof of \cite[Theorem 4.1]{MuT}. 
Moreover, it is clear that $S/I$ is Cohen--Macaulay if and only if 
so is $S^{\mathrm{pol}}/I^{\mathrm{pol}}$ because 
$S/I$ is isomorphic to a quotient of $S^{\mathrm{pol}}/I^{\mathrm{pol}}$
by a regular sequence. 

\begin{lemma}[\textrm{See the proof of \cite[Theorem 4.1]{MuT}}]
Let $k \ge 1$ be any integer. 
Let $I \subseteq S$ be a monomial ideal, 
and let $I^{\mathrm{pol}} \subseteq S^{\mathrm{pol}}$ 
denote the polarization of $I$. 
If $S/I$ satisfies $(S_k)$, then so does $S^{\mathrm{pol}}/I^{\mathrm{pol}}$. 
\end{lemma}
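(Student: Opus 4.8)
The plan is to reduce to one step of polarization and then to recast Serre's condition cohomologically. Since $S^{\mathrm{pol}}$ is built from $S$ by adjoining one new variable at a time, it suffices to treat a single step: we may assume $S^{\mathrm{pol}}=S[y]$ for one new variable $y$, that $\theta:=y-x$ is a nonzerodivisor on $B:=S[y]/I^{\mathrm{pol}}$ (where $x$ is the earlier copy of the variable that $y$ splits off), and that $B/\theta B\cong A:=S/I$; the general case follows by applying the single-step result repeatedly along the chain of partial polarizations $S/I=B_0,B_1,\dots,B_c=S^{\mathrm{pol}}/I^{\mathrm{pol}}$, in which $B_m/\theta_m B_m\cong B_{m-1}$ for a nonzerodivisor $\theta_m$. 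Note that the corresponding statement for Cohen--Macaulayness is immediate, precisely because $\theta$ is a single nonzerodivisor with $B/\theta B\cong A$; the entire difficulty is that $(S_k)$ must be verified at every prime of $B$, including those not containing $\theta$.

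First I would invoke the standard cohomological description of $(S_k)$: for a finitely generated graded module $M$ over a polynomial ring with graded maximal ideal $\frm$ and $d=\dim M$, one has that $M$ satisfies $(S_k)$ if and only if $\dim \Supp H_{\frm}^i(M)\le i-k$ for every $i<d$ (by graded local duality this is dual to a statement about $\Ext^{\bullet}(M,\omega)$, and the two supports have the same dimension). Applying the long exact sequence of local cohomology to $0\to B(-1)\xrightarrow{\theta}B\to A\to 0$ and reading off the connecting maps shows that, for each $j$, the submodule $(0:_{H_{\frm}^j(B)}\theta)$ is a quotient of $H_{\frm}^{j-1}(A)$ up to a degree shift, whence
\[
\dim \Supp (0:_{H_{\frm}^j(B)}\theta)\ \le\ \dim \Supp H_{\frm}^{j-1}(A).
\]

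The crux, and the step I expect to be the main obstacle, is the complementary inequality
\[
\dim \Supp H_{\frm}^j(B)\ \le\ \dim \Supp (0:_{H_{\frm}^j(B)}\theta)+1 \qquad (0<j<\dim B),
\]
which expresses that the polarization variable is \emph{cohomologically transversal}: the top-dimensional part of the support of each lower local cohomology module of $B$ is already annihilated by $\theta$. The naive estimate only gives $\dim \Supp H_{\frm}^j(B)\le \dim \Supp H_{\frm}^j(A)+1$, which loses one unit and would yield merely $(S_{k-1})$; the content of the lemma is exactly that this loss does not occur. To establish the sharp inequality I would use that $I^{\mathrm{pol}}$ is a squarefree monomial ideal, so $B=K[\Delta^{(\ell)}]$ is a Stanley--Reisner ring whose local cohomology is computed, via Hochster's formula (a special case of Takayama's formula, Lemma \ref{Takayama}), from the reduced homology of the links of $\Delta^{(\ell)}$; the explicit list of facets in Corollary \ref{facets-ell} describes these links in terms of the links of the complex underlying $A$ and pins the support of each $H_{\frm}^j(B)$ inside $V(\theta)$ in top dimension. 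Granting this, combining the two displayed inequalities with $\dim \Supp H_{\frm}^{j-1}(A)\le (j-1)-k$ (valid for $j-1<\dim A$ since $A$ satisfies $(S_k)$) gives $\dim \Supp H_{\frm}^j(B)\le j-k$ for all $0<j<\dim B$, while the case $j=0$ is covered by $\depth B=\depth A+1\ge 1$. By the cohomological criterion, $B$ satisfies $(S_k)$, completing the induction.
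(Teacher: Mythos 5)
Your overall architecture --- reduce to a single polarization step with $\theta=y-x$ regular on $B$ and $B/\theta B\cong A$, then push Schenzel's cohomological criterion for $(S_k)$ through the long exact sequence of $0\to B(-1)\xrightarrow{\theta}B\to A\to 0$ --- is exactly the argument of Murai--Terai that the paper cites instead of reproving (the paper offers no proof beyond the reference to the proof of \cite[Theorem 4.1]{MuT}). Your first displayed inequality and the final bookkeeping are correct. The genuine gap sits at the step you yourself flag as the crux: the inequality $\dim\Supp H^j_{\frm}(B)\le \dim\Supp\big(0:_{H^j_{\frm}(B)}\theta\big)+1$ is asserted and then ``granted'', and the route you sketch for it would not work. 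Corollary \ref{facets-ell} describes facets only of polarizations of symbolic powers of edge ideals, whereas the lemma concerns an \emph{arbitrary} monomial ideal $I$; moreover $A=S/I$ is in general not squarefree, so there is no ``complex underlying $A$'' whose links Hochster's formula could see, and nothing in that facet list pins the top-dimensional support of $H^j_{\frm}(B)$ inside the hypersurface defined by $\theta$. So as written, the central claim of your proof is unestablished, and the only justification offered is inapplicable in the lemma's generality.

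Fortunately, no ``cohomological transversality'' is needed: the inequality is elementary once you pass to Matlis duals, which your criterion already forces you to do (an artinian module has support $\{\frm\}$, so $\dim\Supp$ must mean the dimension of the dual, i.e.\ of the finitely generated module $E:=H^j_{\frm}(B)^{\vee}\cong \Ext^{\,n+1-j}_{S[y]}(B,\omega_{S[y]})$, where $n=\dim S$). Dualizing, $\big(0:_{H^j_{\frm}(B)}\theta\big)^{\vee}\cong E/\theta E$, and for any finitely generated graded module $E$ and linear form $\theta$ one has $E/\theta E\ne 0$ whenever $E\ne 0$ (graded Nakayama) and $\dim E\le \dim E/\theta E+1$ (Krull); no feature of polarization enters at all. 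With this fix your argument closes and coincides with the cited proof: by the long exact sequence and the Rees isomorphism $\Ext^{\,m+1}_{S[y]}(A,\omega_{S[y]})\cong \Ext^{\,m}_{S}(A,\omega_S)$, the module $E/\theta E$ embeds (up to shift) into $\Ext^{\,n-(j-1)}_S(A,\omega_S)\cong H^{j-1}_{\frm}(A)^{\vee}$, giving $\dim E\le (j-1)-k+1=j-k$. One residual caveat: Schenzel's criterion in the form you state it requires unmixedness and fails for $k=1$ --- e.g.\ $S/(xy,xz)$ over $K[x,y,z]$ satisfies $(S_1)$, yet $\dim \Ext^{2}(S/(xy,xz),\omega)=1>1-1$ --- so you should either restrict to $k\ge 2$ (where $(S_2)$ forces equidimensionality, which covers the paper's application of the lemma) or treat $(S_1)$, i.e.\ absence of embedded primes, by a separate direct argument.
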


\par 
We are now ready to prove the first main theorem in this paper. 

\begin{thm} \label{Main-CM}
Let $I(G)\subseteq S$ be the edge ideal of a graph $G$. 
Then the following conditions are equivalent$:$ 
\begin{enumerate}
 \item $S/I(G)^{(\ell)}$ is Cohen--Macaulay for every integer $\ell \ge 1$. 
 \item $S/I(G)^{(\ell)}$ is Cohen--Macaulay for some $\ell \ge 3$. 
 \item $S/I(G)^{(\ell)}$ satisfies Serre's condition $(S_2)$ for some $\ell \ge 3$. 
 \item $G$ is a disjoint union of finitely many complete graphs. 
\end{enumerate}   
\end{thm}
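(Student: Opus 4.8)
The plan is to prove the four conditions equivalent by establishing the cyclic chain of implications $(4) \Rightarrow (1) \Rightarrow (2) \Rightarrow (3) \Rightarrow (4)$. The two substantial ingredients have already been proved: Theorem \ref{SPcomplete} supplies the positive direction, and Theorem \ref{Serre2} supplies the negative direction. Everything else in the cycle is formal, so the argument amounts to packaging these two results together.

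For $(4) \Rightarrow (1)$ I would simply invoke Theorem \ref{SPcomplete}. If $G$ is a disjoint union of finitely many complete graphs $K_{n_1} \coprod \cdots \coprod K_{n_r}$, then $S/I(G)^{(\ell)}$ is Cohen--Macaulay for every $\ell \ge 1$; here isolated vertices are permitted and counted as copies of $K_1$, which is harmless since Remark \ref{rem-PowerRestrict} allows $n_i = 1$. The implications $(1) \Rightarrow (2)$ and $(2) \Rightarrow (3)$ are immediate: the first is the specialization $\ell = 3$, and the second uses the standard fact that a Cohen--Macaulay ring satisfies Serre's condition $(S_k)$ for every $k$, hence in particular $(S_2)$, witnessed by the same $\ell \ge 3$.

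Finally, $(3) \Rightarrow (4)$ is precisely the contrapositive of Theorem \ref{Serre2}. Indeed, that theorem asserts that if $G$ is \emph{not} a disjoint union of finitely many complete graphs, then $S/I(G)^{(\ell)}$ fails $(S_2)$ for every $\ell \ge 3$. So if some $\ell \ge 3$ makes $S/I(G)^{(\ell)}$ satisfy $(S_2)$, then $G$ must be a disjoint union of complete graphs. This closes the cycle and completes the proof.

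The assembly step itself presents \emph{no} real obstacle: all of the difficulty is already concentrated in Theorem \ref{SPcomplete} (proved via the K\"unneth and Mayer--Vietoris analysis of the subcomplexes $\Delta_{\bf a}^{(\ell)} = \bigcup_k \Gamma_{{\bf a}_1}^{(\ell+1-k)} * \Lambda_{{\bf a}_2}^{(k)}$ in Lemma \ref{Key}) and in Theorem \ref{Serre2} (proved via polarization and the explicit description of $\link_{\Delta^{(\ell)}}(F_0)$, which is shown to fail connectedness in codimension one). The only point worth a sentence of care is that the degenerate case $\dim S/I(G) = 1$, i.e.\ $G = K_n$ a single complete graph, is subsumed under $(4) \Rightarrow (1)$ through the $r = 1$ instance of Theorem \ref{SPcomplete}, where $S/I(G)^{(\ell)}$ is Cohen--Macaulay of dimension one because the symbolic power has no embedded primes.
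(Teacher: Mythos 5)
Your proposal is correct and follows essentially the same route as the paper: the paper also establishes the cycle by citing Theorem \ref{SPcomplete} for $(4)\Rightarrow(1)$, noting $(1)\Rightarrow(2)$ and $(2)\Rightarrow(3)$ as immediate, and deducing $(3)\Rightarrow(4)$ as the contrapositive of Theorem \ref{Serre2}. Your extra remark on the one-dimensional case $G=K_n$ (handled in the paper's introduction via the absence of embedded primes, and implicitly via the $r=1$ case of Theorem \ref{SPcomplete}) is consistent with the paper's treatment.
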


\begin{proof}
Let $I(G) \subseteq S$ be the edge ideal with $\dim S/I(G) \ge 2$. 
\par \vspace{2mm} 
$(1) \Longrightarrow (2):$ 
This is clear. 
\par \vspace{2mm} 
$(2) \Longrightarrow (3):$ 
Since any Cohen--Macaulay ring satisfies 
Serre's condition $(S_2)$, it is clear. 
\par \vspace{2mm} 
$(3) \Longrightarrow (4):$
Now suppose that $G$ cannot be written as a disjoint union 
of finitely many complete graphs.   
Then, for any $\ell \ge 3$, $S/I(G)^{(\ell)}$ does not satisfy $(S_2)$ 
by Theorem \ref{Serre2}. This contradicts the assumption. 
\par \vspace{2mm} 
$(4) \Longrightarrow (1):$ 
By Theorem \ref{SPcomplete}, if $G$ is a disjoint union 
of finitely many complete graphs, then $S/I(G)^{(\ell)}$ is Cohen--Macaulay 
for every $\ell \ge 1$.  
\end{proof}

\begin{remark}
By a similar argument as in Corollary \ref{Main1-extend}, we can generalize 
the above theorem to the case where $I$ contains variables. 
Moreover, in this case, we can replace $S$ with $S[t]$, where $t$ is an indeterminate. 
\end{remark}

\par \vspace{2mm}
\subsection{Cohen--Macaulay properties of ordinary powers}
\par 
Using Theorem \ref{Main-CM}, we can give an improvement 
of the main theorem in \cite{CRTY}. 

\begin{thm}[\textrm{cf. \cite[Theorem 2.1]{CRTY}}] \label{Power-cor}
Let $I(G)$ be the edge ideal of a graph $G$. 
If $S/I(G)^{\ell}$ is Cohen--Macaulay for some $\ell \ge 3$, 
then $I(G)$ is complete intersection. 
\end{thm}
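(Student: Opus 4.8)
The plan is to reduce to the first main theorem (Theorem \ref{Main-CM}) and then extract the extra rigidity that comes from using ordinary rather than symbolic powers. First I would record that a Cohen--Macaulay quotient is unmixed and has no embedded associated primes; since $\Min(I(G)^{\ell}) = \Min(I(G))$, this forces $I(G)^{\ell} = I(G)^{(\ell)}$, and in particular $S/I(G)^{(\ell)}$ is Cohen--Macaulay. Because $\ell \ge 3$, Theorem \ref{Main-CM} then applies and yields that $G$ is a disjoint union of finitely many complete graphs, say $G = K_{n_1} \coprod \cdots \coprod K_{n_r}$.

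It then remains to rule out any component with $n_i \ge 3$; equivalently, to show that every $n_i \le 2$, so that $I(G)$ is generated by edge monomials in pairwise disjoint variables and is therefore a complete intersection (a regular sequence). The point I would exploit is that the ordinary power $I(G)^{\ell}$ is generated in degree exactly $2\ell$, so every monomial of $I(G)^{\ell}$ has degree $\ge 2\ell$. If some component contains a triangle $x_{11}, x_{12}, x_{13}$, I would produce a monomial lying in $I(G)^{(\ell)}$ but of degree strictly less than $2\ell$, contradicting the equality $I(G)^{\ell} = I(G)^{(\ell)}$ established above.

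Concretely, set $w = x_{11}x_{12}x_{13}$. Each minimal prime of $I(K_{n_1})$ omits one vertex, so $w$ has order at least $2$ at every such prime; consequently $w^{m} \in I(K_{n_1})^{(2m)}$. Taking $u = w^{m}$ when $\ell = 2m$ and $u = w^{m}x_{11}x_{12}$ when $\ell = 2m+1$, a short order count gives $\ord_{Q}(u) \ge \ell$ at every minimal prime $Q = P_{1 j_1} + \cdots + P_{r j_r}$ of $I(G)$, since the variables of $u$ meet only the first block and hence only the $P_{1 j_1}$-part of $Q$ is relevant. Thus $u \in I(G)^{(\ell)}$, while $\deg u \le \tfrac{3\ell+1}{2} < 2\ell$ for $\ell \ge 3$, so $u \notin I(G)^{\ell}$. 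This contradiction shows that all $n_i \le 2$ and completes the proof.

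The main obstacle I anticipate is the bookkeeping in the final step: verifying that the candidate monomial $u$ really has order $\ge \ell$ at every minimal prime $Q$, uniformly over the choice of omitted vertices $j_1, \ldots, j_r$, and confirming the strict degree inequality in both parity cases. Everything else is a routine assembly of the unmixedness reduction with the already-proved classification in Theorem \ref{Main-CM}.
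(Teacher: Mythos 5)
Your proof is correct and follows essentially the same route as the paper: Cohen--Macaulayness forces $I(G)^{\ell}=I(G)^{(\ell)}$ by unmixedness, Theorem \ref{Main-CM} then makes $G$ a disjoint union of complete graphs, and a triangle in some component yields a monomial of degree less than $2\ell=\indeg I(G)^{\ell}$ lying in $I(G)^{(\ell)}$, a contradiction. The paper packages this last step as Corollary \ref{power-not} via Lemma \ref{SVV}, whose witness is $x_1x_2x_3(x_1x_2)^{\ell-2}$ of degree $2\ell-1$ rather than your $w^{m}$ or $w^{m}x_{11}x_{12}$, but the order-counting and degree argument is the same.
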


\begin{remark}
In \cite{CRTY}, the authors proved an analogous theorem: 
$I(G)$ is complete intersection whenever 
$S/I(G)^{\ell}$ is Cohen--Macaulay for some $\ell \ge \height I(G)$.    
Note that it is not difficult to derive this from Theorem \ref{Power-cor}.  
\end{remark}

\par 
In order to prove the theorem, we need the following lemma. 

\begin{lemma}[\textrm{See also \cite[Lemma 5.8, Theorem 5.9]{SVV}}] \label{SVV} 
Let $I(G)$ be the edge ideal of a graph $G$. 
Let $t \ge 2$ be an integer.
Then the following conditions are equivalent. 
\begin{enumerate}
 \item $G$ contains no odd cycles of length $2s-1$ for any $2 \le s \le t$. 
 \item $I(G)^{(t)}=I(G)^t$ holds. 
\end{enumerate}
\end{lemma}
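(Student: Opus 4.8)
The starting point is the membership criterion for the two ideals. Since $I(G)$ is a squarefree monomial ideal, its minimal primes $P_1,\dots,P_r$ are generated by the minimal vertex covers of $G$, and the formula in Subsection 1.5 gives $I(G)^{(t)}=\bigcap_i P_i^{t}$. Hence a monomial $w=x^{a}$ with $a\in\NN^{n}$ lies in $I(G)^{(t)}$ if and only if $\sum_{k\in C}a_k\ge t$ for every minimal vertex cover $C$ of $G$; I write $\tau(a)$ for the minimum of these cover-sums, so that $w\in I(G)^{(t)}\iff\tau(a)\ge t$. On the other hand $I(G)^{t}$ is generated in degree $2t$, so every monomial of $I(G)^{t}$ has degree at least $2t$, and $w\in I(G)^{t}$ if and only if $w$ is divisible by a product of $t$ edges. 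The inclusion $I(G)^{t}\subseteq I(G)^{(t)}$ always holds, and the task is to decide when it is an equality.

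The implication $(2)\Rightarrow(1)$ I prove by contraposition, exhibiting an explicit witness. Suppose $G$ contains an odd cycle on vertices $x_1,\dots,x_{2s-1}$ with $2\le s\le t$, and set $m=x_1x_2\cdots x_{2s-1}$ and $w=m\,(x_1x_2)^{t-s}$. For any minimal vertex cover $C$ the set $C\cap\{x_1,\dots,x_{2s-1}\}$ is a vertex cover of the odd cycle, whose covering number is $s$, so it meets the cycle in at least $s$ vertices, each carrying exponent at least $1$ in $w$; moreover $C$ covers the edge $\{x_1,x_2\}$, so it contains $x_1$ or $x_2$, contributing an extra $t-s$. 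Thus the cover-sum is at least $s+(t-s)=t$, whence $w\in I(G)^{(t)}$. But $\deg w=(2s-1)+2(t-s)=2t-1<2t$, so $w\notin I(G)^{t}$, and therefore $I(G)^{(t)}\ne I(G)^{t}$.

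For $(1)\Rightarrow(2)$ the plan is an induction on $t$, peeling off one edge at a time. For $t=1$ there is nothing to prove, since $I(G)^{(1)}=I(G)=I(G)^{1}$. For the inductive step assume $G$ has no odd cycle of length $\le 2t-1$ (hence none of length $\le 2t-3$) and let $w=x^{a}\in I(G)^{(t)}$. Because $I(G)^{(t)}\subseteq I(G)$, some edge divides $w$. The key step is to find an edge $\{x_i,x_j\}$ dividing $w$ for which $w/(x_ix_j)\in I(G)^{(t-1)}$: granting this, the induction hypothesis (applicable since the odd-girth condition descends to $t-1$) gives $w/(x_ix_j)\in I(G)^{t-1}$, and then $w=(x_ix_j)\cdot\bigl(w/(x_ix_j)\bigr)\in I(G)\cdot I(G)^{t-1}=I(G)^{t}$, as required.

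It remains to produce such an edge, which is the heart of the matter and the step I expect to be the main obstacle. If $\tau(a)>t$, then any edge dividing $w$ works, since deleting an edge lowers every cover-sum by at most $2$ and leaves $\tau\ge t-1$. The delicate case is $\tau(a)=t$, where one must choose $\{x_i,x_j\}$ so that no cover of weight exactly $t$ contains both $x_i$ and $x_j$, so that the relevant cover-sums drop by only $1$. The obstruction to such a choice is exactly the integrality gap between maximum integral edge packings and minimum vertex covers of the graph weighted by $a$: by linear programming duality the optimal fractional matching and fractional cover values coincide and are at most $\tau(a)=t$, and by the half-integral structure of these polytopes the gap is created only by odd cycles, any of which would then have length at most $2t-1$ --- contradicting the hypothesis. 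Turning this into the desired edge (and the forbidden short odd cycle) is precisely the combinatorial content of \cite[Lemma 5.8, Theorem 5.9]{SVV}, which I would invoke to finish the induction.
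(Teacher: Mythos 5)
Your proof of $(2)\Rightarrow(1)$ is correct and coincides with the paper's: the same witness $x_1x_2\cdots x_{2s-1}(x_1x_2)^{t-s}$, membership in $I(G)^{(t)}$ because any prime containing all edges of an odd $(2s-1)$-cycle must contain at least $s$ of its vertices, and exclusion from $I(G)^t$ by the degree count $2t-1<2t=\indeg I(G)^t$.

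The direction $(1)\Rightarrow(2)$ has a genuine gap, and it sits exactly where you say it does: the existence, for a monomial $w=x^{a}\in I(G)^{(t)}$ with $\tau(a)=t$, of an edge $\{x_i,x_j\}$ dividing $w$ such that no cover of weight exactly $t$ contains both endpoints. Note that this ``good-edge'' claim is not a technical lemma but is \emph{equivalent} to the statement being proved (given the induction, $I^{(t)}=I^t$ holds if and only if every monomial of $I^{(t)}$ admits such a peeling), so nothing in your write-up actually carries the load. Your LP-duality sketch (half-integrality of the fractional matching/cover polytopes, gaps caused only by odd cycles) is a plausible strategy, but you do not execute it --- in particular the step from ``an odd cycle causes the integrality gap'' to ``that cycle has length at most $2t-1$'' needs an argument using the bound $\tau(a)=t$, which you never give. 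And the closing move, ``invoke \cite[Lemma 5.8, Theorem 5.9]{SVV} to finish,'' is doubly problematic: first, the lemma under review \emph{is} a per-$t$ refinement of those results (Theorem 5.9 of \cite{SVV} is the all-powers/bipartite statement), so citing them to finish is essentially circular; second, the paper explicitly flags a small gap in the final step of the proof of \cite[Lemma 5.8]{SVV} (an odd cycle is obtained only when a certain index is even), so a bare citation would import an unrepaired error. Moreover what \cite[Lemma 5.8]{SVV} actually supplies is a colon containment, namely $xI^m\cap(I^{m+1}\colon x)\subseteq I^{m+1}$, not your edge-peeling statement, so translating it into your induction is further unperformed work.

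For comparison, the paper's route is different in structure: it reduces $(1)\Rightarrow(2)$ to showing $\frm\notin\Ass_S(S/I^t)$ (embedded primes other than $\frm$ are handled by localizing, since localization of an edge ideal at a monomial prime yields variables plus the edge ideal of an induced subgraph, to which hypothesis (1) descends). If $I^t\colon M=\frm$ with $M\notin I^t$, writing $M=x_1x_2L$ for an edge $x_1x_2$ and using that $I$ is generated by squarefree monomials gives $M\in x_1I^{t-1}\cap(I^t\colon x_1)$, and the corrected SVV odd-walk analysis shows $xI^m\cap(I^{m+1}\colon x)\subseteq I^{m+1}$ for $0\le m\le t-1$ under (1), a contradiction. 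To salvage your version you would need to prove the good-edge claim directly, which is work of the same order as the paper's adaptation of \cite{SVV}, not a citation away.
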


\begin{proof} 
Put $I=I(G)$ for simplicity. 
\par 
$(1) \Longrightarrow (2):$ 
It follows from a similar argument as in the proof of 
\cite[Lemma 5.8, Theorem 5.9]{SVV}. 
But for the convenience of the readers, we give a sketch of the proof. 
It is enough to show that 
$\frm \notin \Ass_S(S/I^t)$ if $\dim S/I \ge 1$. 
Now suppose \textit{not}. 
Then we can take a monomial $M \notin I^t$ such that $I^t \colon M = \frm$. 
Since $\depth S/I \ge 1$, we get $M \in I$. 
So we can write $M = x_1x_2 L$ for some $x_1x_2 \in G(I)$ and a monomial $L$. 
By definition, we have $x_2M = x_1x_2^2L \in I^t$. 
It follows that $x_2^2L \in I^{t-1}$ because $I$ is generated by squarefree monomials. 
This yields $M \in x_1 I^{t-1} \cap (I^t :x_1)$. 
\par 
On the other hand, by a similar argument as in the proof of \cite[Lemma 5.8]{SVV}, 
we can show that $x I^m \cap (I^{m+1} \colon x) \subseteq I^{m+1}$ 
for any vertex $x$ and for all $0 \le m \le t-1$ using $(1)$ 
(Notice that there exists a small gap in the final step of the 
proof of \cite[Lemma 5.8]{SVV}.
That is, we obtain an odd cycle if only if $i$ is even.).
In particular, $M \in x_1 I^{t-1} \cap (I^t :x_1)\subseteq I^t$, which 
contradicts the choice of $M$. 
\par 
$(2) \Longrightarrow (1):$
Suppose that $G$ contains an odd cycle of length $2s-1$ with $2 \le s \le t$; 
say, $x_1x_2$, $x_2x_3, \ldots, x_{2s-2}x_{2s-1}$, $x_{2s-1}x_1$. 
Put $M = x_1x_2\cdots x_{2s-1}$. 
Then we show $M(x_1x_2)^{t-s} \in I^{(t)} \setminus I^t$. 
Let $P$ be any associated prime ideal of $I$. 
Then since $P$ is prime and $x_1x_2,x_2x_3,\ldots,x_{2s-2}x_{2s-1},x_{2s-1}x_1 \in P$, 
we get $\sharp(P \cap \{x_1,x_2,\ldots,x_{2s-1}\}) \ge s$. 
Hence $M \in P^{s}$ and thus $M(x_1x_2)^{t-s} \in I^{(t)}$. 
On the other hand, $M (x_1x_2)^{t-s} \notin I^t$ 
because $\deg M(x_1x_2)^{t-s} = 2t-1 < 2t=\indeg I^t$, 
where $\indeg I^t = \min\{m \in \mathbb{Z} \,:\, [I^t]_m \ne 0 \}$. 
\end{proof}

\begin{cor} \label{power-not}
Let $G$ be a disjoint union of complete graphs 
$K_{n_1},\ldots,K_{n_r}$. 
\par 
If $\max\{n_1,\ldots,n_r\} \ge 3$, 
then $I(G)^{(\ell)} \ne I(G)^{\ell}$ for every $\ell \ge 2$. 
In particular, $I(G)^{\ell}$ is not a Cohen--Macaulay ideal.   
\end{cor}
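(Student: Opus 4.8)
The plan is to combine Lemma \ref{SVV} with the elementary observation that the symbolic power captures exactly the minimal--prime part of the ordinary power, so that the two powers coincide if and only if there are no embedded primes.

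First I would produce an odd cycle. Since $\max\{n_1,\ldots,n_r\} \ge 3$, some complete graph $K_{n_i}$ has at least three vertices and hence contains a triangle, that is, an odd cycle of length $3 = 2\cdot 2 - 1$. Now fix any $\ell \ge 2$. Taking $s = 2$, which lies in the range $2 \le s \le \ell$, this triangle is an odd cycle of length $2s-1$, so condition $(1)$ of Lemma \ref{SVV} (applied with $t = \ell$) fails. By the lemma, condition $(2)$ fails as well, giving $I(G)^{(\ell)} \ne I(G)^{\ell}$ for every $\ell \ge 2$. This settles the first assertion.

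Next I would argue that $I(G)^{\ell}$ must carry an embedded associated prime. Writing $\{P_1,\ldots,P_m\} = \Min_S(S/I(G))$ and $W = S \setminus \bigcup_i P_i$, the defining formula $I(G)^{(\ell)} = I(G)^{\ell} S_W \cap S$ shows that passing from $I(G)^{\ell}$ to $I(G)^{(\ell)}$ discards precisely those primary components whose associated prime meets $W$. An embedded prime $Q$ of $I(G)^{\ell}$ strictly contains some minimal prime $P_j$, so it is contained in no minimal prime (otherwise $P_j \subsetneq Q \subseteq P_i$ would contradict minimality) and therefore meets $W$, whereas each $P_i$ does not. Hence $I(G)^{(\ell)}$ is exactly the intersection of the minimal--prime primary components of $I(G)^{\ell}$, and $I(G)^{\ell} = I(G)^{(\ell)}$ would hold precisely when $I(G)^{\ell}$ has no embedded prime. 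Since the previous step gives $I(G)^{(\ell)} \ne I(G)^{\ell}$, I conclude $\Ass_S(S/I(G)^{\ell}) \supsetneq \Min_S(S/I(G)^{\ell})$.

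Finally I would invoke the standard fact that a Cohen--Macaulay ring is unmixed: if $S/I(G)^{\ell}$ were Cohen--Macaulay then $\Ass_S(S/I(G)^{\ell}) = \Min_S(S/I(G)^{\ell})$, contradicting the existence of an embedded prime. Thus $S/I(G)^{\ell}$ is not Cohen--Macaulay, as claimed. I do not expect a genuine obstacle here; the one point deserving care is the identification of $I(G)^{(\ell)}$ with the minimal--primary part of $I(G)^{\ell}$, which is immediate from the localization description of the symbolic power, after which the non--Cohen--Macaulayness is forced by unmixedness.
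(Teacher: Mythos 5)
Your proof is correct and takes essentially the same route as the paper: the paper's one-line proof likewise observes that some $K_{n_i}$ with $n_i\ge 3$ contains a triangle, applies Lemma~\ref{SVV} with $s=2\le t=\ell$ to conclude $I(G)^{(\ell)}\ne I(G)^{\ell}$, and leaves implicit exactly the standard argument you spell out for the ``in particular'' clause (Cohen--Macaulayness forces unmixedness, and since the symbolic power is the intersection of the minimal-primary components of the ordinary power, absence of embedded primes would force $I(G)^{\ell}=I(G)^{(\ell)}$). Your careful verification via prime avoidance that every embedded prime meets $W$ is a correct filling-in of a detail the paper takes for granted.
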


\begin{proof}
Under the assumption, $G$ always contains a triangle ($3$-cycle).  
\end{proof}

\begin{proof}[Proof of Theorem $\ref{Power-cor}$]
Now suppose that $S/I(G)^{\ell}$ is Cohen--Macaulay 
for some integer $\ell \ge 3$, 
and that $I(G)$ is \textit{not} complete intersection. 
\par  
By Theorem \ref{Main-CM}, $G$ can be written as a disjoint union 
of finitely many complete graphs. 
However, this contradicts the above corollary. 
\end{proof}

\par 
The next example shows that the Cohen--Macaulayness of symbolic power ideals 
is different from that of ordinary power ideals.  

\begin{exam} \label{union-triangles}
Let $G$ be a disjoint union of $d$ complete $3$-graphs.   
Set 
\[
I=I(G)=(x_{11}x_{12},x_{11}x_{13},x_{12}x_{13},\ldots, x_{d1}x_{d2},x_{d1}x_{d3},x_{d2}x_{d3})
\] 
in a polynomial ring $S=K[x_{11},x_{12},x_{13},\ldots,x_{d1},x_{d2},x_{d3}]$. 
Then 
\begin{enumerate}
 \item $S/I^{(\ell)}$ is Cohen--Macaulay of dimension $d$ for every $\ell \ge 1$.  
 \item $S/I^{\ell}$ is \textit{not} Cohen--Macaulay for any $\ell \ge 2$. 
 \item $I$ is \textit{not} complete intersection. 
\end{enumerate}
\end{exam}

\begin{proof}
(1) follows from Theorem \ref{Main-CM}. 
\par 
(2) If $\ell \ge 3$, then the assertion follows from Theorem \ref{Power-cor}. 
When $\ell=2$, it follows from the fact $x_{11}x_{12}x_{13} \in I^{(2)} \setminus I^2$.      
\end{proof}

\par \vspace{3mm}
\subsection{Some related results}
In the final of this section, we comment a relationship between our results 
and the theorem by Minh--Trung \cite{MiT}.
Minh and Trung studied Cohen--Macaulay properties of the symbolic power ideals 
for $1$-dimensional simplicial complexes. 

\begin{thm-q}[Minh--Trung; see \cite{MiT}]
Let $\ell \ge 3$ be an integer. 
Let $I=I_{\Delta}$ be the Stanley--Reisner ideal 
of a simplicial complexes of dimension $1$. 
Then $S/I^{(\ell)}$ is Cohen--Macaulay if and only if  
every pair of disjoint edges of $\Delta$ is contained in a cycle of length $4$.  
\end{thm-q}

\par 
If $I_{\Delta}$ is generated by degree $2$ monomials, 
the ideal $I_{\Delta}$ can be regarded as the edge ideal of a graph $G$. 
Then the required condition in the above theorem says that 
$G$ is a disjoint union of two complete graphs. 
So, their theorem does not conflict our theorem. 

\medskip
\section{Finite local cohomology and symbolic power}

\par 
In \cite{GT}, Goto and Takayama introduced the notion of generalized 
complete intersection complex. 
On the other hand, in \cite{TY}, the last two authors defined the notion of 
locally complete intersection complex and gave a structure theorem 
for those complexes.  
Note that $\Delta$ is a generalized complete intersection complex  
if and only if $\Delta$ is a pure, locally complete intersection complex. 

\begin{defn}[\textrm{cf. \cite{TY}}]
Let $\Delta$ be a simplicial complex on the vertex set $V$. 
The complex $\Delta$ is called a \textit{locally complete intersection 
complex} if $K[\link_{\Delta} \{v\}]$ is complete intersection for every 
vertex $v \in V$.  
\end{defn}

\par 
The following result gives a structure theorem for locally 
complete intersection complexes. 

\begin{thm}[\textrm{cf. \cite{TY}}] \label{Structure}
Let $\Delta$ be a simplicial complex on $V$ such that $V \ne \emptyset$. 
Then $\Delta$ is a locally complete intersection complex if and only if 
it is a finitely many disjoint union of the following connected 
complexes$:$ 
\begin{enumerate}
\item[(a)] a complete intersection complex $\Gamma$ with $\dim \Gamma \ge 2;$ 
\item[(b)] $m$-gon $(m \ge 3);$
\item[(c)] $m'$-pointed path $(m' \ge 2);$ 
\item[(d)] a point. 
\end{enumerate}
When this is the case, $K[\Delta]$ is Cohen--Macaulay 
$($resp., Buchsbaum $)$ 
if and only if $\dim \Delta =0$ or $\Delta$ is connected  $($resp., pure$)$. 
\end{thm}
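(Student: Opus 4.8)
**The plan is to prove Theorem \ref{Structure} in two directions, starting from the local-to-global nature of the defining condition.**

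The plan is to first establish the forward implication by analyzing the local structure. Suppose $\Delta$ is a locally complete intersection complex, meaning $K[\link_{\Delta}\{v\}]$ is a complete intersection for every vertex $v$. Since complete intersection Stanley--Reisner rings are themselves well understood (they correspond to complexes that are boundaries of simplices or iterated cones over cross-polytopes, as illustrated in Example \ref{ex-ci}), I would extract strong combinatorial constraints on each vertex link. The key observation is that for a connected component $C$ of $\Delta$, the dimension of $C$ governs which of the four cases (a)--(d) can occur. When $\dim C \ge 2$, I expect to show that the complete-intersection condition on every link forces $C$ itself to be a complete intersection complex, giving case (a). The delicate range is $\dim C = 1$: here $C$ is a connected graph (as a $1$-dimensional complex), and I would argue that the link of each vertex being a complete intersection $0$-dimensional complex restricts the vertex degrees so severely that $C$ must be either an $m$-gon (case (b)) or an $m'$-pointed path (case (c)). Finally $\dim C = 0$ yields case (d), a single point.

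For the converse, the plan is to verify directly that each of the building blocks (a)--(d) is a locally complete intersection complex and that this property is preserved under disjoint union. Disjoint-union stability is immediate because $\Delta(G_1 \coprod G_2) = \Delta(G_1)*\Delta(G_2)$ and links of vertices in a disjoint union are computed componentwise: a vertex $v$ in one component has its link entirely within that component. Checking the individual cases is routine — a complete intersection complex has complete intersection links by a localization argument, and for polygons and paths one computes the links explicitly and sees they are complete intersections (typically two disjoint points or a single point, whose Stanley--Reisner rings are trivially complete intersections).

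The last assertion, characterizing when $K[\Delta]$ is Cohen--Macaulay or Buchsbaum, I would handle by invoking the general theory recalled in Section~1. Since a Buchsbaum complex is exactly one with (FLC), and (FLC) is equivalent to purity together with Cohen--Macaulayness of all nonempty-face links, I would argue that once $\Delta$ has the disjoint-union structure above, the links are automatically Cohen--Macaulay (being complete intersections), so Buchsbaumness reduces precisely to purity of $\Delta$. For the Cohen--Macaulay criterion, a disjoint union of complexes of positive dimension is never connected when there is more than one component, and a disconnected complex of dimension $\ge 1$ fails $(S_2)$ hence fails Cohen--Macaulayness; conversely a connected such complex (or one of dimension $0$) satisfies the depth condition.

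\textbf{The main obstacle} I anticipate is the $\dim C = 1$ analysis in the forward direction: translating the algebraic complete-intersection condition on each vertex link into the sharp combinatorial dichotomy ``polygon or path'' requires carefully ruling out vertices of degree $\ge 3$ and handling the endpoints of paths versus the closed-up structure of cycles. This is where the classification truly lives, and it presumably rests on the structure theorem for complete intersection Stanley--Reisner ideals cited from \cite{TY}.
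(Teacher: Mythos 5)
You should first note that the paper itself does not prove Theorem \ref{Structure}: it is quoted (``cf.~\cite{TY}'') from the earlier Terai--Yoshida paper, so there is no internal proof to match and your plan must stand on its own. Its easy parts are fine: the converse direction (each of (a)--(d) is locally complete intersection, and the property passes to disjoint unions because the link of a vertex is computed inside its own connected component), and the Cohen--Macaulay/Buchsbaum criteria (Buchsbaumness is (FLC), i.e.\ purity plus Cohen--Macaulayness of all face links, and here the links are complete intersections; a disconnected complex of dimension $\ge 1$ has $\widetilde{H}_0 \ne 0$, so $\depth K[\Delta] \le 1 < \dim K[\Delta]$). One small slip: the identity $\Delta(G_1 \coprod G_2)=\Delta(G_1)*\Delta(G_2)$ concerns complementary complexes of graphs, where disjoint union of graphs becomes a \emph{join} of complexes; it is irrelevant to disjoint unions of simplicial complexes, and the correct justification is the componentwise computation of links, which you also give.

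The genuine gap is the forward direction in dimension $\ge 2$, which you dispose of with ``I expect to show that the complete-intersection condition on every link forces $C$ itself to be a complete intersection complex.'' That implication is the actual content of the theorem and is not automatic. A squarefree monomial ideal is a complete intersection if and only if its minimal generators have pairwise disjoint supports, so from local data one must deduce global pairwise coprimality: given two minimal generators $u,u'$ of $I_{\Delta}$ sharing a variable, one needs a vertex $v \notin \supp(u)\cup\supp(u')$ whose link sees both $u$ and $u'$ as minimal generators of $I_{\link_{\Delta}\{v\}}$, and producing such a $v$ --- together with handling cone points, the degenerate case $V=\supp(u)\cup\supp(u')$, and potentially non-pure configurations --- is exactly the case analysis that occupies the bulk of \cite{TY}. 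You also misplace the difficulty: the $\dim C = 1$ case you single out as the ``main obstacle'' is routine, since a $0$-dimensional complex on $k$ vertices has Stanley--Reisner ideal generated by the $\binom{k}{2}$ products $x_ix_j$ and has height $k-1$, hence is a complete intersection iff $k \le 2$; thus every vertex of a $1$-dimensional component has degree $\le 2$, and a connected graph with maximum degree $\le 2$ is a path or a cycle. Until the $\dim \ge 2$ patching argument is supplied (or explicitly imported from \cite{TY}), the proposal is a plan rather than a proof.
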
 
 
\par
Moreover, for any pure simplicial complex $\Delta$, 
it is a locally complete intersection complex if and only if  
$S/I_{\Delta}^{\ell}$ has (FLC) for all $\ell \ge 1$
(or, more generally, for infinitely many $\ell \ge 1$). 
But, for a fixed $\ell \ge 1$, it is open when $S/I^{\ell}$ has (FLC). 

\subsection{FLC properties of symbolic powers}
 In this section, we consider the following question, which is closely related to the 
above question in the case of edge ideals.  

\begin{quest} \label{FLC-quest}
Let $I(G)$ be denote the edge ideal of a graph $G$.  
Let $\ell \ge 1$ be an integer. 
When does $S/I(G)^{(\ell)}$ have $(FLC)$? 
\end{quest}

\par 
As one of answers to this question, we prove the second main theorem (Theorem \ref{Main-FLC}). 
We first  prove the following proposition. 

\begin{prop} \label{Main2-flc}
Let $\Delta_{n_1,\ldots,n_r}$ denote the simplicial complex 
whose Stanley--Reisner ideal is equal to 
the edge ideal of a disjoint union of 
complete graphs $K_{n_1},\ldots,K_{n_r}$. 
That is, 
\[
 I_{\Delta_{n_1,\ldots,n_r}} = 
I(K_{n_1} \textstyle{\coprod} \cdots \textstyle{\coprod} K_{n_r}). 
\]  
Let $\Delta$ be a simplicial complex defined by 
\[
 \Delta = \Delta_{n_{11},\ldots,n_{1d}} \;\textstyle{\coprod}\; 
\Delta_{n_{21},\ldots,n_{2d}} \;\textstyle{\coprod} 
\; \ldots \;
\textstyle{\coprod} \;
\Delta_{n_{p1},\ldots,n_{pd}}, 
\]
where one can take all $n_{ij}=1$ when $p \ge 2$. 
Put
\[
S=K \left[x_{ij}^{(k)} \;:\;  
1 \le i \le d; \;
\;1 \le k \le p; \;
1 \le j \le n_{ki}
\right],
\]
a polynomial ring over $K$, and 
\begin{eqnarray*}
 I_{\Delta} &=& \left(x_{ij}^{(k)} x_{ij'}^{(k)} \;:\; 1 \le i \le d,\; 
1 \le j < j' \le n_{ki}; \;
1 \le k \le p  \right)S \\
& + & (x_{ij}^{(k)}x_{i'j'}^{(m)} \;:\; 1 \le i,\,i' \le d,\; 
1 \le j \le n_{ki}, 1 \le j' \le n_{mi'},\; 1 \le k < m \le p)S. 
\end{eqnarray*}
Then $S/I_{\Delta}^{(\ell)}$ has $($FLC$)$ for every $\ell \ge 1$. 
\end{prop}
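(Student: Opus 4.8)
The plan is to reduce the $(\mathrm{FLC})$ property to Cohen--Macaulayness on the punctured spectrum and then to identify the localizations at monomial primes with symbolic powers of links, which are Cohen--Macaulay by Corollary~\ref{Main1-extend}.

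First I would record the ambient structure. Each component $\Delta_{n_{k1},\dots,n_{kd}}$ is the complementary complex of a disjoint union of $d$ complete graphs, hence is pure of dimension $d-1$, and a disjoint union of pure complexes of equal dimension is pure; thus $\Delta$ is pure of dimension $d-1$. Since $I_{\Delta}$ is squarefree, $I_{\Delta}^{(\ell)}=\bigcap_{F\in\FF(\Delta)}P_F^{\ell}$ has no embedded primes and $\Ass_S(S/I_{\Delta}^{(\ell)})=\Min_S(S/I_{\Delta})$, so $S/I_{\Delta}^{(\ell)}$ is equidimensional of dimension $d$. Invoking the standard characterization of finite local cohomology (an equidimensional graded ring has $(\mathrm{FLC})$ if and only if it is Cohen--Macaulay on its punctured spectrum), it suffices to show that $(S/I_{\Delta}^{(\ell)})_{P}$ is Cohen--Macaulay for every prime $P\neq\frm$. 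As $I_{\Delta}^{(\ell)}$ is a monomial ideal its non-Cohen--Macaulay locus is torus-invariant and closed, hence defined by a monomial ideal, so it is enough to treat monomial primes $P=P_W=(x_v:v\in W)$ with $W\subsetneq V$; writing $\sigma=V\setminus W$, such a prime is supported only when $\sigma\in\Delta$, and we may take $\sigma\neq\emptyset$.

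The key step is a localization identity. One checks $P_F\subseteq P_W$ exactly when $F\supseteq\sigma$, so the components $P_F^{\ell}$ with $F\not\supseteq\sigma$ become the unit ideal after localizing and
\[
(I_{\Delta}^{(\ell)})_{P_W}=\bigcap_{F\in\FF(\Delta),\,F\supseteq\sigma}(P_F^{\ell})_{P_W}.
\]
Writing $F=\sigma\cup G$ with $G\in\FF(\link_{\Delta}\sigma)$ and noting $V\setminus F=W\setminus G$, this intersection is exactly $(I_{\link_{\Delta}\sigma}^{(\ell)})_{P_W}$, where $I_{\link_{\Delta}\sigma}^{(\ell)}$ is formed inside $K[V\setminus\sigma]$ (each non-vertex $v$ of the link gives $x_v\in I_{\link_{\Delta}\sigma}$, since $\{v\}$ is then a non-face). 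Because the $x_v$ with $v\in\sigma$ are units in $S_{P_W}$, the ring $(S/I_{\Delta}^{(\ell)})_{P_W}$ is a localization of the polynomial extension $\bigl(K[V\setminus\sigma]/I_{\link_{\Delta}\sigma}^{(\ell)}\bigr)[x_v:v\in\sigma]$ at a prime contracting to its irrelevant ideal. Since a polynomial extension is flat with regular fibers and localization preserves Cohen--Macaulayness, it follows that $K[V\setminus\sigma]/I_{\link_{\Delta}\sigma}^{(\ell)}$ Cohen--Macaulay implies $(S/I_{\Delta}^{(\ell)})_{P_W}$ Cohen--Macaulay.

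Finally I would compute the link. Since $\Delta$ is a disjoint union on pairwise disjoint vertex sets, a nonempty face $\sigma$ lies in a single component, say $\sigma\subseteq V_k$, and then $\link_{\Delta}\sigma=\link_{\Delta_{n_{k1},\dots,n_{kd}}}\sigma$. Writing $\Delta_{n_{k1},\dots,n_{kd}}$ as the join of the discrete complexes $\Delta(K_{n_{k1}}),\dots,\Delta(K_{n_{kd}})$ (each a set of points), the link of a face in a join is the join of the links, and a point of a discrete complex has link $\{\emptyset\}$; hence if $\sigma$ meets exactly the groups indexed by $T\subseteq\{1,\dots,d\}$ then $\link_{\Delta}\sigma=\Delta\bigl(\coprod_{i\notin T}K_{n_{ki}}\bigr)$. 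Inside $K[V\setminus\sigma]$ this gives
\[
I_{\link_{\Delta}\sigma}=I\Bigl(\coprod_{i\notin T}K_{n_{ki}}\Bigr)+(x_v:\, v\text{ a non-vertex of the link}),
\]
which is precisely the form treated in Corollary~\ref{Main1-extend}, so $K[V\setminus\sigma]/I_{\link_{\Delta}\sigma}^{(\ell)}$ is Cohen--Macaulay for every $\ell\ge1$. Combining with the previous paragraph yields $(S/I_{\Delta}^{(\ell)})_{P}$ Cohen--Macaulay for all $P\neq\frm$, hence $(\mathrm{FLC})$. I expect the main obstacle to be the localization identity in the third paragraph: one must verify carefully that localizing $\bigcap_F P_F^{\ell}$ at $P_W$ discards exactly the components with $F\not\supseteq\sigma$ and recovers $I_{\link_{\Delta}\sigma}^{(\ell)}$ together with its ghost variables, and that Cohen--Macaulayness ascends along the flat polynomial extension. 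The remaining ingredients are either purely combinatorial (the link computation) or direct appeals to results already established.
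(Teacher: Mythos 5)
Your proposal is correct and takes essentially the same route as the paper: reduce (FLC) of the equidimensional ring $S/I_{\Delta}^{(\ell)}$ to Cohen--Macaulayness away from $\frm$, identify the localized ideal as a sum of variables plus the edge ideal of a disjoint union of complete graphs, and invoke Theorem \ref{SPcomplete} together with Corollary \ref{Main1-extend}. The only difference is economy of execution: the paper simply inverts a single vertex $x$ (which suffices, since every non-maximal prime in the support omits some vertex, so every such localization factors through some $(S/I_{\Delta}^{(\ell)})_x$) and reads off $I_x$ directly, whereas you carry out the same computation at all monomial primes via links of faces.
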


\begin{proof}
Put $I=I_{\Delta}$. 
Since $\dim \Delta_{n_1,\ldots,n_d} =d-1$, $\Delta$ is a 
pure simplicial complex of dimension $d-1$. 
Hence $S/I^{(\ell)}$ is an equidimensional ring of dimension $d$. 
So, it is enough to show that $(S/I^{(\ell)})_x$ is Cohen--Macaulay for 
any vertex $x$. 
Without loss of generality, we may assume that $x=x_{11}^{(1)}$. 
Then 
\begin{eqnarray*}
 I_{x}&=&(x_{1j}^{(1)}\,:\, 2 \le j \le n_{11})S_x
+(x_{ij}^{(k)} \,:\, 1 \le i \le d,\; 
1 \le j \le n_{ki},\; 2 \le k  \le p)S_x \\
& & + (x_{ij}^{(1)}x_{ij'}^{(1)} \,:\, 2 \le i \le d,\; 
1 \le j < j' \le n_{1i})S_x.  
\end{eqnarray*}
By Theorem \ref{SPcomplete} and Corollary \ref{Main1-extend}, $(S/I^{(\ell)})_x$ is Cohen--Macaulay 
for all $\ell \ge 1$.  
\end{proof}

\par \vspace{2mm}
\begin{exam} \label{special-exam}
Let $G=K_p$ be the complete $p$-graph. 
Then $\Delta_p$ is the complementary 
simplicial complex of $K_p$. 
Moreover, $\Delta_p$ has $p$ connected component$:$ 
$\Delta_p = \{x_1\} \coprod \ldots \coprod \{x_p\}$.  
Then $K[\Delta_p] = K[x_1,\ldots,x_p]/(x_ix_j\,:\, 1 \le i < j \le p)$. 
\par 
On the other hand, 
$K[\Delta_{\bf 1}] = K[x_1,\ldots,x_d]$, where ${\bf 1} = \underbrace{1,\ldots,1}_{d}$.  
\end{exam}

\par \vspace{1mm}
Now suppose that $S/I(G)^{(\ell)}$ has $($FLC$)$ for some $\ell \ge 3$. 
As $I(G) = \sqrt{I(G)^{(\ell)}}$, $S/I(G)$ also has $($FLC$)$ (see e.g. \cite{HTT}). 
Let $\Delta=\Delta(G)$ be the complementary simplicial complex of $G$: $I_{\Delta} = I(G)$. 
Then $\Delta$ is pure and 
$S_x/(I_{\Delta}^{(\ell)})_x$ is Cohen--Macaulay for every vertex $x \in V$. 
Put $\Gamma = \link_{\Delta} \{x\}$. 
This implies that $K[V \setminus \{x\}]/I_{\Gamma}^{(\ell)}$ is Cohen--Macaulay.  
Therefore, by Theorem \ref{Main-CM}, $I_x$ can be written as 
\[
 I_x = (y_1,\ldots,y_m)+ I(H_1)S_x + \cdots + I(H_{d-1})S_x,
\] 
where $H_1,\ldots,H_{d-1}$ are disjoint complete subgraphs of $G$ and  
$y_1,\ldots,y_m \in V$ such that $\{x,y_j\} \in E(G)$ 
and no elements of $\{y_1,\ldots,y_m\}$ are contained in $H_1  \cup \cdots \cup H_{d-1}$.     

\par \vspace{2mm}
In order to prove the second main theorem (Theorem \ref{Main-FLC}), we need the following lemma.

\begin{lemma} \label{Graph}
Let $G$ be a graph, and let $\Delta$ be the complementary simplicial complex of $G$: 
$I_{\Delta} = I(G)$. 
Suppose $d = \dim S/I(G) \ge 3$ and $\Delta$ is pure. 
Moreover, assume that for any vertex $u$, there exist vertices 
$y_1,\ldots,y_m$ and complete subgraphs $H_1,\ldots, H_{d-1}$ such that 
$I(G)_{u}$ can be written as 
\[
I(G)_{u} = (y_1,\ldots,y_m)S_u + I(H_1)S_u+\cdots + I(H_{d-1})S_u, 
\] 
where $V(G) = \{u\} \coprod \{y_1,\ldots,y_m\} \coprod V(H_1) \coprod \cdots \coprod V(H_{d-1})$.  
\par \vspace{2mm}
Then for any vertex $x \in V(G)$, there exist subgraphs $G_0$, $G_1$,\ldots,$G_d$ 
which satisfies the following conditions:
\begin{enumerate}
 \item $V(G) = V(G_0) \coprod V(G_1) \coprod \cdots \coprod V(G_{d-1}) \coprod V(G_d)$ and $x \in G_d$.  
 \item $G\,|_{V(G_i)} = G_i$ for each $i=0,1,\ldots,d-1,d$.  
 \item $G_1 \coprod \ldots \coprod G_{d-1} \coprod G_d$ is a disjoint union of complete graphs. 
 \item For every $y \in G_0$ and for every $z_i \in G_i$ $(i=1,\ldots,d)$, 
we have $\{y, z_i\} \in E(G)$. 
\end{enumerate}
\end{lemma}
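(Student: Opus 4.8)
The plan is to translate the hypothesis into a purely combinatorial condition on neighbourhoods and then force all the required adjacencies by forbidding induced paths. For a vertex $u$ write $N(u)$ for its set of neighbours and $\overline{N}(u)=V(G)\setminus(N(u)\cup\{u\})$ for the set of its non-neighbours. Inverting the variable at $u$ turns each edge $\{u,y\}$ into a unit multiple of $y$ and leaves the edges among non-neighbours untouched, so the displayed form of $I(G)_u$ is equivalent to saying that the induced subgraph $G\,|_{\overline{N}(u)}$ is exactly the disjoint union of the $d-1$ complete graphs $H_1,\ldots,H_{d-1}$. In other words the hypothesis says precisely that, for every vertex $u$, the graph $G\,|_{\overline{N}(u)}$ is a disjoint union of cliques, i.e. it contains no induced path on three vertices: there are no $a,b,c\in\overline{N}(u)$ with $\{a,b\},\{b,c\}\in E(G)$ but $\{a,c\}\notin E(G)$. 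This ``$P_3$-free'' reformulation is the engine of the whole argument.

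First I would fix $x$ and apply the hypothesis at $u=x$, writing $\overline{N}(x)=V(H_1)\coprod\cdots\coprod V(H_{d-1})$ and setting $G_i:=H_i$ for $i=1,\ldots,d-1$; these are cliques, pairwise non-adjacent, and non-adjacent to $x$. It remains to split $N(x)$. I would call a neighbour $y\in N(x)$ \emph{inner} if $\{y,h\}\in E(G)$ for every $h\in\bigcup_k V(H_k)$ and \emph{outer} if $\{y,h\}\notin E(G)$ for every such $h$, and then put
\[
 G_0=\{\,y\in N(x)\;:\;y\ \text{is inner}\,\},\qquad
 G_d=\{x\}\cup\{\,y\in N(x)\;:\;y\ \text{is outer}\,\}.
\]
The crux is the dichotomy that every neighbour is inner or outer, with no intermediate behaviour. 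To prove it, suppose $y\in N(x)$ is adjacent to some $a\in V(H_i)$ and non-adjacent to some $b\in\bigcup_k V(H_k)$. If $a,b$ lie in the same clique $H_i$ (so $\{a,b\}\in E(G)$), I would first show $\{y,c\}\notin E(G)$ for every $c\in V(H_k)$ with $k\ne i$: indeed $x,y,c\in\overline{N}(b)$ with $\{x,y\}\in E(G)$ and $\{x,c\}\notin E(G)$, so $\{y,c\}\in E(G)$ would create the induced path $x-y-c$ inside $\overline{N}(b)$. Then, choosing such a $c$ (which exists because $d-1\ge2$), the vertices $y,a,b$ all lie in $\overline{N}(c)$ and form the induced path $y-a-b$, a contradiction. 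If instead $a,b$ lie in different cliques, then $x,y,a\in\overline{N}(b)$ form the induced path $x-y-a$, again impossible. Hence the dichotomy holds and $N(x)=G_0\sqcup(G_d\setminus\{x\})$.

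The remaining verifications are of the same flavour. That $G_d$ is a clique follows because two outer neighbours $y,y'$ with $\{y,y'\}\notin E(G)$ would, together with $x$, lie in $\overline{N}(h)$ for any $h\in V(H_1)$ and give the induced path $y-x-y'$; similarly, for $y\in G_0$ inner and $y'\in G_d$ outer, the assumption $\{y,y'\}\notin E(G)$ places $y$ together with two vertices drawn from distinct cliques $H_i,H_{i'}$ inside $\overline{N}(y')$, producing the induced path $h_i-y-h_{i'}$. This last step is exactly where $d\ge3$ is used, since it needs at least two cliques among the non-neighbours of $x$. With the dichotomy and these two adjacency facts in hand, conditions (1)--(4) are immediate: (1) holds by construction and $x\in G_d$, the $G_i$ are induced by definition, (3) holds because $G_1,\ldots,G_{d-1}$ are the pairwise non-adjacent cliques $H_i$ while $G_d$ is a clique non-adjacent to all of them, and (4) is precisely the inner property together with the fact that $G_0$ is complete to $G_d$. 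The main obstacle is the dichotomy, and in particular its within-a-clique case, which cannot be settled by a single forbidden path and instead requires the two-step argument above: first clearing the other cliques, then locating the path $y-a-b$ in the non-neighbourhood of a vertex of a third clique.
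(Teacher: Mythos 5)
Your proof is correct and follows essentially the same route as the paper's: after the same reduction (apply the hypothesis at $u=x$ to make $\overline{N}(x)$ a disjoint union of the cliques $H_1,\ldots,H_{d-1}$), your inner/outer dichotomy is the paper's Claim 1 (proved there by the same two-stage use of a second clique, needing $d\ge 3$), and your verifications that $G_d$ is a clique detached from the $H_i$ and that $G_0$ is complete to $G_d$ are precisely the paper's Claims 2 and 3. The only difference is cosmetic: the paper phrases each forbidden-$P_3$ step as a membership computation in the localization $I(G)_{z}$ (e.g.\ $xy,\,yz_1\in I(G)_{z_i}$ forces $xz_1\in I(G)_{z_i}$), whereas you phrase it as the absence of induced paths on three vertices in $\overline{N}(z)$, which is the same condition.
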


\begin{proof}
Fix $x \in V(G)$. 
Applying the assumption to the case of $u=x$, we can find disjoint
complete subgraphs $G_1,\ldots,G_{d-1}$ of $G$
and vertices 
$y_1,\ldots,y_m$ such that 
\[
 I(G)_{x} = (y_1,\ldots,y_m)S_x+ I(G_1)S_x+\cdots + I(G_{d-1})S_x
\]
and $\{y_1,\ldots,y_m\}$ are contained in 
$V(G) \setminus V(G_1  \cup \cdots \cup G_{d-1})$.  
Then we prove the following claim. 

\begin{description}
\item[Claim 1] For any $y \in \{y_1,\ldots,y_m\}$, 
if $\{y,z_1\} \in E(G)$ for some $z_1 \in V(G_1)$, then 
$\{y, z_i\} \in E(G)$ holds for all $i=1,2,\ldots,d-1$ and 
for all $z_i \in V(G_i)$. 
\end{description}
\par 
Now suppose that $\{y,z_1\} \in E(G)$ for some 
$z_1 \in V(G_1)$. Then $yz_1 \in I(G)$.  
\par 
For any $z_i \in V(G_i)$ $(i=2,\ldots,d-1)$, 
if $\{y, z_i\} \notin E(G)$, 
then $yz_i \notin I(G)$. 
As $z_i \notin I(G)_x$, we have $xz_i \notin I(G)$. 
By the choice of $G_i$, $z_1z_i \notin I(G)$. 
Hence none of $y,x,z_1$ appears in $I(G)_{z_i}$. 
However, since $xy,yz_1 \in I(G)_{z_i}$, 
we have $xz_1 \in I(G)_{z_i}$ by assumption, and so $xz_1 \in I(G)$. 
This implies that $z_1 \in I(G)_x$. 
This contradicts the assumption. 
Thus we have $\{y,z_i\} \in E(G)$ for all $z_i \in V(G_i)$ $(i=2,\ldots,d-1)$. 
\par  
As $d \ge 3$, applying $\{y,z_2\} \in E(G)$ to the above argument, 
we obtain that $\{y,z'\} \in E(G)$ for all $z' \in V(G_1)$.  
Hence we proved the claim. 

\par \vspace{2mm}
By the above claim, by renumbering if necessary, 
we may assume that there exists an integer $k$ with $1 \le k \le m$ 
such that 
\begin{enumerate}
 \item[(i)] When $1 \le j \le k$,  
$\{y_j,z_i\} \in E(G)$ holds for every $1 \le i \le d-1$ and $z_i \in V(G_i)$. 
 \item[(ii)] When $k+1 \le j \le m$, 
$\{y_j,z\} \notin E(G)$ holds for every $z \in V(G_1 \cup \cdots \cup G_{d-1})$. 
\end{enumerate}

\par \vspace{2mm} \par \noindent 
Then we put $V_0=\{y_1,\ldots,y_k\}$ and $V_d = \{x,y_{k+1},\ldots,y_m\}$ and 
$G_0=G\,|_{V_0}$ and $G_d=G\,|_{V_d}$.  
In the following, we show that these $G_j$ $(j=0,\ldots,d)$ satisfy all conditions of the lemma. 
To show the condition (3), it is enough to show the following claim. 

\begin{description}
 \item[Claim 2] $G_d$ is a complete graph, and $G_i$ and $G_d$ are 
disjoint for each $i=1,\ldots,d-1$. 
\end{description}
To see that $G_d$ is a complete graph, 
it is enough to show that $\{u,u'\} \in E(G)$ whenever $u,u' \in V(G_d) \setminus \{x\}$. 
Suppose $\{u,u'\} \notin E(G)$. Take $z_1 \in V(G_1)$. 
Then since $\{x,z_1\}, \{u,z_1\}, \{u',z_1\} \notin E(G)$ and 
$xu,xu' \in I(G)_{z_1}$, we have $uu' \in I(G)_{z_1}$, and 
thus $\{u,u'\}\in E(G)$. 
The latter assertion immediately follows from the definition of $G_d$. 

To show the condition (4), it is enough to show the following claim. 
\begin{description}
 \item[Claim 3] For every $y \in G_0$, $\{y,u\} \in E(G)$ for every $u \in G_d$.  
\end{description}
Suppose that $\{y,u\} \notin E(G)$. 
Take $z_1 \in V(G_1)$ and $z_2 \in V(G_2)$. 
Then, since $d \ge 3$, $z_1,z_2,u$ are distinct vertices and $\{z_1,u\}$, $\{z_2,u\} \notin E(G)$ by Claim 2.  
By definition, $\{y,z_1\}$, $\{y,z_2\} \in E(G)$. 
By considering $yz_1$, $yz_2 \in I(G)_u$, we get $z_1z_2 \in I(G)_u$. 
Hence we have $\{z_1,z_2\} \in E(G)$. 
This is a contradiction. 
Therefore we conclude that $\{y,u\} \in E(G)$.

\par \vspace{2mm}
We have finished the proof of the lemma. 
\end{proof}

\par 
We are now ready to prove the second main theorem
in this paper.

\begin{thm} \label{Main-FLC} 
Let $G$ be a graph on $V=[n]$, 
and let $I(G)\subseteq S=K[v : v \in V]$ 
denote the edge ideal of $G$. 
Let $\Delta=\Delta(G)$ be the complementary simplicial complex of $G$, that is, 
$I_{\Delta} = I(G)$.   
Let $p$ denote the number of connected components of $\Delta$. 
Suppose that $\Delta$ is pure and $d=\dim S/I(G) \ge 3$. 
Then the following conditions are equivalent$:$ 
\begin{enumerate}
 \item $S/I(G)^{(\ell)}$ has $($FLC$)$ for every $\ell \ge 1$. 
 \item $S/I(G)^{(\ell)}$ has $($FLC$)$ for some $\ell \ge 3$. 
 \item There exist 
$(n_{i1},\ldots,n_{id}) \in \bbN^d$ for every $i=1,\ldots,p$
such that 
$\Delta$ can be written as 
\[
\Delta = \Delta_{n_{11},\ldots,n_{1d}} \;\textstyle{\coprod}\; 
\Delta_{n_{21},\ldots,n_{2d}} \;\textstyle{\coprod} 
\; \ldots \;
\textstyle{\coprod} \;
\Delta_{n_{p1},\ldots,n_{pd}}.
\] 
\end{enumerate}  
\end{thm}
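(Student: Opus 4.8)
The plan is to establish the cycle of implications $(1) \Rightarrow (2) \Rightarrow (3) \Rightarrow (1)$. The implication $(1) \Rightarrow (2)$ is immediate, and $(3) \Rightarrow (1)$ is exactly Proposition \ref{Main2-flc}: once $\Delta$ is presented as the disjoint union $\Delta_{n_{11},\ldots,n_{1d}} \coprod \cdots \coprod \Delta_{n_{p1},\ldots,n_{pd}}$, that proposition asserts $S/I(G)^{(\ell)}$ has (FLC) for every $\ell \ge 1$. Thus all the content lies in $(2) \Rightarrow (3)$, and this is where I would concentrate.

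For $(2) \Rightarrow (3)$, assume $S/I(G)^{(\ell)}$ has (FLC) for some $\ell \ge 3$. First I would invoke the local analysis already assembled in the discussion preceding Lemma \ref{Graph}: since $I(G) = \sqrt{I(G)^{(\ell)}}$, the ring $S/I(G)$ is (FLC) and $\Delta$ is pure, and localizing at each vertex $x$ shows $K[\link_{\Delta}\{x\}]/I_{\link_{\Delta}\{x\}}^{(\ell)}$ is Cohen--Macaulay; by Theorem \ref{Main-CM} the link is a disjoint union of complete graphs, so $I(G)_x = (y_1,\ldots,y_m) + I(H_1)S_x + \cdots + I(H_{d-1})S_x$ for disjoint complete subgraphs $H_1,\ldots,H_{d-1}$. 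This is precisely the hypothesis of Lemma \ref{Graph}. Fixing any vertex $x$ and applying that lemma, I obtain a partition $V(G) = V(G_0) \coprod V(G_1) \coprod \cdots \coprod V(G_d)$ with $x \in G_d$, where $H := G|_{V(G_1)\cup\cdots\cup V(G_d)} = K_{|V(G_1)|} \coprod \cdots \coprod K_{|V(G_d)|}$ is a disjoint union of $d$ nonempty complete graphs (the link contributes the $d-1$ cliques $G_1,\ldots,G_{d-1}$, and $x \in G_d$), and every vertex of $G_0$ is joined to every vertex of $H$. Since a complete join of graphs corresponds to a disjoint union of complementary complexes (dual to $\Delta(G_1 \coprod G_2) = \Delta(G_1) * \Delta(G_2)$), this yields
\[
\Delta = \Delta(G_0) \;\coprod\; \Delta_{|V(G_1)|,\ldots,|V(G_d)|}.
\]

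Here the second piece is the desired complete $d$-partite complex, and it is connected because $d \ge 3$. I would then argue by induction on the number $p$ of connected components of $\Delta$. When $p = 1$, connectedness of $\Delta$ forces $V(G_0) = \emptyset$, so $\Delta = \Delta_{|V(G_1)|,\ldots,|V(G_d)|}$ and we are done. For $p \ge 2$, the peeled-off component $\Delta_{|V(G_1)|,\ldots,|V(G_d)|}$ accounts for exactly one connected component, so $\Delta(G_0)$ has $p-1$ components and is pure of dimension $d-1$ with $\dim K[V(G_0)]/I(G_0) = d \ge 3$. The crucial point is to check that the inductive hypothesis applies to $G_0$, i.e.\ that $K[V(G_0)]/I(G_0)^{(\ell)}$ has (FLC). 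For this I would use that for every vertex $x \in V(G_0)$ one has $\link_{\Delta}\{x\} = \link_{\Delta(G_0)}\{x\}$, since no face of the disjoint union mixes $V(G_0)$ with $V(H)$; hence the Cohen--Macaulayness of the localization of $S/I(G)^{(\ell)}$ at $x$ transfers verbatim to the localization of $K[V(G_0)]/I(G_0)^{(\ell)}$, and together with the purity of $\Delta(G_0)$ this yields (FLC). The inductive hypothesis then decomposes $\Delta(G_0)$ into $p-1$ pieces of the required form, which combined with the peeled-off component gives the decomposition in condition (3).

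The main obstacle is exactly this inductive step: confirming that (FLC) descends from $G$ to the induced subgraph $G_0$. Everything hinges on the identification $\link_{\Delta}\{x\} = \link_{\Delta(G_0)}\{x\}$ for $x \in V(G_0)$, which lets the local Cohen--Macaulay condition pass to $G_0$, while the bookkeeping of connected components (one stripped off at each stage) drives the induction and pins down the count $p$. The remaining verifications---purity and dimension of $\Delta(G_0)$, nonemptiness of each clique $V(G_i)$, and connectedness of the peeled-off component---are routine consequences of $d \ge 3$ and the purity of $\Delta$.
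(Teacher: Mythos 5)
Your proposal is correct, and it shares the paper's skeleton: Proposition \ref{Main2-flc} handles $(3)\Rightarrow(1)$, and for $(2)\Rightarrow(3)$ you use exactly the paper's localization analysis plus Theorem \ref{Main-CM} to verify the hypothesis of Lemma \ref{Graph}. But the final assembly is genuinely different. The paper does not induct on $p$: having fixed an \emph{arbitrary} vertex $x$ and obtained $G_0,\ldots,G_d$ from Lemma \ref{Graph}, it shows directly that the connected component $\Delta'$ of $\Delta$ containing $x$ equals $\Delta(G_1\coprod\cdots\coprod G_d)$, by checking $V(\Delta')=V(G_1\cup\cdots\cup G_d)$ and $I_{\Delta'}=(I_\Delta\cap K[V(G_1\cup\cdots\cup G_d)])S=I(G_1\cup\cdots\cup G_d)$; since $x$ was arbitrary, every component is of the required type and the proof ends there. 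Your peeling argument instead must transfer (FLC) from $G$ to $G_0$, and this is the one place where ``transfers verbatim'' undersells the work: for $x\in V(G_0)$ one has $I(G)_x=I(G_0)_xS_x+(z : z\in V(H))S_x$, and to deduce Cohen--Macaulayness of $\bigl(K[V(G_0)]/I(G_0)^{(\ell)}\bigr)_x$ from that of $(S/I(G)^{(\ell)})_x$ you cannot kill the variables $z$ (they are zerodivisors modulo the symbolic power); you need the direct-sum decomposition from the proof of Corollary \ref{Main1-extend} read in reverse, namely that the localization decomposes as a finite direct sum of copies of the rings $S'_x/(I(G_0)_x)^{(k)}$, $1\le k\le\ell$, indexed by monomials in the $z$'s, all of the same dimension, so the total is Cohen--Macaulay if and only if each summand is; after that, purity of $\Delta(G_0)$ plus Cohen--Macaulayness of all vertex localizations gives (FLC) by the criterion used in the proof of Proposition \ref{Main2-flc}. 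All of these ingredients are in the paper, so your route does close, though it buys nothing over the paper's one-shot per-component identification. One small point in your favor: your base case $p=1$ (connectedness of $\Delta$ forces $V(G_0)=\emptyset$, since the peeled-off join is connected as $d\ge 3$) is handled more transparently than the paper's terse reduction ``we may assume $p\ge 2$ by Theorem \ref{Main-CM}.''
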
 

\begin{proof} 
$(3) \Longrightarrow (1)$: It follows from Proposition \ref{Main2-flc}. 
\par \vspace{1mm} \par \noindent 
$(1) \Longrightarrow (2)$ is clear. 
\par \vspace{1mm} \par \noindent 
$(2) \Longrightarrow (3)$: 
We may assume that $p \ge 2$ by Theorem \ref{Main-CM}. 
Then we note that $\Delta$ satisfies the assumption of Lemma \ref{Graph}. 
Fix $x \in V$. 
Let $G_0,\ldots,G_d$ be subgraphs of $G$ determined by Lemma \ref{Graph}. 
Then it suffices to show that the connected component containing $x$ (say, $\Delta'$) 
is the following form: $\Delta' = \Delta(G')$, 
where $G' = G_1 \cup \cdots \cup G_d$, which is a disjoint union of complete graphs.
\par 
First we see that $V(\Delta')=V(G_1 \cup \cdots \cup G_d)$. 
Let $z \in V(G_1 \cup \cdots \cup G_d)$. 
If $z \in V(G_1 \cup \cdots \cup G_{d-1})$, then as $\{x,z\} \notin E(G)$, $\{x,z\} \in \Delta'$, 
that is, $z \in V(\Delta')$.   
Otherwise, $z \in V(G_d)$. Then there exists a vertex $z' \in V(G_1 \cup \cdots \cup G_{d-1})$ 
such that $\{z,z'\} \notin E(G)$. 
Moreover, as $\{x,z'\} \in \Delta'$, we have $z \in V(\Delta')$. 
Hence $V(G_1 \cup \cdots \cup G_d) \subseteq V(\Delta')$. 
The converse follows from the condition (4) in Lemma \ref{Graph}. 
\par 
Next we see that $I_{\Delta'} = I(G_1 \cup \cdots \cup G_d)$. 
Since $\Delta'$ is a connected component of $\Delta$, we get 
\begin{eqnarray*}
 I_{\Delta'} &=& (I_{\Delta} \cap K[V(G_1 \cup \cdots \cup G_d)])S \\ 
&=& (I(G) \cap K[V(G_1 \cup \cdots \cup G_d)])S \\
&=& I(G_1 \cup \cdots \cup G_d). 
\end{eqnarray*}
This yields that $\Delta'= \Delta(G_1 \cup \cdots \cup G_d)$, as required. 
\end{proof}

\begin{remark}
Let $t$ be an indeterminate over $R$. 
If $R$ has $($FLC$)$ but not Cohen--Macaulay, then $R[t]$ does not have $($FLC$)$. 
Hence, in the above theorem, we cannot replace $S$ with $S[t]$, 
where $t$ is an indeterminate over $S$. 
\end{remark}

\par
Comparing Theorem \ref{Main-CM} and Theorem \ref{Main-FLC}, we obtain the following corollary. 

\begin{cor} 
Suppose that $d=\dim S/I(G) \ge 3$. 
Let $\Delta(G)$ denote the complementary simplicial complex of $G$. 
Let $\ell \ge 3$ be an integer. 
Then the following conditions are equivalent. 
\begin{enumerate}
 \item $S/I(G)^{(\ell)}$ has $($FLC$)$ and $\Delta(G)$ is connected. 
 \item $S/I(G)^{(\ell)}$ is Cohen--Macaulay.  
\end{enumerate}
Then $G$ is a disjoint union of finitely many complete graphs and 
$S/I(G)^{(k)}$ is Cohen--Macaulay for all $k \ge 1$. 
\end{cor}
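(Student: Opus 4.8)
The plan is to obtain the corollary by combining the two main theorems, Theorem~\ref{Main-CM} and Theorem~\ref{Main-FLC}, together with one structural observation: the number of connected components of the complementary complex $\Delta(G)$ behaves very differently from that of $G$ itself, because a disjoint union of graphs corresponds to a \emph{simplicial join} of complementary complexes (as recorded at the end of subsection~1.7), and a join of nonempty complexes is connected. This bridge between ``disjoint union of complete graphs'' and ``connected complementary complex'' is the conceptual crux of the argument.

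For the implication $(2)\Rightarrow(1)$, I would first note that any Cohen--Macaulay ring has (FLC), so that half of (1) is immediate. To get connectedness of $\Delta(G)$, I would invoke Theorem~\ref{Main-CM}: Cohen--Macaulayness of $S/I(G)^{(\ell)}$ for some $\ell\ge 3$ forces $G=K_{n_1}\coprod\cdots\coprod K_{n_r}$, and since $\dim S/I(G)=r$, the hypothesis $d\ge 3$ gives $r\ge 2$. Then $\Delta(G)=\Delta(K_{n_1})*\cdots*\Delta(K_{n_r})$ is a nontrivial join; as any vertex $v$ of one join factor and any vertex $w$ of another factor span a face $\{v,w\}$ of the join, the $1$-skeleton is connected, and hence so is $\Delta(G)$.

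For the implication $(1)\Rightarrow(2)$, I would apply Theorem~\ref{Main-FLC}: the (FLC) hypothesis for some $\ell\ge 3$, together with $\Delta$ pure and $d\ge 3$, yields a decomposition $\Delta=\coprod_{i=1}^{p}\Delta_{n_{i1},\ldots,n_{id}}$ into its $p$ connected components. The assumption that $\Delta(G)$ is connected forces $p=1$, so $\Delta=\Delta_{n_{11},\ldots,n_{1d}}$; equivalently $G=K_{n_{11}}\coprod\cdots\coprod K_{n_{1d}}$ is a disjoint union of $d$ complete graphs. Cohen--Macaulayness of $S/I(G)^{(\ell)}$ then follows from Theorem~\ref{SPcomplete}, which moreover gives the concluding assertion that $S/I(G)^{(k)}$ is Cohen--Macaulay for every $k\ge 1$.

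The only content beyond citing the two theorems is the connectivity of a nontrivial simplicial join, so the single short step I would be most careful about is verifying that $\Delta(K_{n_1})*\cdots*\Delta(K_{n_r})$ is connected exactly when $r\ge 2$ (each $\Delta(K_{n_i})$ being a discrete set of points, the join is what supplies the connecting edges). I do not expect a genuine obstacle here; once that is in place, everything reduces to matching the $p=1$ case of Theorem~\ref{Main-FLC} against Theorem~\ref{Main-CM} and Theorem~\ref{SPcomplete}.
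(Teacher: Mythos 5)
Your proposal is correct and takes essentially the same route as the paper, which gives no written proof and derives the corollary precisely by "comparing Theorem \ref{Main-CM} and Theorem \ref{Main-FLC}," with your join-connectivity observation ($\Delta(G_1\coprod G_2)=\Delta(G_1)*\Delta(G_2)$, and a join of at least two nonempty factors is connected) supplying the implicit bridge. The one detail to make explicit is that purity of $\Delta(G)$, needed to invoke Theorem \ref{Main-FLC} in $(1)\Rightarrow(2)$, is not a hypothesis of the corollary but follows from (FLC) of $S/I(G)^{(\ell)}$, since $I(G)=\sqrt{I(G)^{(\ell)}}$ implies $S/I(G)$ has (FLC) by \cite{HTT} and (FLC) complexes are pure, exactly as the paper notes before Lemma \ref{Graph}.
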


\begin{remark}
In case of $\dim S/I(G)=2$, the following conditions are equivalent:
\begin{enumerate}
 \item $S/I(G)^{(\ell)}$ has $($FLC$)$ for every $\ell \ge 1$. 
 \item $S/I(G)^{(\ell)}$ has $($FLC$)$ for some $\ell \ge 1$. 
 \item $\Delta(G)$ is pure. 
\end{enumerate} 
In particular, we cannot remove the condition $d =\dim S/I(G) \ge 3$ 
from the assumption in Theorem \ref{Main-FLC}. 
For example, the pentagon cannot be expressed in the form as in Theorem \ref{Main-FLC}(3). 
\end{remark}

%

\subsection{FLC properties of ordinary powers}

\par 
In the rest of this section, we consider (FLC) properties of ordinary powers. 
Fix a positive integer $\ell$. Let $I=I_{\Delta}$ be a Stanley--Reisner ideal. 
If $S/I^{\ell}$ has (FLC), then 
$(S/I^{\ell})_x$ is Cohen--Macaulay for all vertex $x$. 
Then $I^{(\ell)}/I^{\ell}$ has finite length,  it is equal to 
$H_{\frm}^0(S/I^{\ell})$. 
Then $S/I^{(\ell)}$ also has (FLC). 
Hence we have the following theorem,
which gives an improvement of Goto--Takayama theorem in \cite{GT}
in the case of edge ideals.

\begin{thm} \label{ordinary-flc}
Put $d = \dim S/I(G) \ge 1$. 
Let $\Delta(G)$ denote the complementary simplicial complex of $G$. 
Then the following conditions are equivalent$:$
\begin{enumerate}
 \item $S/I(G)^{\ell}$ has $($FLC$)$ for every $\ell \ge 1$.  
 \item $S/I(G)^{\ell}$ has $($FLC$)$ for some $\ell \ge 3$. 
 \item $S/I(G)^{(\ell)}$ has $($FLC$)$ and $I(G)^{(\ell)}/I(G)^{\ell}$ has finite length 
for some $\ell \ge 3$. 
 \item $\Delta(G)$ is a pure, locally complete intersection complex. 
\end{enumerate}
\end{thm}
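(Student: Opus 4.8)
The plan is to prove the cycle of implications $(1) \Rightarrow (2) \Rightarrow (3) \Rightarrow (4) \Rightarrow (1)$. Of these, $(1) \Rightarrow (2)$ is trivial, and $(4) \Rightarrow (1)$ is precisely the cited structure theorem for locally complete intersection complexes recorded after Theorem \ref{Structure}: since (4) assumes $\Delta(G)$ is \emph{pure} and locally complete intersection, that result gives that $S/I(G)^{\ell}$ has (FLC) for every $\ell \ge 1$. For $(2) \Rightarrow (3)$ I would argue exactly as in the paragraph preceding the statement. If $S/I(G)^{\ell}$ has (FLC), it is Cohen--Macaulay on the punctured spectrum, so for every prime $P \ne \frm$ the localization has no embedded component and hence $(I(G)^{\ell})_P = (I(G)^{(\ell)})_P$; thus $I(G)^{(\ell)}/I(G)^{\ell}$ is supported only at $\frm$, equals $H^0_{\frm}(S/I(G)^{\ell})$, and is of finite length. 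Plugging the short exact sequence $0 \to I(G)^{(\ell)}/I(G)^{\ell} \to S/I(G)^{\ell} \to S/I(G)^{(\ell)} \to 0$ into local cohomology and using that the left-hand term has finite length then transfers (FLC) from $S/I(G)^{\ell}$ to $S/I(G)^{(\ell)}$.

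The heart of the argument is $(3) \Rightarrow (4)$, which I would carry out by localizing at one vertex at a time rather than by invoking Theorem \ref{Main-FLC}; this keeps the proof uniform across the cases $d=1,2$ and $d \ge 3$. First, (FLC) of $S/I(G)^{(\ell)}$ forces $S/I(G)$, hence $\Delta=\Delta(G)$, to be pure, and it makes $S/I(G)^{(\ell)}$ Cohen--Macaulay on the punctured spectrum. Fixing a vertex $x$ and inverting it, $S_x/(I(G)^{(\ell)})_x = S_x/(I(G)_x)^{(\ell)}$ is Cohen--Macaulay; writing $I(G)_x$ as the linear forms coming from the vertices adjacent to $x$ together with the edge ideal of $G'' := G\,|_{W_x}$, where $W_x$ is the set of vertices distinct from $x$ and not adjacent to $x$, Theorem \ref{Main-CM} applies to $I(G'')$ and shows that $G''$ is a disjoint union of complete graphs. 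This is exactly the local structure already recorded in the discussion before Lemma \ref{Graph}.

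Next I would use the finite-length hypothesis to prune those complete graphs down to edges and isolated points. Since $I(G)^{(\ell)}/I(G)^{\ell}$ has finite length, inverting $x$ annihilates it, so $(I(G)_x)^{(\ell)} = (I(G)_x)^{\ell}$; after discarding the regular sequence of linear forms this reads $I(G'')^{(\ell)} = I(G'')^{\ell}$. By Lemma \ref{SVV}, applied with $t=\ell \ge 3 \ge 2$ (whose case $s=2$ already forbids $3$-cycles), $G''$ contains no triangle. Combined with the previous step, every connected component of $G''$ is a complete graph without a triangle, i.e. an edge or an isolated vertex; hence $I(G'') = I(\link_{\Delta}\{x\})$ is a complete intersection. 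As $x$ was arbitrary, $\Delta$ is locally complete intersection, and it is pure, which is (4).

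The step I expect to be the main obstacle is the passage from the global finite-length hypothesis to the local identity $(I(G)_x)^{(\ell)}=(I(G)_x)^{\ell}$ and then to a genuinely triangle-free induced subgraph. One must check that localizing at the single variable $x$ (equivalently, at a prime different from $\frm$) really detects the nonvanishing of $I(G)^{(\ell)}/I(G)^{\ell}$, that forming symbolic powers commutes with this localization, and that the linear part coming from the neighbours of $x$ may be stripped off without affecting the comparison of symbolic and ordinary powers. Once these localization bookkeeping points are settled, Lemma \ref{SVV} and Theorem \ref{Main-CM} do the substantive work, and the rest is assembly.
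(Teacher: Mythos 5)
Your proposal is correct, and it uses the same circle of ideas as the paper, but it reorganizes which implication carries the weight. The paper disposes of the theorem in three lines: $(1)\Rightarrow(2)\Leftrightarrow(3)$ is declared clear (via the short exact sequence $0 \to I^{(\ell)}/I^{\ell} \to S/I^{\ell} \to S/I^{(\ell)} \to 0$, exactly as you argue), $(1)\Leftrightarrow(4)$ is the Goto--Takayama theorem, and $(2)\Rightarrow(4)$ is asserted to follow from Theorem \ref{Power-cor} ``by a similar argument as in \cite{GT}'' --- i.e., localize at a vertex $x$, note $(S/I(G)^{\ell})_x$ is Cohen--Macaulay, and apply the global ordinary-power theorem to conclude the link is a complete intersection. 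You instead close the cycle through $(3)\Rightarrow(4)$, splitting hypothesis $(3)$ into its two components: the FLC of the symbolic power gives, after localizing at $x$ and stripping the linear forms coming from neighbours, that $\link_{\Delta}\{x\}$ corresponds to a disjoint union of complete graphs via Theorem \ref{Main-CM}, while the finite colength gives $I(G'')^{(\ell)}=I(G'')^{\ell}$ locally, so Lemma \ref{SVV} (the $s=2$ case) excludes triangles and prunes the components to edges and isolated vertices. Since Theorem \ref{Power-cor} is itself proved from Theorem \ref{Main-CM} plus Corollary \ref{power-not} (which rests on the same triangle obstruction), your argument is the paper's argument with Theorem \ref{Power-cor} inlined at the local level; what it buys is that the vague citation ``similar argument as in \cite{GT}'' becomes explicit, and the proof is visibly uniform in $d=1,2,3,\ldots$ rather than appearing to depend on Theorem \ref{Main-FLC}, which requires $d\ge 3$ (the paper's proof also avoids that theorem, but less transparently). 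The localization bookkeeping you flag does all check out: symbolic powers commute with inverting $x$; a finite-length module is killed by inverting $x \in \frm$; and the decomposition in the proof of Corollary \ref{Main1-extend}, namely $(J+(y))^{(\ell)}=\sum_{k=0}^{\ell} J^{(k)}y^{\ell-k}$, holds for an arbitrary squarefree monomial ideal $J$ (its minimal primes being $(P_j,y)$), which simultaneously justifies transferring Cohen--Macaulayness to $K[W_x]/I(G'')^{(\ell)}$ (as the $y$-graded summands of a Cohen--Macaulay module of constant dimension) and extracting $I(G'')^{(\ell)}=I(G'')^{\ell}$ as the $y$-degree-zero part.
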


\begin{proof}
$(1) \Rightarrow (2) \Leftrightarrow (3)$ is clear. 
The equivalence of (1) and (4) follows from \cite{GT}. 
On the other hand, $(2) \Rightarrow (4)$ follows from 
Theorem \ref{Power-cor} by a similar argument as in \cite{GT}. 
\end{proof}

\begin{remark}
By Theorem \ref{Structure}, $(4)$ can be rephrased as follows:
\begin{enumerate}
\item[(4)']
When $d=2$, $\Delta(G)$ is a disjoint union of finitely many paths and $n$-gons with $n \ge 4$.  
When $d \ge 3$, $\Delta(G)$ is a disjoint union of finitely many complete intersection complexes of 
dimension $d-1$.  
\end{enumerate}
\end{remark}

\par 
The next example shows that there exists a graph $G$ for which 
$S/I(G)^{\ell}$ has (FLC) but \textit{not} Cohen--Macaulay. 

\begin{exam} \label{bipartite}
Under the notation as in Theorem $\ref{Main-FLC}$, 
$\Delta$ is locally complete intersection if and only if 
$\min\{n_{i1},\ldots,n_{id}\} \le 2$. 
\par 
For instance, for any positive integer $d$, 
the edge ideal of the 
complete bipartite graph $K_{d,d}$
\[
 I=(x_iy_j \,:\, 1 \le i,j \le d) \subseteq S=K[x_1,\ldots,x_d,y_1,\ldots,y_d] 
\]
satisfies the following statements$:$ 
\begin{enumerate}
 \item $S/I^{\ell}$ has (FLC) of dimension $d$ for every $\ell \ge 1$. 
 \item When $d \ge 2$, $S/I^{\ell}$ is \textit{not} Cohen--Macaulay for all $\ell \ge 1$.   
\end{enumerate}
\end{exam}

\begin{proof}
By \cite{SVV}, we know that $I^{(\ell)} = I^{\ell}$ for every $\ell \ge 1$; see also Lemma \ref{SVV}. 
Hence our theorem says that 
$S/I^{\ell}$ has $(FLC)$ for all $\ell \ge 1$. 
On the other hand, as $S/I$ is not Cohen--Macaulay,  
$S/I^{\ell}$ is \textit{not} Cohen--Macaulay if $d \ge 2$ and $\ell \ge 1$. 
\end{proof}

\par 
Even if $S/I(G)$ is Cohen--Macaulay, one can find an example of $G$ such that 
$S/I(G)^{\ell}$ has (FLC) but not Cohen--Macaulay.  

\begin{exam} \label{4-pointed}
Let $\Delta$ be a $4$-pointed path, and $I_{\Delta}=(x_1x_2,x_2x_3,x_3x_4)$. 
Then $I_{\Delta}$ is also the edge ideal of the $4$-pointed path $G$. 
Then $S/I_{\Delta}$ is Cohen--Macaulay, and $S/I_{\Delta}^{2}$ is 
Buchsbaum (thus (FLC)) but \textit{not} 
Cohen--Macaulay.     
\par 
Similarly, for the pentagon $G$, $S/I(G)$ is Cohen--Macaulay and $S/I(G)^{3}$ 
has (FLC) but not Cohen--Macaulay.  
\end{exam}

\par \vspace{2mm}
In general, even if $S/I(G)^{(\ell)}$ has (FLC), it is not necessarily $S/I(G)^{\ell}$ has (FLC)
as the next example shows. 
Note that we can construct similar examples of graphs $G$ with $\dim S/I(G)=d$
for every $d \ge 3$.  

\begin{exam} \label{doubledelta}
Let $S=K[\{x_i\}_{1 \le i \le 9},\{y_j\}_{1 \le j \le 9}]$, and
let $G$ be a graph such that $\Delta(G) = \Delta_{3,3,3} \coprod \Delta_{3,3,3}$. 
Set   
\begin{eqnarray*}
I(G) &=& (x_1x_2,x_1x_3,x_2x_3,x_4x_5,x_4x_6,x_5x_6,x_7x_8,x_7x_9,x_8x_9) \\ 
   && + (y_1y_2,y_1y_3,y_2y_3,y_4y_5,y_4y_6,y_5y_6,y_7y_8,y_7y_9,y_8y_9) \\
   && + (x_iy_j \,:\, 1 \le i,j \le 9). 
\end{eqnarray*}
Then 
\begin{enumerate}
\item $\dim S/I(G)=3$. 
\item $S/I(G)^{(\ell)}$ has (FLC) for every $\ell \ge 1$. 
\item $\Delta(G)$ is not a locally complete intersection complex.  
\item $S/I(G)^{\ell}$ does not have (FLC) for every $\ell \ge 3$. 
\end{enumerate}
\end{exam}

\begin{flushleft}
\begin{tabular}{ccccc}
CI & & Ex.\ref{bipartite} ($K_{d,d}$) & & $4$-pointed path \\[1mm]
\framebox{\bf \large $S/I^{\ell}$ : CM} & $\Longrightarrow$ & 
\framebox{\bf \large $S/I^{\ell}$ : (FLC)} &  $\Longrightarrow$ & 
\framebox{\bf \large $I$ : pure, LCI} \\[3mm]
$\Downarrow$ & & $\Downarrow$  & & $\Downarrow$  \\[2mm]
\framebox{\bf \large $S/I^{(\ell)}$ : CM} & $\Longrightarrow$ & 
\framebox{\bf \large $S/I^{(\ell)}$ : (FLC)} & $\Longrightarrow$  
& \framebox{\bf \large $S/I$ : Buchsbaum} \\[2mm]
Ex.\ref{union-triangles} ($\Delta_{3,3}$) & & Ex.\ref{doubledelta} ($\Delta_{3,3,3} \coprod \Delta_{3,3,3}$) & & 
\end{tabular}
\end{flushleft}


\par \vspace{2mm}
\begin{acknowledgement}
The second author was supported by JSPS 20540047. 
The third author was supported by JSPS 19340005. 
The authors would like to express their gratitude to the referee for
his careful reading. 
\end{acknowledgement}


\end{document}